\numberwithin{figure}{section}
\newcommand\xxrightarrow[2][]{\mathrel{%
  \setbox2=\hbox{\stackon{\scriptstyle#1}{\scriptstyle#2}}%
  \stackunder[1.5pt]{%
    \xrightarrow{\makebox[\dimexpr\wd2\relax]{$\scriptstyle#2$}}%
  }{%
   \scriptstyle#1\,%
  }%
}}
\newcommand\dig[1]{\scalerel*[6.5pt]{\big#1}{%
  \ensurestackMath{\addstackgap[2pt]{\big#1}}}}
\newcommand\digl[1]{\hspace{0.01cm}\mathopen{\dig{#1}}\hspace{0.02cm}}
\newcommand\digr[1]{\hspace{0.03cm}\mathclose{\dig{#1}}}
\newcommand\rig[1]{\scalerel*[6pt]{\big#1}{%
  \ensurestackMath{\addstackgap[3pt]{\big#1}}}}
\newcommand\rigl[1]{\hspace{0.00cm}\mathopen{\rig{#1}}\hspace{0.05cm}}
\newcommand\rigr[1]{\hspace{0.02cm}\mathclose{\rig{#1}}}
\newcommand\sig[1]{\scalerel*[5.5pt]{\big#1}{%
  \ensurestackMath{\addstackgap[1pt]{\big#1}}}}
\newcommand\sigl[1]{\hspace{0.01cm}\mathopen{\sig{#1}}\hspace{0.00cm}}
\newcommand\sigr[1]{\hspace{0.02cm}\mathclose{\sig{#1}}}
\newcommand\pig[1]{\scalerel*[5pt]{\normalsize#1}{%
  \ensurestackMath{\addstackgap[0.5pt]{\normalsize#1}}}}
\newcommand\pigl[1]{\hspace{0.02cm}\mathopen{\pig{#1}}\hspace{0.02cm}}
\newcommand\pigr[1]{\hspace{0.02cm}\mathclose{\pig{#1}}}
\newcommand\gigl[1]{\hspace{0.02cm}\mathopen{\pig{#1}}\hspace{0.01cm}}
\newcommand\gigr[1]{\hspace{0.03cm}\mathclose{\pig{#1}}\hspace{0.01cm}}
\newtheorem{theorem}{Theorem}[section]
\newtheorem{lemma}[theorem]{Lemma}
\newtheorem{conjecture}[theorem]{Conjecture}
\newtheorem{question}[theorem]{Question}
\newtheorem{problem}[theorem]{Problem}
\theoremstyle{definition}
\newtheorem{defn}[theorem]{Definition}
\theoremstyle{remark}
\newcommand\N{\mathbb{N}}
\newcommand\R{\mathbb{R}}
\newcommand\F{\mathbb{F}}
\newcommand\cA{\mathcal{A}}
\newcommand\cC{\mathcal{C}}
\newcommand\cG{\mathcal{G}}
\newcommand\cH{\mathcal{H}}
\newcommand\cI{\mathcal{I}}
\newcommand\cL{\mathcal{L}}
\newcommand\cT{\mathcal{T}}
\def\Pr{\mathbb{P}}
\newcommand\Ex{\mathbb{E}}
	\def\1{\mathbbm{1}}
\newcommand\eps{\varepsilon}
\renewcommand{\geq}{\geqslant}
\renewcommand{\le}{\leqslant}
\renewcommand{\ge}{\geqslant}
\renewcommand{\to}{\rightarrow}
	\def\<{\langle}
	\def\>{\rangle}
\begin{document}

\title{Some recent results in Ramsey theory}

\author{Robert Morris}
\address{IMPA, Estrada Dona Castorina 110, Jardim Bot\^anico, Rio de Janeiro, 22460-320, Brazil}
\email{rob@impa.br}

\thanks{The author is partially supported by CNPq (Proc.~303681/2020-9 and Proc.~407970/2023-1) and by FAPERJ (Proc.~E-26/200.977/2021)}

\begin{abstract}
The purpose of this survey is to provide a gentle introduction to several recent breakthroughs in graph Ramsey theory. In particular, we will outline the proofs (due to various groups of authors) of exponential improvements to the diagonal, near-diagonal, and multicolour Ramsey numbers, improved lower bounds on $R(3,k)$ and $R(4,k)$, and an exponential upper bound on the induced Ramsey numbers. 
\end{abstract}

\maketitle

\section{Introduction}

The Ramsey number $R(k)$ is the smallest $n \in \N$ such that every red-blue colouring of the edges of $K_n$, the complete graph with $n$ vertices, contains a monochromatic copy of $K_k$. These numbers exist by the famous theorem of Ramsey~\cite{R30}, and the bounds
\begin{equation}\label{eq:Rk:ESz:bound}
2^{k/2} \le R(k) \le 4^k
\end{equation}
were proved by Erd\H{o}s and Szekeres~\cite{ESz35} and by Erd\H{o}s~\cite{E47}, whose stunning non-constructive proof of the lower bound initiated the development of the probabilistic method (see~\cite{AS}). 

Over the almost 80 years since these two bounds were proved, the problem of improving either developed into one of the most notorious open questions in combinatorics. Part of the fascination with this problem within the community lies in the fact that it exposes a serious gap in our understanding of `random-like' (or \emph{pseudorandom}) graphs and colourings. 

The study of such colourings has led to some (super-polynomial, but sub-exponential) improvements~\cite{C09,GR,T88,Sah} over the upper bound in~\eqref{eq:Rk:ESz:bound}, as well as to the development of many powerful tools, with a vast array of applications in combinatorics and theoretical computer science (see, e.g.,~\cite{KS}). However, the following theorem, providing an exponential improvement over the upper bound of Erd\H{o}s and Szekeres, was finally proved only a couple of years ago, by Campos, Griffiths, Morris and Sahasrabudhe~\cite{CGMS}.

\begin{theorem}
\label{thm:diagonal}
There exists $\eps > 0$ such that 
$$R(k) \le (4 - \eps)^k$$ 
for all sufficiently large $k \in \N$. 
\end{theorem}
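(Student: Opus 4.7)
My plan is to improve on the classical Erd\H{o}s--Szekeres induction by adopting a richer inductive structure based on books. A \emph{red book} $B_{s,t}$ consists of a red clique on $s$ vertices (the spine) together with $t$ additional vertices (the pages), each joined in red to every spine vertex. The key point is that a red book $B_{s,r(k-s,k)}$ automatically contains a red $K_k$ or a blue $K_k$, where $r(\cdot,\cdot)$ denotes the usual off-diagonal Ramsey number: if the pages span a graph with no red $K_{k-s}$ then they contain a blue $K_k$. It therefore suffices to show that any red/blue colouring of $K_N$, with $N=(4-\eps)^k$, contains either a blue $K_k$ or a sufficiently large red book.

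I would find such a book via a greedy \emph{book-building algorithm}. Maintain a spine $X$ (initially empty) and a candidate page set $Y$ (initially $V(K_N)$). At each stage, examine the distribution of red neighbourhood densities of vertices of $Y$ inside $Y$. If some $v \in Y$ has red density at least a threshold $p$, take a \emph{book step}: add $v$ to $X$ and replace $Y$ by $Y\cap N_R(v)$. A pure sequence of book steps at the threshold $p=1/2$ loses a factor of exactly $2$ per step and only reproduces the classical $4^k$ bound, so to beat this one must force the average multiplicative loss strictly below $2$.

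The new ingredient is a \emph{density-boost step}. When the red densities inside $Y$ cluster closely around $1/2$ (so that no vertex offers a good book step at threshold $p>1/2$), passing from $Y$ to a carefully chosen blue common neighbourhood of a small sample of vertices should produce a set $Y'\subseteq Y$ on which the red densities are biased upward. Subsequent book steps can then be executed at a strictly larger threshold, saving a multiplicative factor less than $2$. Alternating book and density-boost steps according to a tuned schedule, and tracking $\log|Y|$, $|X|$ and the residual target sizes for the blue $K_k$ simultaneously, should force the algorithm to terminate within the allowed budget with either a blue $K_k$ or a red book of the required size.

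The main obstacle is the quantitative analysis of the density-boost step. One must prove that the upward shift in density is large enough, and happens sufficiently often, to give an absolute $\eps>0$ improvement over the Erd\H{o}s--Szekeres bound; this requires a pseudorandomness dichotomy for the coloured graph on $Y$ and a delicate optimisation of the thresholds and step sizes used by the algorithm. Turning local gains per step into the genuinely exponential saving $(4-\eps)^k$, rather than merely $4^k/\mathrm{poly}(k)$ as in earlier sub-exponential improvements, is the part of the argument that eluded researchers for decades.
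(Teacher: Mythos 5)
You have correctly identified the architectural skeleton of the Campos--Griffiths--Morris--Sahasrabudhe proof: reduce to finding a monochromatic book $(A,Y)$ with $|A| = t$, $|Y| \ge R(k-t,k)$, and build it greedily, using ``density boost'' steps to beat the factor of $2$ lost per Erd\H{o}s--Szekeres step. You also correctly locate the difficulty. But the proposed density-boost mechanism has a genuine structural gap.

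Your algorithm maintains only a spine $X$ and a single candidate page set $Y$, and measures red-neighbourhood densities of vertices of $Y$ \emph{inside} $Y$. The actual proof maintains a separate ``observer'' set (the paper's $X$, distinct from the cliques $A,B$ being built), from which vertices are selected, and tracks the density of the bipartite colouring \emph{between} that reservoir and $Y$. In the Gupta--Ndiaye--Norin--Wei formulation (Section~\ref{closer:sec}), after choosing $x$ from the reservoir one performs either $X\to N_R(x)\cap X$, $Y\to N_B(x)\cap Y$ (add $x$ to the red clique) or $X\to N_B(x)\cap X$, $Y\to N_B(x)\cap Y$ (add $x$ to the blue clique), so that $Y$ is always cut to the \emph{blue} neighbourhood. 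The density boost then falls out of a convexity dichotomy: if the blue density between $N_B(x)\cap X$ and $N_B(x)\cap Y$ has dropped, then the blue density between $N_R(x)\cap X$ and $N_B(x)\cap Y$ has necessarily risen, and this gain pays for the size lost. Within a single set $Y$ there is no analogous ``one side pays the other'' mechanism; the suggestion that passing to a blue common neighbourhood of a small sample will bias red densities upward is not justified, and fails exactly in the pseudorandom regime you need to handle, where all local densities sit near $1/2$.

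The second missing ingredient is the lemma that makes the bookkeeping close. In the shorter proof of Section~\ref{diag:sec} this is the geometric dichotomy of Lemma~\ref{lem:lambda}: encoding each vertex's coloured neighbourhoods of the $Y_i$ as vectors, either a constant fraction of pairs has inner product $\ge -1$ in every colour (a colour step is available at acceptable cost), or some colour exhibits a large positive-correlation cluster, which supplies a quantitatively sufficient boost. The proof rests on an auxiliary function $g$ built from $\cosh\sqrt{x}$ whose Taylor coefficients are nonnegative, forcing $\Ex[g(\cdot)]\ge 0$. Without such a dichotomy, the argument, as you yourself note, yields only sub-exponential savings over $4^k$, which is precisely the wall that stood for eighty years.
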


The value of $\eps$ obtained in~\cite{CGMS} was quite small, but the approach was later streamlined and optimised by Gupta, Ndiaye, Norin and Wei~\cite{GNNW}, giving $\eps \approx 1/5$. In Section~\ref{diag:sec} we will outline a (significantly shorter) proof of Theorem~\ref{thm:diagonal} that was discovered more recently by the authors of~\cite{CGMS}, together with Balister, Bollobás, Hurley and Tiba~\cite{BBCGHMST}.


\subsection{Off-diagonal Ramsey numbers}

The upper bound proved by Erd\H{o}s and Szekeres is actually slightly stronger (by a factor of roughly $\sqrt{k}$) than the one stated in~\eqref{eq:Rk:ESz:bound}. It follows from a simple induction argument, which requires the introduction of the following more general definition. The Ramsey number $R(\ell,k)$ is the smallest $n \in \N$ such that every red-blue colouring of the edges of $K_n$ contains either a red copy of $K_\ell$ or a blue copy of $K_k$. Erd\H{o}s and Szekeres~\cite{ESz35} proved that 
\begin{equation}\label{eq:ESz:bound}
R(\ell,k) \le {k + \ell - 2 \choose \ell - 1}
\end{equation}
for all $\ell,k \in \N$. In particular, setting $\ell = k$ gives $R(k) \le {2k - 2 \choose k-1} \approx \frac{1}{\sqrt{k}} \cdot 4^k$.

Given the difficulty of improving the bounds on the `diagonal' Ramsey numbers $R(k)$, attention partly shifted to understanding the `off-diagonal' Ramsey numbers $R(\ell,k)$, where $\ell$ is fixed and $k \to \infty$. Note that $R(1,k) = 1$ and $R(2,k) = k$, so the bound~\eqref{eq:ESz:bound} is tight in these trivial cases. The first non-trivial case is therefore $R(3,k)$, which turns out to be \emph{much} more interesting, and has been the subject of a huge amount of research over the past 90 years (see~\cite{Sp}). Following groundbreaking work of Erd\H{o}s~\cite{E57,E59,E61} in the 1950s and 1960s, $R(3,k)$ was determined up to a constant factor by Ajtai, Koml\'os and Szemer\'edi~\cite{AKSz81} in 1981, and Kim~\cite{Kim} in 1995. Since then numerous alternative proofs and generalisations 
of both the upper~\cite{AKSz,A96,CJMS23,DJPR,DJM,Sh83} and the lower~\cite{Boh,BK,CJMS25,FGM,HHKP} bound have been discovered; in fact, this year alone has seen two significant breakthroughs on the lower bound, in~\cite{CJMS25} and~\cite{HHKP}. 
As a result of this, the best known bounds on $R(3,k)$ now differ by only a factor of $2 + o(1)$; the upper bound in Theorem~\ref{thm:R3k} was proved by Shearer~\cite{Sh83} in 1983, and the lower bound very recently by Hefty, Horn, King and Pfender~\cite{HHKP}.

\begin{theorem}\label{thm:R3k}
$$\bigg( \frac{1}{2} + o(1) \bigg) \frac{k^2}{\log k} \le R(3,k) \le \big( 1 + o(1) \big) \frac{k^2}{\log k}$$ 
as $k \to \infty$. 
\end{theorem}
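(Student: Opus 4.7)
For the upper bound, I would follow the approach of Shearer via his classical bound on the independence number of triangle-free graphs: for any triangle-free graph $G$ on $n$ vertices with degree sequence $(d_v)_{v \in V}$,
$$\alpha(G) \;\ge\; \sum_{v \in V} f(d_v), \qquad \text{where} \quad f(d) = \frac{d \log d - d + 1}{(d - 1)^2} \sim \frac{\log d}{d}.$$
This is proved by analysing a natural random greedy independent-set process on $G$, in which one orders the vertices randomly and adds each vertex whose neighbourhood is disjoint from the current independent set. To deduce the Ramsey bound, suppose there is a red-blue colouring of $K_n$ with no red triangle and no blue $K_k$, and let $G$ be the red graph. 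Then $G$ is triangle-free, and for every vertex $v$ the red neighbourhood $N_G(v)$ induces a blue clique (any red edge inside $N_G(v)$ would form a triangle with $v$), so $d_v = |N_G(v)| \le k-1$. Since $f$ is decreasing, summing Shearer's bound over $v$ yields $\alpha(G) \ge n \cdot f(k-1) = (1 - o(1))\frac{n \log k}{k}$. Combined with the hypothesis $\alpha(G) < k$, this forces $n \le (1 + o(1))\, k^2/\log k$.

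For the lower bound, I would employ the triangle-free process: starting from the empty graph on $[n]$, repeatedly add a uniformly random edge from the pool of currently ``open'' pairs (those whose inclusion would not create a triangle), continuing until the graph is maximal triangle-free. Following the differential equation method, and building on the work of Bohman~\cite{Boh}, Bohman--Keevash~\cite{BK}, and Fiz Pontiveros--Griffiths--Morris~\cite{FGM}, one tracks the evolution of the edge density, the codegrees, and various higher-order pseudorandom statistics, and shows that with high probability the resulting graph $G^*$ satisfies $\alpha(G^*) \le (\sqrt{2} + o(1))\sqrt{n \log n}$. Choosing $k \approx \sqrt{2n \log n}$ then recovers the classical bound $R(3,k) \ge (1/4 - o(1))\, k^2/\log k$. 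To reach the improved constant $1/2$ recently obtained by Hefty, Horn, King and Pfender~\cite{HHKP}, I would then incorporate their refinement of the independence-number analysis (or of the construction itself), which essentially halves the worst-case upper bound on $\alpha(G^*)$.

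The main obstacle is the delicate probabilistic analysis of the triangle-free process. One must simultaneously control a large family of random variables --- the number of open pairs, degree and codegree statistics, and various higher-order parameters --- throughout the $\Theta(n^{3/2}\sqrt{\log n})$ steps of the process, using self-correcting trajectories together with Freedman-type martingale concentration to absorb accumulating deviations. The most subtle point is controlling the independence number itself, which requires union-bounding over an exponential family of candidate sets and showing that each one is ``killed'' (by the arrival of an edge inside it) with high probability before the process terminates. Sharpening this analysis to obtain the factor of two improvement in~\cite{HHKP} is, to my mind, where most of the new work resides.
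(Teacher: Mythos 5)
Your upper bound argument matches the paper's: you apply Shearer's theorem to the red graph, use $\Delta(G)\le k-1$ to conclude $\alpha(G)\ge(1+o(1))\,n\log k/k$, and deduce $n\le(1+o(1))\,k^2/\log k$. The formulation $\alpha(G)\ge\sum_v f(d_v)$ is Shearer's original, equivalent (by convexity of $f$) to the average-degree version in the paper, and the inductive proof (delete a random closed neighbourhood) is the same greedy process you describe.

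The lower bound proposal, however, has a genuine gap. You propose to run the triangle-free process, which yields $R(3,k)\ge(1/4+o(1))\,k^2/\log k$, and then to ``incorporate [HHKP's] refinement of the independence-number analysis'' to halve the bound on $\alpha(G^*)$. But the factor of $4$ is not slack in the analysis; it is a structural fact about the triangle-free process itself. The output $G$ of the process satisfies $d(G)=\big(\tfrac{1}{\sqrt2}+o(1)\big)\sqrt{n\log n}$ and $\alpha(G)=\big(\sqrt2+o(1)\big)\sqrt{n\log n}$, so $\alpha(G)\approx 2\,d(G)$, and these constants are asymptotically sharp for the process, as established when it was tracked to its end in~\cite{BK,FGM}. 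Indeed, the authors of~\cite{FGM} even conjectured that no triangle-free graph with $\alpha(G)\sim\alpha\big(G(n,p)\big)$ could be denser than this. The constant $1/2$ of Hefty, Horn, King and Pfender~\cite{HHKP} comes from a genuinely different construction, not from sharpening the process analysis: take two independent copies of $G(n/s,p)$ with $s=(\log n)^2$ and $p=\sqrt{\log n/4n}$, remove an edge from each triangle, blow each up by a factor of $s$, superimpose the two blow-ups via a uniformly random bijection (this is where the Alon--R\"odl idea of Section~\ref{AlonRodl:sec} controls the number of surviving large independent sets), and finally remove edges from the cross-triangles of the union, exploiting the crucial observation that each blow-up edge lies in roughly $s$ of these triangles, so deleting it destroys many at once. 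This yields a triangle-free graph with $d(G)\sim\alpha(G)\sim\sqrt{n\log n}$ and hence $R(3,k)\ge(1/2+o(1))\,k^2/\log k$. No amount of refinement of the triangle-free process can reach this constant; you would need to replace the construction, not tighten the analysis.
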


In Sections~\ref{sec:R3k:upper} and~\ref{sec:R3k:lower} we will outline the proofs of these two bounds. Very roughly speaking, the approach for the upper bound is to choose the vertices of the blue $K_k$ randomly one by one, and for the lower bound the graph of red edges is formed by the union of two blow-ups of the random graph $G(n,p)$, placed randomly on top of one another. 

Given this success, it is natural to hope that similarly strong bounds can be proved for $R(\ell,k)$ for all fixed $\ell$. Surprisingly, however, while the techniques used to prove Theorem~\ref{thm:R3k} can be applied to give bounds on $R(\ell,k)$, when $\ell \ge 4$ these bounds no longer match, and in fact differ by a (quite large) polynomial factor! More precisely, the lower bound techniques can be extended to prove a bound of the form $R(\ell,k) \ge k^{(\ell+1)/2 + o(1)}$, whereas the best known upper bounds only improve the Erd\H{o}s--Szekeres bound~\eqref{eq:ESz:bound} by a polylogarithmic factor.

Determining which of these bounds is closer to the truth is one of the most important open problems in Ramsey theory, and (as shown in~\cite{MuV}) is closely related to the (conjectured) existence of optimally pseudorandom $K_\ell$-free graphs, see Section~\ref{sec:MuV}. The problem is wide open in general, and when $\ell \ge 5$ we do not know how to improve either of the bounds stated above. However, in an exciting recent breakthrough, the case $\ell = 4$ was resolved (up to poly-logarithmic factors) by Mattheus and Verstraete~\cite{MaV}.

\begin{theorem}\label{thm:R4k}
There exist constants $C,c > 0$ such that
$$\frac{ck^3}{(\log k)^4} \le R(4,k) \le \frac{Ck^3}{(\log k)^2}$$ 
for all sufficiently large $k \in \N$. 
\end{theorem}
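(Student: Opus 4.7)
The plan is to prove the upper and lower bounds by quite different methods. For the upper bound, I would iterate a neighbourhood-chasing procedure combined with Shearer's bound for triangle-free graphs, in the spirit of the argument for the upper bound in Theorem~\ref{thm:R3k}. For the lower bound, following Mattheus and Verstraete, the idea is to construct a $K_4$-free graph of small independence number by combining an algebraic pseudorandom structure with a carefully chosen random sparsification.

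For the upper bound, suppose we are given a red/blue colouring of $K_n$ with no red $K_4$, and we wish to find a blue $K_k$. The key structural fact is that for every vertex $v$ the red neighbourhood of $v$ induces a triangle-free graph, since a red triangle on three red neighbours of $v$ would complete a red $K_4$. By Shearer's theorem, any triangle-free graph on $m$ vertices with maximum degree $d$ has an independent set of size $\Omega((m/d)\log d)$. My plan is to run an iterative procedure that maintains a set $U_i \subseteq V(K_n)$ together with a partial blue clique $v_1,\ldots,v_i$, each blue-adjacent to every vertex of $U_i$. At each step one passes to the blue neighbourhood inside $U_i$ of a well-chosen $v_{i+1}$: if the red-degree distribution in $U_i$ is concentrated at a high value, Shearer's bound applied to a single red neighbourhood already produces a blue clique of size $k$, while if the red-degrees are small, then $U_{i+1}$ shrinks only modestly. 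Balancing these two cases should save a factor of $(\log k)^2$ over the trivial $O(k^3)$ bound from Erd\H{o}s--Szekeres, giving $n = Ck^3/(\log k)^2$.

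For the lower bound, the aim is to construct a $K_4$-free graph $G$ on $n \approx c k^3/(\log k)^4$ vertices with $\alpha(G) < k$. Since the standard alteration argument based on $G(n,p)$ reaches only $k^{5/2+o(1)}$, some algebraic input is essential. Following Mattheus--Verstraete, the plan is to begin with a highly pseudorandom bipartite incidence structure between ``points'' and ``blocks'' arising from an algebraic construction such as a Hermitian unital, giving a nearly regular $C_4$-free bipartite graph $H$ with a strong spectral gap. One then builds $G$ on the point set of $H$ by a random sparsification, declaring two points adjacent whenever they lie in a common block from a randomly sampled subfamily, with the sampling density $p$ tuned to give the target order and average degree. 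The $C_4$-freeness of $H$ should force $G$ to be $K_4$-free (any $K_4$ in $G$ would yield a forbidden configuration of blocks), and the pseudorandomness, combined with a union bound, should give $\alpha(G) < k$.

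The hardest part of the plan will be the lower bound, and specifically the simultaneous verification of (a) $K_4$-freeness of $G$, which requires tight control of short configurations in the underlying algebraic graph, and (b) $\alpha(G) < k$ via a delicate union bound over the $\binom{n}{k}$ candidate independent sets, which in turn relies on the expander mixing lemma applied to $H$. These two demands pull the parameter $p$ in opposite directions, so the entire argument rests on a precise optimisation and on the algebraic construction having a tight enough spectral gap to survive the union bound; a purely probabilistic base graph would not suffice, which is exactly why the standard alteration method stalls at $k^{5/2+o(1)}$. By contrast, the upper bound should require only a careful but essentially mechanical optimisation of the iterative procedure sketched above.
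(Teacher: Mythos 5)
Your high-level plan is pointed in the right direction (Shearer-type arguments for the upper bound, an algebraic unital plus a random sparsification for the lower bound), but there are genuine gaps in both halves, and in particular the lower bound as sketched does not produce a $K_4$-free graph.

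For the upper bound, the dichotomy you propose cannot balance to a $(\log k)^2$ saving. If every red neighbourhood in $U_i$ is triangle-free with maximum degree $<k$, Shearer gives an independent set of size $\Omega\big(m\log k/k\big)$ in a red neighbourhood of size $m$, so your ``high red-degree'' case only fires when $m \gtrsim k^2/\log k$. That forces the shrinkage in the complementary case to be as large as $k^2/\log k$ per step, and over $k$ steps you only get $n \lesssim k^3/\log k$, losing a full factor of $\log k$. The AKSz argument is not an Erd\H{o}s--Szekeres-style iteration at all: one shows that no vertex lies in too many red triangles (otherwise some pair $u,v$ has $|N_R(u)\cap N_R(v)| \ge R(2,k)=k$, and this set is red-independent in a $K_4$-free graph), hence the \emph{total} triangle count is small, and then one sparsifies the whole graph with a $p$-random vertex subset and deletes one vertex per surviving triangle before applying Shearer once. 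This global few-triangles step (Lemma~\ref{cor:few:triangles}) is exactly where the second $\log k$ factor comes from, and it is missing from your iterative sketch.

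For the lower bound, the $K_4$-freeness argument is incorrect. The point--block incidence structure of the Hermitian unital is indeed $C_4$-free (two lines meet in at most one point), but the graph in which two lines are adjacent when they share a unital point is a union of $\Theta(q^3)$ edge-disjoint cliques of size $\Theta(q^2)$, and sampling a random subfamily of blocks leaves those cliques intact; any four lines through a sampled unital point already form a $K_4$. The actual mechanism is O'Nan's theorem (Lemma~\ref{lem:Hermitian:unital}$(c)$): every $K_4$ in this graph meets one of the block-cliques in at least three vertices, so replacing each clique by a random complete \emph{bipartite} graph (hence triangle-free) destroys every $K_4$. Relatedly, once you make this replacement the resulting graph $H'$ still has independent sets of size $\Theta(q^2)=\Theta(\sqrt{n})$ (the bipartition classes), so ``pseudorandomness plus a union bound over $\binom{n}{k}$ sets'' cannot give $\alpha<k$ directly. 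What is needed is a count of independent $k$-sets via the graph container method (Lemma~\ref{lem:graph:containers}) together with a supersaturation lemma for $H'$ (Lemma~\ref{lem:R4k:supersaturation}), followed by a random vertex subset of density $q^{-1}$ (not a random subfamily of blocks) to kill the surviving independent sets in expectation.
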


The upper bound in Theorem~\ref{thm:R4k} was proved by Ajtai, Koml\'os and Szemer\'edi~\cite{AKSz,AKSz81} in 1980, who showed more generally that 
\begin{equation}\label{eq:AKSz:fixed:ell}
R(\ell,k) \le \frac{Ck^{\ell-1}}{(\log k)^{\ell-2}}
\end{equation}
for each fixed $\ell \ge 3$ and all sufficiently large $k \in \N$ (see Section~\ref{AKSz:sec}). To prove the lower bound, Mattheus and Verstraete used a certain algebraic object known as the Hermitian unital, which provides a collection of roughly $n^{3/4}$ edge-disjoint cliques of size $\sqrt{n}$, with the property (shown by O’Nan~\cite{ONan} in the 1970s) that every copy of $K_4$ in the union of the cliques intersects one of the cliques in (at least) a triangle. To construct a $K_4$-free graph with no large independent set (that is, the red edges of their colouring), they replace each clique by a (random) complete bipartite graph, and then take a random subset of the vertex set of size roughly $n^{3/4}$. In Section~\ref{sec:R4k} we will provide a more detailed outline of their proof. 

\subsection{Ramsey numbers closer to the diagonal}

In the discussion above we restricted our attention to the two extremes: the case $\ell = k$, and the case $\ell$ fixed and $k \to \infty$. However, the method of Ajtai, Koml\'os and Szemer\'edi~\cite{AKSz} can be extended to improve the Erd\H{o}s--Szekeres bound~\eqref{eq:ESz:bound} for all $\ell \ll \log k$, and that of~\cite{CGMS} can be extended to cover the range $\log k \ll \ell \le k$. Neither method covers\footnote{We use standard probabilistic notation, so $f(n) \ll g(n)$ if and only if $f(n)/g(n) \to 0$ as $n \to \infty$.} the range $\ell = \Theta(\log k)$, but fortunately this gap can be filled using an approach due to R\"odl~\cite{GR}. Combining all of these results, we obtain the following exponential improvement over the bound of Erd\H{o}s and Szekeres~\cite{ESz35}. 

\begin{theorem}\label{thm:off:diagonal}
There exists $\delta > 0$ such that 
$$R(\ell,k) \le e^{-\delta \ell} {k + \ell - 2 \choose \ell - 1}$$
for all sufficiently large $k \in \N$, and every $3 \le \ell \le k$. 
\end{theorem}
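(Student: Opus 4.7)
The plan is to cover the range $3 \le \ell \le k$ by three subintervals and apply a different tool in each: the Ajtai--Koml\'os--Szemer\'edi bound~\eqref{eq:AKSz:fixed:ell} in the small-$\ell$ regime, the book-algorithm approach of~\cite{CGMS, BBCGHMST} underlying Theorem~\ref{thm:diagonal} in the large-$\ell$ regime, and R\"odl's iterative argument~\cite{GR} in the intermediate regime. Concretely, I would fix absolute constants $0 < c_0 < C_0$ (to be chosen at the end), establish the desired bound with savings $\delta_1, \delta_2, \delta_3 > 0$ in the three ranges $3 \le \ell \le c_0 \log k$, $c_0 \log k \le \ell \le C_0 \log k$ and $C_0 \log k \le \ell \le k$ respectively, and set $\delta := \min\{\delta_1, \delta_2, \delta_3\}$.

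For the small-$\ell$ regime I would invoke the AKSz bound~\eqref{eq:AKSz:fixed:ell} in its form uniform in $\ell$, i.e., $R(\ell,k) \le C k^{\ell-1}/(\log k)^{\ell-2}$. Using the Stirling-based asymptotic $\binom{k+\ell-2}{\ell-1} \sim (ek/\ell)^{\ell-1}/\sqrt{2\pi \ell}$ (valid for $\ell \ll k$), the ratio of the AKSz bound to the Erd\H{o}s--Szekeres bound is approximately $\sqrt{\ell}(\ell/(e\log k))^{\ell-1} \log k$, which is bounded by $e^{-\delta_1\ell}$ for an absolute $\delta_1 > 0$ whenever $\ell \le c_0 \log k$ with $c_0$ strictly less than $e$. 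This disposes of the first regime.

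For the large-$\ell$ regime I would adapt the book-algorithm proof of Theorem~\ref{thm:diagonal} from~\cite{BBCGHMST}. That argument naturally produces off-diagonal estimates: at each step one either extends a monochromatic book or extracts a density-boosted subgraph, and tracking the two parameters $\ell$ and $k$ separately throughout the recursion (rather than collapsing to the diagonal) yields an estimate of the form $R(\ell,k) \le e^{-\delta_3 \ell}\binom{k+\ell-2}{\ell-1}$ for some $\delta_3 > 0$. The per-step saving is uniformly bounded below in the regime $\ell \ge C_0 \log k$, provided $C_0$ is taken large enough that the quasirandomness/density-boost dichotomy delivers a positive net gain at each step of the induction.

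The intermediate sliver $c_0 \log k \le \ell \le C_0 \log k$ is handled by R\"odl's doubling argument~\cite{GR}, which amplifies a modest constant-factor improvement over Erd\H{o}s--Szekeres at a smaller base scale into an exponential-in-$\ell$ improvement at the target scale. The base case is supplied by the AKSz bound just below $c_0 \log k$, and because the interval has length $O(\log k) = O(\ell)$, only $O(1)$ doubling steps are needed. The main obstacle throughout is synchronising the three arguments so that a single $\delta > 0$ works uniformly: one must choose $c_0$ and $C_0$ so that each of the three regimes reaches the common exponential rate, and verify that the sub-exponential factors (Stirling corrections, polynomial-in-$\log k$ terms) coming from each method do not swallow the exponential gain near the boundaries between regimes. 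This is essentially a calibration problem, but it is the place where the proof is most likely to become technically delicate.
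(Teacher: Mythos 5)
Your three-range decomposition and the assignment of tools to each range matches the paper's strategy exactly: the Ajtai--Koml\'os--Szemer\'edi bound (Theorem~\ref{thm:AKSz}, Section~\ref{AKSz:sec}) for $\ell$ small compared to $\log k$, R\"odl's method (Theorem~\ref{cor:Rodl}, Section~\ref{Rodl:sec}) to bridge the gap $\ell = \Theta(\log k)$, and the book-algorithm lineage of~\cite{CGMS,GNNW,BBCGHMST} (Theorems~\ref{thm:GNNW} and~\ref{thm:multicolour}) for $\log k \ll \ell \le k$. Your Stirling calibration of the first regime is also sound in spirit, though the paper's uniform form of AKSz (Theorem~\ref{thm:AKSz}) carries an extra constant and only directly covers $\ell \le (\log k)/9$, not $\ell < e\log k$.

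Where you go wrong is in the mechanics of the intermediate step. R\"odl's argument is \emph{not} a doubling argument and does not terminate after $O(1)$ iterations. Rather (see Lemmas~\ref{lem:running:ESz} and~\ref{lem:Rodl:few:bad:seqs}), one iterates the Erd\H{o}s--Szekeres recursion $R(\ell,k)\le R(\ell-1,k)+R(\ell,k-1)$ an unbounded number of times, encodes the branching tree by ``Erd\H{o}s--Szekeres paths'' parametrised by sets $L\in\binom{[k+\ell-2]}{\ell-1}$, and shows by a counting/averaging argument (Lemma~\ref{lem:Rodl:few:bad:seqs}) that all but a $k^{-2c}$-fraction of paths reach a pair $(\ell',k')$ with $\ell'\le c\log k'$ and $k'\ge\sqrt k$, at which point Theorem~\ref{thm:AKSz} is applied. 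The normalised sum in Lemma~\ref{lem:running:ESz} then converts the per-path savings into the global polynomial factor $k^{-c}$, which is $e^{-\delta\ell}$ precisely because $\ell=\Theta(\log k)$ in this range. Your high-level intuition that AKSz supplies the base case and the recursion amplifies it is correct, but a ``doubling in $O(1)$ steps'' scheme would not produce the path-averaged bound that is actually needed, so you would want to replace that step with the path-counting version before attempting to write up the proof.
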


We will outline the proof of Theorem~\ref{thm:off:diagonal} in Sections~\ref{AKSz:sec}--\ref{diag:sec}, 
with each section covering a different range of $\ell$. In particular, in Sections~\ref{AKSz:sec} and~\ref{Rodl:sec} we will discuss the range $\ell = O(\log k)$, in Section~\ref{closer:sec} we will sketch an elegant inductive version of the proof from~\cite{CGMS} for the range $\log k \ll \ell \ll k$, which was discovered by Gupta, Ndiaye, Norin and Wei~\cite{GNNW}, and in Section~\ref{diag:sec} we will outline the new (and much simpler) proof of Theorem~\ref{thm:diagonal} that was given in~\cite{BBCGHMST}. 

There has also been a recent breakthrough in the lower bound for $R(\ell,k)$ in this range, by Ma, Shen and Xie~\cite{MSX}, who used a random geometric graph to improve the bound given by a simple random colouring by an exponential factor. In Section~\ref{MSX:sec} we will describe their colouring, and provide a (very rough) heuristic explanation for why it works.  

\subsection{Induced Ramsey numbers}\label{induced:subsec}

The topic of the final section of this survey is a natural variant of the usual Ramsey numbers for \emph{induced} subgraphs. To define these numbers, let us write $G \xrightarrow{\mathrm{ind}} H$ if every red-blue colouring of the edges of $G$ contains an induced monochromatic copy of $H$ (that is, a copy of $H$ which is induced in $G$, and all the edges have the same colour). We then define
$$R^{\mathrm{ind}}(H) = \min \big\{ v(G): G \xrightarrow{\mathrm{ind}} H \big\}.$$
In particular, note that $R^{\mathrm{ind}}(K_k) = R(K_k)$. It is surprisingly challenging even to prove that these numbers are finite for every graph $H$, and the early proofs of this fact~\cite{D75,EHP,R73} gave bounds that were double-exponential or worse. Nevertheless, Erd\H{o}s~\cite{E75,E84} famously conjectured that $R^{\mathrm{ind}}(H)$ should be at most exponential in the number of vertices of $H$. This conjecture was recently proved by Aragão, Campos, Dahia, Filipe and Marciano~\cite{ACDFM}.

\begin{theorem}\label{thm:induced}
There exists a constant $C > 0$ such that
$$R^{\mathrm{ind}}(H) \le 2^{Ck}$$
for every graph $H$ with $k$ vertices.
\end{theorem}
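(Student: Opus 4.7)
My plan is to take $G$ to be the binomial random graph $G(N, 1/2)$ on $N = 2^{Ck}$ vertices and to show that, with positive probability over $G$, every red/blue colouring of $E(G)$ contains an induced monochromatic copy of $H$. The random graph already contains induced copies of $H$ in abundance: the expected number of labelled copies is $N(N-1)\cdots(N-k+1) \cdot 2^{-\binom{k}{2}}$, which grows like $2^{Ck^2 - k^2/2}$ and is enormous once $C > 1/2$. The difficulty is therefore forcing one of these copies to be monochromatic uniformly over all colourings.

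For a fixed red/blue colouring with red accounting for at least half of $E(G)$, I would try to build a red-monochromatic induced copy of $H$ by embedding the vertices of $H$ greedily into $G$: maintain a candidate set $C_j \subseteq V(G)$ for the image of the $(j+1)$-st vertex, requiring that every $v \in C_j$ is red-adjacent (in $G$) to the images of the already-placed neighbours in $H$ and $G$-non-adjacent to the images of the already-placed non-neighbours. Success amounts to $C_k \neq \emptyset$.

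A naive analysis shrinks $|C_j|$ by a factor of order $4^{-d_j} \cdot 2^{-(j-d_j)}$ per step, where $d_j$ counts the already-placed neighbours of the next vertex in $H$; this produces a total loss of $2^{-\Theta(k^2)}$ and only recovers $R^{\mathrm{ind}}(H) \le 2^{O(k^2)}$. To reach the target $2^{Ck}$, one must initialise the procedure on a strongly pseudorandom host set $U \subseteq V(G)$ in which a dominant colour (say red) has such high quasirandom density that the per-step cost is $2^{-O(1)}$ rather than $2^{-\Theta(k)}$. I would try to build such a $U$ iteratively via a dependent-random-choice / book-finding procedure, in the spirit of the machinery behind Theorem~\ref{thm:diagonal} (discussed in Section~\ref{diag:sec}): at each stage, pass to a common red-neighbourhood of a small carefully chosen seed, preserving both the $G$-quasirandomness and a density lower bound on the red graph.

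The main obstacle lies precisely in controlling this iteration uniformly over all colourings. Highly structured red graphs (e.g.\ bipartite-like configurations) resist straightforward dependent-random-choice extraction, and one cannot union-bound over the $2^{\Theta(N^2)}$ colourings. Handling this likely requires a carefully designed branching argument that switches to the blue colour whenever a red extraction step fails, together with a clean accounting of the "book" sizes at each stage so that the total cost over the $k$ embedding rounds remains $2^{O(k)}$. Getting this bookkeeping to balance --- simultaneously tracking red-density, $G$-quasirandomness, and the adjacency pattern already prescribed by the partial embedding --- is, I expect, the heart of the argument.
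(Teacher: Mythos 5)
You are right that the host graph should be $G(n,1/2)$ with $n = 2^{Ck}$, that a naive one-vertex-at-a-time embedding only gives $2^{O(k^2)}$, and that the central difficulty is the adversary's freedom to choose the colouring after seeing all of $G$, so that a union bound over the $r^{\Theta(n^2)}$ colourings of $E(G)$ is hopeless. But the mechanism you propose to overcome this --- iterated dependent random choice / book-finding in the spirit of the diagonal Ramsey proof, with a branching rule between colours --- is not the route the paper takes, and I do not see how to make it work. The book machinery of Section~\ref{diag:sec} exploits the transitivity of $K_k$ (you only need to control one colour's neighbourhoods), whereas for general $H$ you must simultaneously prescribe colour-adjacency \emph{and} $G$-non-adjacency at each step, and no amount of density boosting gives you control over the non-edges. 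More importantly, you still have no way to beat the union bound: even if you could always extract a pseudorandom red-dense host $U$, that argument is per-colouring, and you would need a uniform statement over all colourings.

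The paper's actual strategy is structurally different and turns on an idea you have not identified. One does induction on $k = v(H)$, not a greedy embedding. Fix a set $U$ of size $\delta n$, reveal only the edges of $G$ \emph{inside} $U$, and take a union bound over the $\sim r^{|U|^2} = r^{\delta^2 n^2}$ colourings of those edges alone; the edges between $U$ and $V(G)\setminus U$ remain random. By induction, each such colouring of $G[U]$ contains not just one but a large, ``well-distributed'' family of monochromatic induced copies of $H - v$ in each colour (this is quantified via the $(p,R)$-Janson condition of Definition after Theorem~\ref{thm:induced:multi:Gnp}). The remaining task is to show that, with failure probability smaller than $r^{-\delta^2 n^2}$, no colouring of the edges between $U$ and $V(G)\setminus U$ can avoid extending one of these copies to a monochromatic induced copy of $H$; this is where the hypergraph container method --- specifically the efficient container lemma of Campos and Samotij (Lemma~\ref{lem:CS}) --- does the work, converting the global ``well-distributed'' hypothesis into per-container local statements whose failure probabilities can be summed. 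A single copy of $H-v$, or even many copies that all cluster on a small vertex set, would not give a small enough failure probability; the Janson condition is exactly what rules this out, and your proposal has no analogue of it. So the gap is not in the bookkeeping but in the architecture: you need the reveal-inside-$U$-then-union-bound framework, the strengthened induction hypothesis producing a Janson family of copies, and the container machinery, none of which appear in your sketch.
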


We will outline the (extremely intricate) proof of Theorem~\ref{thm:induced} in Section~\ref{induced:sec}. The basic idea is to show that if $n \ge 2^{Ck}$, then the random graph $G(n,1/2)$ is a suitable choice for every graph $H$ with $k$ vertices; that is, we have
$$G(n,1/2) \xrightarrow{\mathrm{ind}} H$$
with (very) high probability. 
To do so, the authors reveal the edges of $G \sim G(n,1/2)$ inside a set $U$ of size $\delta n$, take a union bound over all choices of the colouring inside this set, and apply induction on $k$ to find a large and `well-distributed' collection of monochromatic induced copies of $H' = H - v$ inside $U$. Their main task is then to prove a suitably strong\footnote{Note that there are roughly $2^{|U|^2}$ choices for the colouring inside $U$, so their bound on the failure probability needs to be smaller than $2^{-\delta^2 n^2}$, which is not far from the trivial lower bound of $2^{-\delta n^2}$.} bound on the probability that there exists a colouring of the edges between $U$ and $V(G) \setminus U$ that does not extend these copies of $H'$ (which can now be considered to be fixed) to a large well-distributed collection of monochromatic induced copies of $H$. 

A key tool in this part of the proof is an exciting new variant of the method of hypergraph containers (see~\cite{BMS,ST}, or~\cite{ICM} for a gentle introduction to the method) which was discovered recently by Campos and Samotij~\cite{CS}. Roughly speaking, the authors show how this new tool can be used to reduce the study of `global' properties (such as being well-distributed) to `local' properties (which traditional container theorems are better-equipped to handle). It seems likely that this new method will have many further applications.

\subsection{Multicolour Ramsey numbers, and many other directions}

For simplicity, we have focused in this introduction on colourings with only two colours; in the sections below we will also discuss the more general setting of $r$-colourings, where many  beautiful problems remain open. We would also like to emphasize that in this survey we will only have space to discuss a few of the most recent advances in the area; for a much broader view of the development of graph Ramsey theory over the past few decades, and many further results and open problems, we recommend the excellent survey by Conlon, Fox and Sudakov~\cite{CFS}. 

The rest of this survey is organised as follows: in Sections~\ref{sec:R3k:upper} and~\ref{sec:R3k:lower} we will study the off-diagonal Ramsey numbers $R(3,k)$, and sketch the proof of Theorem~\ref{thm:R3k}; in Section~\ref{sec:R4k} we will sketch the proof of the Mattheus--Verstraete lower bound on $R(4,k)$; in Sections~\ref{AKSz:sec}--\ref{diag:sec} we will study bounds on $R(\ell,k)$ when $\ell \to \infty$, and outline the proof of Theorems~\ref{thm:diagonal} and~\ref{thm:off:diagonal}; and finally, in Section~\ref{induced:sec}, we will 
sketch the proof of Theorem~\ref{thm:induced}.

\section{Upper bounds on $R(3,k)$}\label{sec:R3k:upper}

We will begin fairly gently, by recalling a classical upper bound on $R(3,k)$, and some of the various known proofs. First, however, let us prove the Erd\H{o}s--Szekeres bound~\eqref{eq:ESz:bound}. 

\begin{theorem}[Erd\H{o}s and Szekeres, 1935]\label{thm:ESz:bound}
For every $\ell,k \in \N$, 
$$R(\ell,k) \le {k + \ell - 2 \choose \ell - 1}.$$ 
\end{theorem}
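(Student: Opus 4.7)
The plan is a standard double induction on $\ell + k$, relying on the recursive inequality
$$R(\ell,k) \le R(\ell-1,k) + R(\ell,k-1)$$
together with Pascal's identity. The base cases are $\ell = 1$ or $k = 1$, where $R(1,k) = R(\ell,1) = 1$ and the corresponding binomial coefficient $\binom{k-1}{0} = \binom{\ell-1}{\ell-1} = 1$ matches, so there is nothing to check.

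For the recursion, I would set $n = R(\ell-1,k) + R(\ell,k-1)$, fix an arbitrary red-blue colouring of $E(K_n)$, and pick any vertex $v$. Partition the remaining $n-1$ vertices into the red-neighbourhood $N_R(v)$ and the blue-neighbourhood $N_B(v)$ according to the colour of the edge to $v$. Since $|N_R(v)| + |N_B(v)| = n - 1$, the pigeonhole principle forces either $|N_R(v)| \ge R(\ell-1,k)$ or $|N_B(v)| \ge R(\ell,k-1)$. In the first case, applying the definition of $R(\ell-1,k)$ inside $N_R(v)$ yields either a blue $K_k$ (and we are done) or a red $K_{\ell-1}$, which together with $v$ extends to a red $K_\ell$ by construction. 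The second case is symmetric.

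Once this recursion is established, the inductive step is immediate: by the induction hypothesis,
$$R(\ell-1,k) \le \binom{k + \ell - 3}{\ell - 2} \qquad \text{and} \qquad R(\ell,k-1) \le \binom{k + \ell - 3}{\ell - 1},$$
and Pascal's identity gives
$$\binom{k + \ell - 3}{\ell - 2} + \binom{k + \ell - 3}{\ell - 1} = \binom{k + \ell - 2}{\ell - 1},$$
which completes the induction.

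There is no genuine obstacle here; the only thing to be slightly careful about is bookkeeping the base cases so that the induction hypothesis applies to both $(\ell-1,k)$ and $(\ell,k-1)$ (both have strictly smaller $\ell + k$, so the induction is well-founded). The whole argument uses nothing beyond pigeonhole on the neighbourhoods of a single vertex, which is what makes this short classical proof so elegant.
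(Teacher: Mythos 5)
Your proof is correct and takes essentially the same approach as the paper: establish the Erd\H{o}s--Szekeres recursion $R(\ell,k) \le R(\ell-1,k) + R(\ell,k-1)$ by examining the red and blue neighbourhoods of a single vertex, then close the induction with Pascal's identity. The only cosmetic difference is that you argue in the contrapositive direction (showing $n = R(\ell-1,k) + R(\ell,k-1)$ vertices suffice) while the paper counts vertices in a colouring of $K_{R(\ell,k)-1}$ avoiding both cliques; these are the same argument.
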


\begin{proof}
We claim that 
\begin{equation}\label{eq:ESz}
R(\ell,k) \le R(\ell-1,k) + R(\ell,k-1),
\end{equation}
from which the claimed bound follows easily by induction. To prove~\eqref{eq:ESz}, set $n = R(\ell,k) - 1$, and consider a red-blue colouring of $E(K_n)$ with no red copy of $K_\ell$ and no blue copy of $K_k$. Fix a vertex $v$, and observe that $v$ has at most $R(\ell-1,k) - 1$ red neighbours and at most $R(\ell,k-1) - 1$ blue neighbours, since otherwise we could add $v$ to complete a forbidden monochromatic clique. Counting vertices, we obtain~\eqref{eq:ESz}, as required.
\end{proof}

To improve this bound in the case $\ell = 3$, it will be useful to think of the problem in the following way. Let $G$ be the graph of red edges, so $G$ is triangle-free, and our aim is to find a large independent set in $G$ (which corresponds to a blue clique). Note that for every vertex $v \in V(G)$, the set $N(v)$ of neighbours of $v$ is an independent set, since $G$ is triangle-free. Thus the maximum degree of $G$ is at most $k - 1$, and hence\hspace{0.02cm}\footnote{As is standard in graph theory, we write $\alpha(G)$ for the size of the largest independent set in a graph $G$, and $\Delta(G)$ for the maximum degree of $G$. For background on graph theory, we refer the reader to~\cite{BGT}.}
\begin{equation}\label{eq:Turan}
\alpha(G) \ge \frac{n}{\Delta(G) + 1} \ge k
\end{equation}
if $n \ge k^2$. The first inequality can be proved via a greedy algorithm: in each step add an arbitrary vertex $v$ to our independent set, and remove $v$ and its neighbours from the set of available vertices. In the worst case we remove $\Delta(G) + 1$ vertices in each step. 

The basic idea of Ajtai, Komlós and Szemerédi's proof is that if we choose the vertices $v$ randomly, then the average degree of the graph on the remaining (available) vertices should go down, and hence for later choices the set should shrink by much less. Note that for this to be true we need some condition on the graph: for example, if $G$ were a union of cliques of size $\Delta(G) + 1$, then the bound~\eqref{eq:Turan} would be sharp. Perhaps surprisingly, it turns out that the assumption that $G$ is triangle-free suffices to avoid all such bad examples. 

A couple of years later, Shearer~\cite{Sh83} found a short and elegant argument that took the approach of~\cite{AKSz81} to its natural limit. In particular, he proved the following theorem.

\begin{theorem}[Shearer, 1983]\label{lem:Shearer}
Let $G$ be a triangle-free graph with $n$ vertices and average degree~$d$. Then 
$$\alpha(G) \ge \big(1 + o(1) \big) \frac{n \log d}{d}$$
as $d \to \infty$. 
\end{theorem}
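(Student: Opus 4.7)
I would follow Shearer's original strategy, which is to prove the much stronger pointwise inequality
$$\alpha(G) \;\geq\; \sum_{v \in V(G)} f\big(d(v)\big)$$
for every triangle-free graph $G$, where $f : [0,\infty) \to (0,1]$ is a specific convex, decreasing function satisfying $f(x) = (1+o(1))(\log x)/x$ as $x \to \infty$. (Such an $f$ can be written down explicitly, and its convexity verified by a short calculus computation.) Once this pointwise bound is in hand, convexity of $f$ combined with Jensen's inequality applied to the degree sequence of $G$ yields
$$\alpha(G) \;\geq\; n \cdot f(d) \;\geq\; \big(1+o(1)\big)\frac{n \log d}{d}$$
as $d \to \infty$, where $d$ is the average degree; this is precisely the theorem.

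\textbf{The inductive step.} The pointwise bound I would prove by induction on $n = |V(G)|$, the base case being essentially trivial. For the inductive step, pick an arbitrary vertex $v$ and observe the two obvious inequalities
$$\alpha(G) \;\geq\; 1 + \alpha\big(G \setminus N[v]\big) \quad \text{and} \quad \alpha(G) \;\geq\; \alpha(G - v),$$
so $\alpha(G)$ dominates every convex combination of the two right-hand sides. Choosing a suitable weight $p = p\big(d(v)\big)$ and applying the inductive hypothesis to the two smaller graphs, the claim reduces to an inequality of the form
$$p\cdot\bigg(1 + \sum_{u \notin N[v]} f\big(d_1(u)\big)\bigg) + (1-p)\sum_{u \ne v} f\big(d_2(u)\big) \;\geq\; \sum_{u \in V(G)} f\big(d(u)\big),$$
where $d_1, d_2$ denote the degrees after the respective deletions. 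Triangle-freeness enters decisively here: since $N(v)$ is an independent set, removing $N[v]$ drops the degree of each vertex $u$ at distance exactly two from $v$ by exactly $|N(u) \cap N(v)|$, and moreover $\sum_{u \in N(v)} d(u) = d(v) + e\big(N(v), V(G) \setminus N[v]\big)$. Using these identities together with the convexity of $f$, the whole inductive step collapses onto a one-variable inequality in $d = d(v)$, which one verifies directly for the explicit $f$.

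\textbf{Main obstacle.} The induction scaffolding is natural; the real work lies in choosing the function $f$ and the weight $p(d)$ so that the inductive inequality holds with essentially no slack. Both are effectively forced on us by demanding equality for an extremal configuration such as $K_{d,d}$ (a triangle-free graph with $\alpha = d$, matching the bound up to the $\log d$ factor after averaging), and the resulting single-variable inequality must then be verified by an elementary but careful calculus computation. Checking the convexity of $f$ --- essential for the final Jensen step --- is a separate short calculation, and the asymptotic $f(x) \sim (\log x)/x$ as $x \to \infty$ is then immediate from the explicit formula.
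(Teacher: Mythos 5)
Your approach is genuinely different from the paper's (which follows Shearer's original 1983 argument). The paper proves the bound $\alpha(G) \ge f(d)\cdot n$ directly, with $d$ the \emph{average} degree, by picking a \emph{random} vertex $v$, deleting $\{v\}\cup N(v)$, and applying the induction hypothesis to the residual graph. The randomness is doing real work: it yields the clean identity $\Ex[e(G')] = e(G) - \frac{1}{n}\sum_v d(v)^2$ (a consequence of triangle-freeness), after which one only needs the two analytic facts $(d+1)f(d) = 1 + (d-d^2)f'(d)$ and $f'' > 0$. You instead propose a \emph{pointwise} strengthening $\alpha(G) \ge \sum_v f(d(v))$, proved by induction with a \emph{deterministic} vertex and a convex combination of the two moves ``include $v$'' and ``discard $v$,'' with Jensen applied only at the very end. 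Both are legitimate templates, but they are not the same proof.

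There is one place where I would push back on the write-up as it stands. You assert that, after invoking triangle-freeness and convexity, ``the whole inductive step collapses onto a one-variable inequality in $d = d(v)$.'' This is optimistic. In the inductive inequality the terms $\sum_{u\notin N[v]}\bigl[f(d(u) - |N(u)\cap N(v)|) - f(d(u))\bigr]$ and $\sum_{u\in N(v)}\bigl[(1-p)f(d(u)-1) - f(d(u))\bigr]$ depend not only on $d(v)$ but on the degrees of $v$'s neighbours and second neighbours and on how the co-degree mass $\sum_{u}|N(u)\cap N(v)|$ is distributed; triangle-freeness controls the \emph{total} of the latter (it equals $\sum_{w\in N(v)}d(w) - d(v)$) but not its distribution, and the degrees $d(u)$ for $u\in N(v)$ are entirely free. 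To genuinely reduce to a one-variable inequality you would have to argue, via the convexity of $f$ and of $c\mapsto f(x-c)-f(x)$, that the worst case occurs in a symmetric configuration — and the choice of $p = p(d(v))$ must then beat that worst case. This is doable but it is several lemmas, not a ``direct verification,'' and it is exactly the bookkeeping that the random-vertex trick is designed to circumvent. In short: the plan is a defensible alternative route to the same asymptotic, but the hard part is hiding inside the phrase ``collapses onto a one-variable inequality,'' and the paper's proof avoids it.
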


\begin{proof}[Sketch of the proof] 
We will prove by induction on $n$ that $\alpha(G) \ge f(d) \cdot n$, where
$$f(d) = \frac{d \log d - d + 1}{(d-1)^2}.$$
Choose a random vertex $v$, and apply the induction hypothesis to the graph $G'$, obtained by deleting the vertices $\{v\} \cup N(v)$. It follows that
$$\alpha(G) \ge \Ex\big[ f(d') \big( n - d(v) - 1 \big) \big] + 1,$$ 
where $d'$ is the average degree of $G'$. The claim now follows from a short calculation, using the assumption that $G$ is triangle-free to show that
$$\Ex\big[ e(G') \big] = e(G) - \frac{1}{n} \sum_{v \in V(G)} d(v)^2,$$
and the following properties of the function $f$:
$$(d+1)f(d) = 1 + (d-d^2)f'(d) \qquad \text{and} \qquad f''(d) > 0$$
for all $d > 0$.
\end{proof}

The upper bound in Theorem~\ref{thm:R3k} follows almost immediately from Theorem~\ref{lem:Shearer}. 

\begin{proof}[Proof of the upper bound in Theorem~\ref{thm:R3k}]
Let $G$ be a triangle-free graph with $n$ vertices and no independent set of size $k$. Observe that $\Delta(G) < k$, since the neighbourhood of each vertex is an independent set. By Theorem~\ref{lem:Shearer}, it follows that 
$$k > \alpha(G) \ge \big(1 + o(1) \big) \frac{n \log k}{k}$$
as $k \to \infty$, and hence that 
$$n \le \big(1 + o(1) \big) \frac{k^2}{\log k},$$ 
as required.
\end{proof}

An important difference between the proof of Theorem~\ref{lem:Shearer} above, and the earlier proof (of a weaker bound) in~\cite{AKSz81}, is that in Shearer's proof we add one vertex at a time to the independent set, whereas Ajtai, Komlós and Szemerédi added roughly $n/d$ vertices in each step. A variant of this latter method, nowadays known as the `R\"odl nibble', was introduced by R\"odl~\cite{R85} in 1985 in order to prove a conjecture of Erd\H{o}s and Hanani~\cite{EH63} on the existence of approximate designs. This method has proved to be extremely powerful and flexible; for example, variants of it have been used in recent years to prove the following significant generalisations of Theorem~\ref{lem:Shearer}. For the first of these, let us write $\Delta_2(G)$ for the maximum co-degree (size of the common neighbourhood of two vertices) in $G$. 

\begin{theorem}[Campos, Jenssen, Michelen and Sahasrabudhe, 2023+]
Let $G$ be a graph with $n$ vertices, $\Delta(G) \le d$ and $\Delta_2(G) \le d / (\log d)^8$. Then 
$$\alpha(G) \ge \big(1 + o(1) \big) \frac{n \log d}{d}$$
as $d \to \infty$. 
\end{theorem}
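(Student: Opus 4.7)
The plan is to follow the occupation-method / hard-core entropy approach developed by Davies, Jenssen, Perkins and Roberts for the triangle-free case, and adapt it to the small-codegree setting. Draw $I$ from the hard-core measure $\mu_\lambda$ on independent sets of $G$, in which $\mu_\lambda(I) \propto \lambda^{|I|}$, for a suitable $\lambda$ of order $(\log d)/d$. Then
$$\alpha(G) \ge \Ex|I| = \sum_{v \in V(G)} \Pr[v \in I],$$
so it suffices to lower bound the average occupation probability at each vertex by $(1+o(1))(\log d)/d$.

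For each vertex $v$, expose $I$ on $V(G) \setminus (\{v\} \cup N(v))$, and let $S \subseteq N(v)$ be the set of neighbours of $v$ that are not blocked (that is, have no exposed neighbour in $I$). Conditional on the exposure, the distribution of $I \cap (\{v\} \cup N(v))$ is the hard-core measure on $G[S \cup \{v\}]$, so
$$\Pr[v \in I \mid \text{exposure}] = \frac{\lambda}{\lambda + Z_\lambda(G[S])},$$
where $Z_\lambda(H) = \sum_{J \in \cI(H)} \lambda^{|J|}$ is the independence polynomial. When $G$ is triangle-free, $G[S]$ is edgeless and $Z_\lambda(G[S]) = (1+\lambda)^{|S|}$; Jensen's inequality applied to the convex function $s \mapsto \lambda/(\lambda + (1+\lambda)^s)$, together with optimisation over $\lambda$, then recovers Shearer's bound.

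Under the hypothesis $\Delta_2(G) \le d/(\log d)^8$, every vertex of $N(v)$ has at most $d/(\log d)^8$ neighbours inside $N(v)$, so $\Delta(G[S]) \le d/(\log d)^8$. The technical heart of the proof, and the main obstacle, is then an asymptotically sharp lower bound of the form
$$Z_\lambda(H) \ge (1+\lambda)^{|V(H)|}(1 - \eta)$$
for graphs $H$ with $\Delta(H) \le d/(\log d)^8$ and $\lambda$ of order $(\log d)/d$, with $\eta = o(1)$ small enough that after running it through Jensen and summing over $v$ the resulting loss is absorbed into the $(1+o(1))$ factor. One natural route is a cluster expansion for $\log Z_\lambda(H)$, controlling low-order terms using $e(H) \le |V(H)|\, d /(\log d)^8$ and the smallness of $\lambda$; alternatively one can proceed inductively by peeling off low-degree vertices. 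The exponent $8$ is calibrated so that several powers of $\log d$ can be lost along the way without spoiling the leading constant: in the cluster-expansion estimate for $Z_\lambda(H)$, in converting pointwise bounds on $|S|$ into averaged ones via convexity, and in the final optimisation over $\lambda$ that yields the claimed $(1+o(1))(\log d)/d$.
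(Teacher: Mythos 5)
The survey does not contain a proof of this theorem: it states it and, in the surrounding discussion, attributes the Campos--Jenssen--Michelen--Sahasrabudhe argument to a variant of the R\"odl nibble with regularization steps (this is the point of the sentence ``variants of it have been used in recent years to prove the following significant generalisations of Theorem~\ref{lem:Shearer}'' and the later reference to regularization steps going back to Alon--Kim--Spencer). Your proposal instead tries to push the DJPR hard-core/occupancy-fraction method through, which is a genuinely different route from the one the authors actually used; that in itself is fine, but it means the burden is on you to show the adaptation really closes.

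There are two concrete problems with the proposal as written. First, your summary of the triangle-free case is not an accurate account of DJPR. If you only use $\Ex|S| \le d$ and apply Jensen to $s \mapsto \lambda/(\lambda + (1+\lambda)^s)$, the best you get (over all $\lambda$) is roughly $\Pr[v \in I] \gtrsim (\log d)/d^2$, which after summing over $v$ misses the target by a factor of $d$. The DJPR argument gains the extra factor by deriving a second, self-consistent constraint that links the occupancy fraction to the expected number of ``uncovered'' neighbours (and hence controls $\Ex|S|$ far better than $d$), and then solving the resulting system; the triangle-freeness is used precisely to make this constraint exact, because $I \cap S$ is then a product measure on $S$. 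Your write-up omits the constraint entirely, and it is exactly the part that breaks when $G[S]$ is not edgeless. Second, the proposed ``technical heart'' — a lower bound $Z_\lambda(H) \ge (1+\lambda)^{|V(H)|}(1-\eta)$ — points the wrong way for the step you attach it to. To lower-bound $\Pr[v\in I\mid \text{exposure}] = \lambda/(\lambda + Z_\lambda(G[S]))$, you want $Z_\lambda(G[S])$ to be \emph{small}, and the upper bound $Z_\lambda(H) \le (1+\lambda)^{|V(H)|}$ is automatic. A near-matching lower bound could plausibly be used to salvage the DJPR constraint (by showing $G[S]$ ``looks edgeless'' to the hard-core measure), but you do not say how, and without that the argument does not produce the leading constant. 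As it stands the proposal has the right vocabulary but not a working proof; the paper's own (cited) route is a nibble, and if you want to pursue the occupancy-fraction alternative you need to write down and verify the analogue of the DJPR constraint under the codegree hypothesis.
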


This result was used by Campos, Jenssen, Michelen and Sahasrabudhe~\cite{CJMS23} to improve the best known lower bound on the density of a sphere packing in high dimensions. The following very recent result generalises Theorem~\ref{lem:Shearer} in a different direction. 

\begin{theorem}[Dhawan, Janzer and Methuku, 2025+]
Let $H$ be a graph with $\chi(H) = 3$, and let $G$ be an $H$-free graph with $n$ vertices and average degree~$d$. Then 
$$\alpha(G) \ge \big(1 + o(1) \big) \frac{n \log d}{d}$$
as $d \to \infty$. 
\end{theorem}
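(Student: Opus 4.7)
The plan is to adapt the inductive argument behind the sketch of Theorem~\ref{lem:Shearer}, preserving Shearer's weight function $f(d) = \frac{d \log d - d + 1}{(d-1)^2}$ and the random closed-neighbourhood deletion step, while replacing the single use of triangle-freeness by a quantitative consequence of $\chi(H) = 3$. Concretely, I would again prove $\alpha(G) \geq f(d) \cdot n$ by induction on $n$: choose $v \in V(G)$ uniformly, apply the induction hypothesis to $G' := G - N[v]$, and obtain
\[
\alpha(G) \;\geq\; \Ex\big[ f(d')(n - d(v) - 1) \big] + 1,
\]
where $d'$ is the average degree of $G'$.

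A direct computation (without any sparsity hypothesis on $G$) shows that
\[
\Ex\big[ e(G') \big] \;=\; e(G) \;-\; \frac{1}{n}\sum_{v \in V(G)} d(v)^2 \;+\; \frac{3\,T(G)}{n},
\]
where $T(G)$ denotes the number of triangles of $G$; Shearer's proof is exactly the special case $T(G) = 0$. My proposal is therefore to show (i) that Shearer's convexity manipulation using $f''(d) > 0$ and $(d+1)f(d) = 1 + (d - d^2) f'(d)$ loses only a multiplicative $1 - o(1)$ factor provided the error $3T(G)/n$ is negligible compared to $\frac{1}{n}\sum_v d(v)^2$, and (ii) that one may preprocess $G$ by passing to an induced subgraph on $(1 - o(1))n$ vertices, of average degree $(1 - o(1))d$, in which the triangle count is small enough for (i) to apply. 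Together these would yield $\alpha(G) \geq (1 + o(1)) \frac{n \log d}{d}$.

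The technical heart, and the main obstacle, is step (ii). It is \emph{not} true that $H$-free graphs have few triangles in general (a disjoint union of copies of $K_4$ is $C_5$-free yet triangle-rich), so the cleaning cannot follow from a global count. Instead, I would exploit that $\chi(H) = 3$ yields a colour class $W \subseteq V(H)$ for which $H - W$ is bipartite: if some subgraph of $G$ contained a sufficiently dense triangle cluster, then a K\H{o}v\'ari--S\'os--Tur\'an-style supersaturation argument applied to a blow-up of $H - W$, together with the extra edges supplied by the surplus triangles, should force a copy of $H$, contradicting $H$-freeness. An iterative cleaning procedure that removes vertices through which an atypical number of triangles pass should then produce the required subgraph. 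Controlling the loss in $|V(G)|$ and in the average degree tightly enough to preserve the sharp leading constant $1$ in front of $\frac{n \log d}{d}$ will require a stability-type analysis of near-extremal $H$-free configurations, and this is where I expect the bulk of the technical work to lie.
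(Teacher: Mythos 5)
Two comments, one about the paper and one about your argument.

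First, the paper does not actually give a proof of this statement: it cites Dhawan, Janzer and Methuku, and in the preceding paragraph explains that both this result and the Campos--Jenssen--Michelen--Sahasrabudhe generalisation are proved by \emph{nibble}-type arguments (variants of the method of Ajtai--Koml\'os--Szemer\'edi and R\"odl), not by Shearer's one-vertex-at-a-time induction. The nibble exploits a \emph{local} consequence of $H$-freeness: when $\chi(H)=3$, the neighbourhood of every vertex is $K_{t,t}$-free for some fixed $t$, and hence (by K\H{o}v\'ari--S\'os--Tur\'an) has at most $O(d^{2-1/t})$ edges; this local sparsity is exactly what a nibble iteration needs, and no global triangle bound is ever required. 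Your proposal instead tries to extend Shearer's global induction, and routes all of the difficulty through step~(ii), a global statement about the triangle count of $G$.

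That is where the gap lies, and I believe step~(ii) is simply false as stated. Take $V = A \cup B$ with $|A|=|B|=n/2$, put the complete bipartite graph between $A$ and $B$, and place $n/6$ vertex-disjoint triangles inside $A$. One checks directly that this graph contains no $K_{2,2,2}$ (any two parts of a putative $K_{2,2,2}$ that both meet $A$ must lie entirely in $A$, forcing a $K_{2,2}$ inside a disjoint union of triangles, which is impossible), and hence it is $H$-free for every $H$ with $\chi(H)=3$ and parts of size at least $2$. Its average degree is $d = (1/2+o(1))n$, and it has $\Theta(n^2)$ triangles (each of the $n/2$ edges inside $A$ lies in $n/2$ triangles with an apex in $B$). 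Any induced subgraph on $(1-o(1))n$ vertices of average degree $(1-o(1))d$ still retains $\Omega(n^2) \gg d^{o(1)} n$ of these triangles, so the correction term $3T(G')/n'$ in your identity for $\Ex[e(G'')]$ is not negligible compared with $\frac{1}{n'}\sum_v d(v)^2$, and step~(i) cannot be applied. Of course, for this particular $G$ the theorem is trivial (take $B$ as an independent set), so the example does not refute the \emph{theorem}; but it does refute the cleaning lemma you need, and any repair would have to restrict the cleaning to graphs with small independence number, which makes the argument circular. Even granting step~(ii), step~(i) is not automatic: Shearer's induction hinges on the exact identity $(d+1)f(d) = 1+(d-d^2)f'(d)$, and it is genuinely delicate that a nonzero $3T/n$ perturbation loses only a $1-o(1)$ multiplicative factor in the leading constant -- this would need to be proved, not asserted. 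The nibble formulation avoids both problems by working locally from the start, which is why it is the route actually taken.
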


All of these proofs obtain the same bound because they find (very roughly speaking) a \emph{typical} independent set, and in a random $d$-regular graph most independent sets have this size. However, the \emph{largest} independent sets in such graphs are roughly twice as big, 
and it is a major open problem to determine which of these two bounds is closer to the truth. 
In particular, note that any improvement of the bound in Theorem~\ref{lem:Shearer} would translate immediately into an improvement of the upper bound on $R(3,k)$. A positive answer to the following problem has been conjectured by many people over the years. 


\begin{problem}\label{prob:Shearer:strong}
Fix $\eps > 0$. Is it true that, if $d$ is sufficiently large, then
$$\alpha(G) \ge ( 2 - \eps ) \frac{n \log d}{d}$$
for every triangle-free graph $G$ with $n$ vertices and maximum degree~$d$?
\end{problem}

Indeed, even proving a weaker bound, with $2 - \eps$ replaced by $1 + \eps$, would be a major breakthrough. Similarly, it would be extremely interesting to find a counterexample.\footnote{Note that doing so would not necessarily improve the lower bound on $R(3,k)$; to do so, one would need a counterexample with $d \sim \sqrt{n \log n}$.}

Another piece of evidence in favour of a positive answer to Problem~\ref{prob:Shearer:strong} is the following theorem of Davies, Jenssen, Perkins and Roberts~\cite{DJPR}, which shows that if $G$ is a triangle-free graph with maximum degree at most $d$, then even the \emph{average} size of an independent set in $G$ is at least as large as the bound given by Shearer's theorem. 

\begin{theorem}[Davies, Jenssen, Perkins and Roberts, 2018]\label{thm:R3k:average}
Let $G$ be a triangle-free graph with $n$ vertices and $\Delta(G) \le d$, and let $S$ be a random independent set, chosen uniformly from all of the independent sets of $G$. Then 
$$\Ex\pigl[ |S| \pigr] \ge \big(1 + o(1) \big) \frac{n \log d}{d}$$
as $d \to \infty$. 
\end{theorem}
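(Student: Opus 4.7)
Plan. I would follow the occupancy method from statistical physics, working directly with the uniform measure on independent sets of $G$. Fix a vertex $v$ and condition on $S^{\mathrm{out}} := S \cap \bigl( V(G) \setminus N[v] \bigr)$. Say that a neighbour $u \in N(v)$ is \emph{free} (with respect to $S^{\mathrm{out}}$) if $u$ has no neighbour in $S^{\mathrm{out}}$, and let $F_v$ denote the (random) set of free neighbours. Crucially, because $G$ is triangle-free, $N(v)$ is an independent set, so $F_v$ is too. Consequently, provided $v$ itself is free, the extensions of $S^{\mathrm{out}}$ to an independent set of $G$ are exactly $S^{\mathrm{out}} \cup \{v\}$ together with $S^{\mathrm{out}} \cup T$ for each $T \subseteq F_v$, and all $1 + 2^{|F_v|}$ such extensions are equally likely. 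This gives the closed forms
\[
\Pr\bigl[v \in S \,\big|\, S^{\mathrm{out}}\bigr] = \frac{1}{1 + 2^{|F_v|}}, \qquad \Ex\bigl[|S \cap N(v)| \,\big|\, S^{\mathrm{out}}\bigr] = \frac{|F_v| \cdot 2^{|F_v|-1}}{1 + 2^{|F_v|}}.
\]

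The heart of the argument is a local occupancy inequality, to be proved by a single-variable calculus exercise: there is a parameter $\gamma > 0$ such that, for every integer $f$ with $0 \le f \le d$,
\[
\frac{1}{1+2^f} \;+\; \frac{\gamma}{d} \cdot \frac{f \cdot 2^{f-1}}{1+2^f} \;\ge\; \bigl(1-o(1)\bigr)(1+\gamma)\,\frac{\log d}{d}.
\]
Granted this, taking expectations (with $f = |F_v|$), summing over $v$, and using
\[
\sum_v \Ex\bigl[|S \cap N(v)|\bigr] \;=\; \Ex\bigg[ \sum_{u \in S} d(u) \bigg] \;\le\; d\,\Ex\bigl[|S|\bigr],
\]
yields $(1+\gamma)\,\Ex[|S|] \ge (1-o(1))(1+\gamma)\,\frac{n \log d}{d}$, from which the theorem follows after cancellation. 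A standard regularisation (by padding the graph to make it $d$-regular at negligible cost to $n$) handles the reduction to the regular case.

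The main obstacle is establishing the local inequality with the \emph{sharp} leading constant $1$. The left-hand side is close to $1/2$ at $f=0$, grows linearly in $f$ once $f \gtrsim \log_2 d$, and is minimised near $f \asymp \log_2 d$; a naive fixed choice of $\gamma$ produces only a leading constant strictly below $1$. To recover Shearer's constant exactly, the cleanest route is to embed the argument in the one-parameter family of hard-core measures at fugacity $\lambda > 0$, prove the analogous local inequality there with $\lambda$ as an auxiliary parameter, and transfer the sharp bound back to $\lambda = 1$ using the identity $\partial_\lambda \log Z_G(\lambda) = \Ex_\lambda[|S|]/\lambda$ together with convexity of $\log Z_G$. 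The triangle-free hypothesis is used only to ensure the independence of $F_v$, which is precisely what makes the conditional enumeration on $N[v]$ tractable; if edges could appear inside $F_v$ then the number of extensions would collapse and the entire local estimate would fail.
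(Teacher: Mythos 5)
The survey deliberately does \emph{not} prove the theorem in the sharp form stated; it proves only the weaker bound $\Ex[|S|] \ge n\log_2 d/(12d)$ via Shearer's $1995$ argument (in Alon's presentation), and explicitly defers the sharp constant $1$ to the occupancy-method proof of Davies, Jenssen, Perkins and Roberts. Your proposal is essentially a blueprint for that original DJPR proof, which is a genuinely different (and stronger) route than the one sketched in the paper, and its structure is sound. The conditioning on $S^{\mathrm{out}} = S \setminus N[v]$, the enumeration of the $2^{|F_v|}+1$ equally likely extensions (using triangle-freeness to make $F_v$ an independent set), and the resulting closed forms for $\Pr[v\in S \mid S^{\mathrm{out}}]$ and $\Ex[|S\cap N(v)| \mid S^{\mathrm{out}}]$ are all correct. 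One small redundancy: since $S^{\mathrm{out}}$ is supported outside $N[v]$, the vertex $v$ is \emph{always} externally uncovered, so the caveat ``provided $v$ is free'' is vacuous. You are also right that the displayed $\lambda=1$ local inequality cannot hold with leading constant $1$ for any $\gamma$: optimising $\gamma$ in the $\lambda=1$ linear program yields only a leading constant of $(1+o(1))/(2\log 2)\approx 0.72$. The fix you outline is the correct one and matches DJPR: the analogous LP at fugacity $\lambda$ gives a constant $\lambda/((1+\lambda)\log(1+\lambda))$, which tends to $1$ as $\lambda\to 0$, so taking $\lambda_* \asymp 1/\log d$ yields $\Ex_{\lambda_*}[|S|]\ge (1-o(1))\,n\log d/d$, and then convexity of $\log Z_G$ (equivalently, monotonicity of $\Ex_\lambda[|S|]$ in $\lambda$, via $\partial_\lambda \log Z_G = \Ex_\lambda[|S|]/\lambda$) transfers the bound to $\lambda = 1$. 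Finally, the regularisation step is unnecessary: $\sum_v \Ex[|S\cap N(v)|] = \Ex[\sum_{u\in S}d(u)] \le d\,\Ex[|S|]$ requires only $\Delta(G)\le d$ and already points in the favourable direction in the argument, so no padding to a $d$-regular graph is needed.
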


The proof of Theorem~\ref{thm:R3k:average} relies on a connection to the hard-core model from statistical physics. For simplicity, we will instead give a beautiful proof of a slightly weaker bound, which was discovered by Shearer~\cite{Sh95} in 1995. More precisely, we will follow the elegant presentation of Alon~\cite{A96}, who extended the proof to graphs that are `locally-sparse', in the sense that neighbourhoods induce subgraphs with bounded chromatic number.

\begin{proof}[Proof of Theorem~\ref{thm:R3k:average} up to a constant factor]
The key idea is to define, for each vertex $v \in V(G)$, a random variable 
$$X_v = \pigl| N(v) \cap S \pigr| + d \cdot \1\big[ v \in S \big].$$
Observe that, by linearity of expectation, 
\begin{equation}\label{eq:Shearer:sumXv}
\sum_{v \in V(G)} \Ex\pigl[ X_v \pigr] \le 2d \cdot \Ex\pigl[ |S| \pigr],
\end{equation}
since each vertex has at most $d$ neighbours. Now fix a vertex $v \in V(G)$, and reveal the set $S$ outside the set $N^\circ(v) = \{v\} \cup N(v)$. We claim that 
\begin{equation}\label{eq:Shearer:via:Alon}
\Ex\big[ \hspace{0.05cm} X_v \;\big|\; S \setminus N^\circ(v) = T  \, \big] \ge \frac{\log_2 d}{6}
\end{equation}
for \emph{every} possible choice of $T$, and hence that the same lower bound holds for $\Ex\pigl[ X_v \pigr]$. 

To prove~\eqref{eq:Shearer:via:Alon}, observe that either $S = T \cup \{v\}$, or $S \setminus T$ is a subset of
$$Y = \big\{ u \in N(v) : N(u) \cap T = \emptyset \big\}.$$
Note also that $Y$ is an independent set, since $Y \subset N(v)$ and $G$ is triangle-free, and therefore each of these $2^{|Y|} + 1$ possibilities has the same probability, by the definition of $S$. By the definition of $X_v$, it follows that
$$\Ex\big[ \hspace{0.05cm} X_v \;\big|\; S \setminus N^\circ(v) = T \, \big] = \sum_{Z \subset Y} \frac{|Z|}{2^{|Y|} + 1} + \frac{d}{2^{|Y|} + 1} \ge \frac{\log_2 d}{6},$$
as required, since the sum is at least $|Y|/3$, and if $2|Y| < \log_2 d$ then the second term is large. Combining~\eqref{eq:Shearer:sumXv} and~\eqref{eq:Shearer:via:Alon} gives
$$\Ex\pigl[ |S| \pigr] \ge \frac{1}{2d} \sum_{v \in V(G)} \Ex\pigl[ X_v \pigr] \ge \frac{n \log_2 d}{12d},$$
as required. 
\end{proof}

The authors of~\cite{DJPR} 
moreover conjectured that the maximum size of an independent set in a triangle-free graph of minimum degree $d$ should be at least $2 - o(1)$ times the average size, as $d \to \infty$; similarly to Problem~\ref{prob:Shearer:strong}, any lower bound better than $1 + o(1)$ would constitute a very significant breakthrough. 

Finally, let us mention one more beautiful open problem. 

\begin{problem}\label{prob:Shearer:cliques}
Fix $\ell \ge 4$. Does there exist a constant $c = c(\ell) > 0$ such that
\begin{equation}\label{eq:Shearer:cliques}
\alpha(G) \geq \frac{cn \log d}{d}
\end{equation}
for every $K_\ell$-free graph $G$ with $n$ vertices and maximum degree~$d$?
\end{problem}

For $\ell \ge 4$, the best-known bound for $K_\ell$-free graphs was proved by Shearer~\cite{Sh95} in 1995, and falls short of~\eqref{eq:Shearer:cliques} by a factor of $\log\log d$.



\section{Lower bounds on $R(3,k)$}\label{sec:R3k:lower}

In this section we will describe seven different constructions, proving successively stronger lower bounds on $R(3,k)$, culminating in the colouring of Hefty, Horn, King and Pfender~\cite{HHKP} which implies the lower bound in Theorem~\ref{thm:R3k}. 

\subsection{A geometric construction}\label{sec:Erdos:geometry}

The first non-trivial lower bound on $R(3,k)$ was proved by Erd\H{o}s~\cite{E57} in 1957, who used an explicit geometric construction to show that
$$R(3,k) \ge k^{1+c}$$
for some constant $c > 0$. We will describe a slight variant of Erd\H{o}s' colouring, which relies on the following elegant theorem of Kleitman~\cite{K66}. 

\begin{theorem}[Kleitman, 1966]\label{thm:Kleitman}
Let $A \subset \{-1,1\}^n$. If 
$$|A| > \sum_{i = 0}^d {n \choose i},$$
then there exist $x,y \in A$ with $\<x,y\> < n - 4d$. 
\end{theorem}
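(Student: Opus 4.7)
The plan is to prove the contrapositive: if every pair $x, y \in A$ satisfies $\langle x, y \rangle \ge n - 4d$, then $|A| \le \sum_{i=0}^d \binom{n}{i}$. First I would pass to the $\{0,1\}$-cube via the bijection $a \mapsto (a+1)/2$. Under this change of coordinates one has $\langle x, y \rangle = n - 2\, d_H(x, y)$, where $d_H$ is the Hamming distance, so the hypothesis becomes that $A$ has Hamming diameter at most $2d$, and the right-hand side is exactly the size of a Hamming ball of radius $d$. The problem is therefore purely metric: bound the maximum cardinality of a subset of $\{0,1\}^n$ of prescribed diameter.

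The main tool is compression. For each coordinate $i \in [n]$, define the shift $\sigma_i$: for each $x \in A$ with $x_i = 1$, replace $x$ by $x - e_i$ if $x - e_i \notin A$, and otherwise keep $x$. For each pair $i \neq j$, define the swap $s_{ij}$: for each $x \in A$ with $x_i = 1$ and $x_j = 0$, replace $x$ by $x - e_i + e_j$ if the latter is not already in $A$. Both operations preserve $|A|$ by construction. The crucial step is a short case analysis verifying that neither operation increases the Hamming diameter. For $\sigma_i$, the interesting case is when $y, z \in \sigma_i(A)$ with $y$ shifted and $z$ kept but $z_i = 1$; here necessarily $z - e_i \in A$ (otherwise $z$ would itself have been shifted), and one checks $d_H(y, z) = d_H(y + e_i, z - e_i) \le 2d$, so distances between points of $\sigma_i(A)$ are realised by points of $A$. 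A parallel case analysis handles $s_{ij}$, using the partners $y^\ast = y + e_i - e_j$ and $z' = z - e_i + e_j$ in the original set $A$.

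Iterating these operations must terminate because every nontrivial step strictly decreases the integer-valued potential $\sum_{x \in A} \sum_i 2^i x_i$. The resulting stable set $A^\ast$ is closed under both flipping any $1$-coordinate to $0$ and exchanging a $1$ with a $0$. Consequently, membership in $A^\ast$ depends only on the Hamming weight of a vector, so $A^\ast = \{x \in \{0,1\}^n : |x| \le r\}$ is a Hamming ball of some radius $r$.

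To finish, $|A^\ast| = |A|$ and $A^\ast$ still has diameter at most $2d$. In the regime of interest (where $n$ is large compared to $d$, so in particular $n \ge 2r$), the ball of radius $r$ contains two vectors of weight $r$ with disjoint supports, at Hamming distance $2r$; hence $2r \le 2d$, i.e., $r \le d$, and therefore $|A| = \sum_{i=0}^r \binom{n}{i} \le \sum_{i=0}^d \binom{n}{i}$, as required. The main obstacle in this plan is the case analysis verifying diameter-preservation under $\sigma_i$ and $s_{ij}$; once those two lemmas are in hand, termination, the identification of $A^\ast$ as a Hamming ball, and the final computation are routine.
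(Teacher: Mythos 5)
Your translation to the Hamming cube and the two diameter-preservation lemmas are correct (the case analyses for $\sigma_i$ and $s_{ij}$, including the use of the partner vectors $y+e_i-e_j$ and $z-e_i+e_j$ when both endpoints have $(z_i,z_j)=(1,0)$, do go through), and the potential argument does force termination provided the swaps are oriented in one fixed direction, say $i>j$. The gap is in the next step: the stable set $A^\ast$ is \emph{not} a Hamming ball, and membership in it does \emph{not} depend only on Hamming weight. Stability under $\sigma_i$ and the one-directional swaps $s_{ij}$ ($i>j$) only makes $A^\ast$ a downset in the ``shift-left'' partial order on subsets (remove an element, or replace an element by a smaller one); it does not give closure under moving a $1$ to a \emph{higher} index, which is what you would need to conclude weight-invariance. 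If you instead allow swaps in both directions, your potential $\sum_{x\in A}\sum_i 2^i x_i$ is no longer monotone and the termination argument collapses.

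A concrete counterexample to the structural claim: in $\{0,1\}^3$ take (in set notation) $A^\ast=\{\emptyset,\{1\},\{2\},\{1,2\}\}$. This is stable under every $\sigma_i$ and every $s_{ij}$ with $i>j$, it has diameter $2$ (so $d=1$), and yet it is not the Hamming ball of radius $1$: it contains $\{1,2\}$ of weight $2$ but not $\{3\}$ of weight $1$. So ``membership depends only on weight'' is false, and the final step of your argument, which computes $|A^\ast|=\sum_{i\le r}\binom{n}{i}$ and then bounds $r\le d$, does not apply. (In this small example the cardinality bound happens to hold with equality anyway, but the \emph{reasoning} that produced it is broken, and for larger $n$ the compressed family will typically contain sets of sizes up to $2d$, well beyond radius $d$, distributed non-uniformly across weights.)

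This is not a cosmetic gap. Kleitman's diametric theorem remains nontrivial even after compression: a shifted/downset family of diameter $2d$ need not be a ball, and one still needs a genuine counting argument (Kleitman's original 1966 proof is a direct combinatorial induction; the later compression-based proofs, e.g.\ Frankl's, use shiftedness only as a structural simplification and then run a separate chain or pairing argument). To repair your proposal you would need to replace the ``$A^\ast$ is a ball'' step with an actual proof that a stable family of diameter $2d$ has size at most $\sum_{i\le d}\binom{n}{i}$ --- which is essentially the whole theorem.

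Two smaller remarks, which matter less but are worth fixing: the potential argument should explicitly restrict the swaps to one orientation (otherwise termination fails); and the final estimate ``the ball of radius $r$ contains two disjoint-support vectors of weight $r$'' requires $n\ge 2r$, which should be justified rather than waved away as ``the regime of interest'', especially since the theorem as stated has no hypothesis relating $n$ and $d$.
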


Now define a graph $G$ with vertex set $\{-1,1\}^n$ and edge set 
$$E(G) = \big\{ xy \,:\, \< x, y \> < -n/3 \big\}.$$
Observe that $G$ is triangle-free, and that, by Kleitman's theorem applied with $d = n/3$, 
$$\alpha(G) \le \sum_{i = 0}^{n/3} {n \choose i} \le n {n \choose n/3} < 2^{(1-c)n},$$
for some constant $c > 0$. Setting $k = 2^{(1-c)n}$, it follows that $G$ is a triangle-free graph with at least $k^{1+c}$ vertices and $\alpha(G) < k$, as required.

\subsection{The first probabilistic construction} 

Just two years later, Erd\H{o}s~\cite{E59} took another important step forward, by giving the first lower bound for $R(3,k)$ using a random graph. The idea is to choose $p = p(n)$ so that $G(n,p)$ typically has fewer than $n/2$ triangles, and then remove one vertex from each. This produces a triangle-free graph $G$ with 
\begin{equation}\label{eq:alpha:Gnp}
\alpha(G) \le \alpha\big( G(n,p) \big) \le \frac{2\log(pn)}{p},
\end{equation}
since removing vertices cannot increase the independence number. To have fewer than $n/2$ triangles we need to take $p \le n^{-2/3}$ (so that $p^3 n^3 \le n$), and we therefore obtain the bound 
$$R(3,k) \gtrsim \bigg( \frac{k}{\log k} \bigg)^{3/2}.$$

\subsection{A better idea: removing edges} 

Removing a vertex from each seems like a very inefficient way of destroying triangles, especially when there is a very natural (and much more efficient) alternative: simply remove one edge from each instead. This introduces a problem, however; removing edges can increase the size of the largest independent set. 

Controlling this increase in the independence number is not easy, but it has a significant payoff: if we only need the number of triangles in $G(n,p)$ to be smaller than the number of edges, then we can take $p \approx n^{-1/2}$ (so that $p^3 n^3 \approx pn^2$). If we can show that~\eqref{eq:alpha:Gnp} still holds up to a constant factor, then we would obtain a bound of the form  
\begin{equation}\label{eq:Erdos:R3k}
R(3,k) \gtrsim \bigg( \frac{k}{\log k} \bigg)^{2}.
\end{equation}
This is exactly what Erd\H{o}s~\cite{E61} achieved in 1961, in a paper that was far ahead of its time. Simpler proofs of~\eqref{eq:Erdos:R3k} 
were later discovered by Spencer~\cite{S77}, using the Lovász Local Lemma (see~\cite[Chapter~5]{AS}), 
and by Krivelevich~\cite{K95}. For example, the Local Lemma implies that if $p \approx n^{-1/2}$, then with extremely small but (crucially) \emph{non-zero} probability, 
$$K_3 \not\subset G(n,p) \qquad \text{and} \qquad \alpha\big( G(n,p) \big) = O\big(\sqrt{n} \log n \big),$$ 
which implies Erd\H{o}s' bound on $R(3,k)$. 
Krivelevich, on the other hand, removed a maximal collection of edge-disjoint triangles from $G(n,p)$, and then used large deviation inequalities (including a beautiful inequality of  Erd\H{o}s and Tetali~\cite{ET}) to bound the probability that in doing so we remove every edge from some set of size $k$. 

\subsection{Kim's nibble} 

The final factor of $\log k$ separating the upper and lower bounds on $R(3,k)$ was finally removed by Kim~\cite{Kim} in 1995. To do so, he used a R\"odl-nibble-like process to construct a triangle-free graph $G$ with\footnote{Here we write $d(G)$ for the average degree of a graph $G$. The graph $G$ (and all of the other graphs in this section) can also be taken to be `almost regular', meaning that $d(v) = \big( 1 + o(1) \big) d(G)$ for every $v \in V(G)$.} 
$$d(G) = \Theta\big( \sqrt{n \log n} \big) \qquad \text{and} \qquad \alpha(G) \approx \alpha\big( G(n,p) \big) = O\big(\sqrt{n \log n} \big).$$
That is, a random-like triangle-free graph with density larger by a factor of $\sqrt{\log n}$ than the construction of Erd\H{o}s, and without significantly larger independent sets than the Erd\H{o}s--Rényi random graph with the same density. Note that, together with the upper bound of Ajtai, Komlós and Szemerédi~\cite{AKSz}, this implies that
$$R(3,k) = \Theta\bigg( \frac{k^2}{\log k} \bigg).$$
In each step of Kim's nibble, he added each edge that does not create a triangle with the previously-chosen edges independently at random with probability $\eps n^{-1/2}$, and then destroyed any triangles created in the process using ideas from the proof of Krivelevich~\cite{K95}.

\subsection{The triangle-free process} 

Just as Shearer tightened the Ajtai--Komlós--Szemerédi bound using a one-vertex-at-a-time version of their nibble, it is natural to try to tighten Kim's bound by adding edges one at a time, with each chosen uniformly at random from those that do not create a triangle. This \emph{triangle-free process} was actually suggested several years earlier, by Bollobás and Erd\H{o}s, and motivated Kim's approach. It is surprisingly difficult to control, however, and the first results using it were only obtained in 1995 by Erd\H{o}s, Suen and Winkler~\cite{ESW}, who used it to give yet another proof of Erd\H{o}s' bound~\eqref{eq:Erdos:R3k}, and then in 2009 by Bohman~\cite{Boh}, who used it to reprove Kim's lower bound. Finally, the process was tracked to its asymptotic end by Fiz Pontiveros, Griffiths and Morris~\cite{FGM} and Bohman and Keevash~\cite{BK}, proving the existence of a graph $G$ with
$$d(G) = \bigg( \frac{1}{\sqrt{2}} + o(1) \bigg) \sqrt{n \log n} \qquad \text{and} \qquad \alpha(G) \le \big( \sqrt{2} + o(1) \big) \sqrt{n \log n},$$
which immediately implies that 
\begin{equation}\label{eq:TFP:R3k}
R(3,k) \ge \bigg( \frac{1}{4} + o(1) \bigg) \frac{k^2}{\log k}
\end{equation}
as $k \to \infty$. The proofs of this result in~\cite{BK} and~\cite{FGM} are \emph{extremely} complicated, involving the careful control of several large families of random variables that interact with one another in complex ways. Fortunately, as we will see below, there turns out to be a much simpler way to prove even stronger lower bounds on $R(3,k)$.



\subsection{Starting with a blow-up of $G(n,p)$} 

The factor of $4$ separating~\eqref{eq:TFP:R3k} from Shearer's upper bound is really two factors of $2$: one coming from the lack of progress on Problem~\ref{prob:Shearer:strong}, and the other  from the fact that the graph $G$ given by the triangle-free process satisfies
$$\alpha(G) = \big( 2 + o(1) \big) d(G),$$
that is, the largest independent sets are twice as large as the neighbourhood of a vertex. This therefore leaves open the possibility that there could exist a denser triangle-free graph $G$ that still satisfies $\alpha(G) \sim \alpha\big( G(n,p) \big)$ (with $p$ equal to the density of $G$). However, for more than a decade no-one was able to construct such a graph, and the authors of~\cite{FGM} even conjectured 
that no such graph exists.

\pagebreak

This barrier was finally overcome earlier this year, by Campos, Jenssen, Michelen and Sahasrabudhe~\cite{CJMS25}, who showed that adding a `seed step' to the triangle-free process can produce a denser but still highly random-like triangle-free graph. More precisely, they considered a blow-up of the random graph $G(n/s,p)$, with $s = (\log n)^2$ and $p = \sqrt{ \frac{\log n}{6n}}$, meaning that each vertex is replaced by an independent set of size $s$, and each edge is replaced by a complete bipartite graph. Note that $G(n/s,p)$ has fewer triangles than edges, and it is therefore not difficult to remove them without destroying the pseudorandom properties of the random graph. They then use an elegant variant of Kim's nibble to add random edges, producing a final triangle-free graph $G$ with 
$$d(G) = \bigg( \frac{\sqrt{2}}{\sqrt{3}} + o(1) \bigg) \sqrt{n \log n} \qquad \text{and} \qquad \alpha(G) \le \bigg( \frac{\sqrt{3}}{\sqrt{2}} + o(1) \bigg) \sqrt{n \log n},$$
which implies that 
\begin{equation}\label{eq:CJMS:R3k}
R(3,k) \ge \bigg( \frac{1}{3} + o(1) \bigg) \frac{k^2}{\log k}
\end{equation}
as $k \to \infty$. In order to control Kim's nibble until its asymptotic end, they added a new `regularization step' between each nibble step, which allowed them to \emph{dramatically} simplify the analysis of the process. The idea of adding such a regularization steps to a nibble process goes back to the work of Alon, Kim and Spencer~\cite{AKS97} in the 1990s, but the method has recently been rediscovered by several authors (see, e.g.,~\cite{CJMS23,GKLO,MPS}), and is quickly developing into a key part of the toolkit of probabilistic combinatorics.

\subsection{The Alon--R\"odl method}\label{AlonRodl:sec}

Before describing the construction that proves the lower bound in Theorem~\ref{thm:R3k}, we need to mention one more key technique for proving lower bounds on Ramsey numbers, which was introduced by Alon and R\"odl~\cite{AR} in 2005. To do so, we will take a slight detour into the world of multicolour off-diagonal Ramsey numbers. 

The Ramsey number $R(3,3,k)$ is the smallest $n \in \N$ such that every red-blue-green colouring of the edges of $K_n$ contains either a red triangle, a blue triangle, or a green copy of $K_k$. It follows from the approach of Erd\H{o}s and Szekeres that $R(3,3,k) \le k^3$, and using the method of Ajtai, Komlós and Szemerédi~\cite{AKSz} this can be improved to 
\begin{equation}\label{eq:R33k:upper:bound}
R(3,3,k) \le \frac{Ck^3}{(\log k)^2}
\end{equation}
for some constant $C > 0$. However, until the work of Alon and R\"odl, it wasn't known whether or not $R(3,3,k) \gg R(3,k)$. Their simple, beautiful, and surprisingly powerful idea is illustrated by the following lemma. 

\begin{lemma}[Alon and R\"odl, 2005] \label{lem:AR:33k}
If there exists a triangle-free graph $G$ with $n$ vertices and fewer than $\sqrt{{n \choose k}}$ independent sets of size $k$, then $R(3,3,k) > n$. 
\end{lemma}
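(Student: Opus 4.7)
The plan is to use $G$, together with a well-chosen permutation $\pi$ of its vertex set, to build a three-colouring of $K_n$ in which the red and blue graphs are each isomorphic to a subgraph of $G$, so that triangle-freeness in those two colour classes is automatic; the only real work is then a short first-moment calculation showing that some $\pi$ kills every potential green $K_k$.

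Concretely, I would identify $V(K_n)$ with $V(G)$, fix a permutation $\pi$ of $V(G)$, and colour each edge $uv$ of $K_n$ red if $uv \in E(G)$, blue if $uv \notin E(G)$ but $\pi(u)\pi(v) \in E(G)$, and green otherwise. The red graph is then exactly $G$, while the blue graph is contained in the isomorphic copy $\{uv : \pi(u)\pi(v) \in E(G)\}$ of $G$, so both colour classes are triangle-free regardless of the choice of $\pi$. On the other hand, a green $K_k$ on a $k$-set $S \subset V(G)$ would force every pair $u,v \in S$ to satisfy both $uv \notin E(G)$ and $\pi(u)\pi(v) \notin E(G)$, i.e., both $S$ and $\pi(S)$ would have to be independent sets of size $k$ in $G$.

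Write $\cI$ for the family of independent $k$-sets of $G$, so that by hypothesis $|\cI| < \sqrt{{n \choose k}}$. Choosing $\pi$ uniformly at random among the permutations of $V(G)$, the image $\pi(S)$ of any fixed $k$-subset $S$ is uniformly distributed over the $k$-subsets of $V(G)$, and therefore
$$\Ex\big[ \, \#\{S \in \cI : \pi(S) \in \cI\} \, \big] \;=\; \sum_{S \in \cI} \frac{|\cI|}{{n \choose k}} \;=\; \frac{|\cI|^2}{{n \choose k}} \;<\; 1.$$
Hence there exists a permutation $\pi$ with no $S \in \cI$ satisfying $\pi(S) \in \cI$, and the associated three-colouring of $K_n$ contains no red or blue triangle and no green $K_k$, witnessing $R(3,3,k) > n$.

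There is no genuinely hard step here; the only point requiring a little care is the mild asymmetry built into the definition (a pair $uv$ lying in both $E(G)$ and in $\{uv : \pi(u)\pi(v) \in E(G)\}$ is coloured red, never blue), but this tie-breaking neither spoils the triangle-freeness of the blue graph nor alters the characterisation of green $K_k$'s above, so the first moment bound on $|\cI|^2/{n \choose k}$ really is all one needs.
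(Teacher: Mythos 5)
Your proof is correct and follows essentially the same route as the paper: randomly superimpose (a copy of) $G$ on itself, observe that a green $K_k$ would have to be independent in both copies, and apply a first-moment bound giving $|\cI|^2/\binom{n}{k} < 1$. The only cosmetic difference is that you fix one copy of $G$ and randomly permute only the second, whereas the paper randomly permutes both; by composing permutations these are equivalent, and your write-up is, if anything, slightly more explicit about how the three-colouring is assembled and why the red/blue tie-break is harmless.
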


\begin{proof}
To prove the lemma we simply take two \emph{random} copies $G_R$ and $G_B$ of the graph $G$ (that is, we take independent random permutations of the vertex set), and count the expected number of independent sets of size $k$ in their union. Note that a set is independent in $G_R \cup G_B$ if and only if it is independent in both $G_R$ and $G_B$. Therefore the expected number of independent $k$-sets in the graph $G_R \cup G_B$ is less than $1$, and hence there exists a pair of permutations such that $\alpha(G_R \cup G_B) < k$, as required.
\end{proof}

Applying this lemma to a blow-up of an explicit optimally-pseudorandom triangle-free graph, constructed by Alon~\cite{A94} in 1994, and using a clever counting argument to bound the number of independent sets of size $k$ (see Lemma~\ref{lem:AR:counting}), they obtained the bound
\begin{equation}\label{eq:R33k:lower:bound}
R(3,3,k) \ge \frac{ck^3}{(\log k)^{4}}
\end{equation}
for some constant $c > 0$. They also used their technique to prove tight lower bounds on many other multicolour off-diagonal Ramsey numbers. 

\subsection{The Hefty--Horn--King--Pfender construction: two random blow-ups} 

We are finally ready to describe the colouring which proves the lower bound in Theorem~\ref{thm:R3k}. This construction was discovered very recently by Hefty, Horn, King and Pfender~\cite{HHKP}, who were inspired by the proofs of~\eqref{eq:CJMS:R3k} and~\eqref{eq:R33k:lower:bound} to ask the following (in hindsight) very natural question: what if we replace the nibble phase by another random blow-up of $G(n/s,p)$? 


To be slightly more precise, let us construct a graph\footnote{This is not exactly the same as the construction in~\cite{HHKP}, but it is simpler to understand and has very similar properties. Even more recent applications of similar constructions can be found in~\cite{CJMPS} and~\cite{KSSW}.} in the following way: 
\begin{itemize}
\item[1.] Let $H_1$ and $H_2$ be independent copies of $G(n/s,p)$, where $s = (\log n)^2$ and $p = \sqrt{ \frac{\log n}{4n}}$.\smallskip
\item[2.] Remove an edge from each triangle in $H_i$ to form a triangle-free graph $H_i'$. \smallskip  
\item[3.] Blow-up $H_1'$ and $H_2'$ to form triangle-free graphs $G_1$ and $G_2$ with $n$ vertices, choose a random bijection between their vertex sets, and consider their union $G_1 \cup G_2$.\smallskip   
\item[4.] Remove an edge from each triangle in $G_1 \cup G_2$ to form a triangle-free graph $G$.\smallskip   
\end{itemize}

Note that, due to the blowing-up, each of $G_1$ and $G_2$ has independent sets that are much larger than $k \approx \sqrt{n \log n}$. However, and crucially, there are \emph{very few} such independent sets, since they have many pairs of vertices in the same part of the blow-up. If we can show that there are fewer than $\sqrt{{n \choose k}}$ such sets, then we can use the idea of Alon and R\"odl to show that (with positive probability) none of these sets survive in $G_1 \cup G_2$. 

To see that there is some hope of this working, observe that the expected number of independent $k$-sets in $G(n,p)$ is this small as long as (roughly) $2k > \alpha(G(n,p))$, which is (just barely) the case for our choice of parameters. Therefore, if we ignore coincidences (pairs of vertices in the same part of the blow-up) and the edge-removal steps, then we would be in good shape. Moreover, neither coincidences nor edge-removal from $H_1$ and $H_2$ turn out to be significant problems, since $s$ is fairly small and $H_1$ and $H_2$ contain very few triangles. 

The main issue is therefore to deal with triangles in $G_1 \cup G_2$. Unfortunately there are likely to be many such triangles, roughly $p^3n^3 \approx pn^2 \log n$, which is too many for a naive edge-deletion argument to work. Fortunately, however, they come in batches: if an edge of $G_1$ forms a triangle with two edges of $G_2$, then removing it will destroy not only that triangle, but $s = (\log n)^2$ other triangles! Using this fact, the authors of~\cite{HHKP} are able to destroy the triangles of $G_1 \cup G_2$ without significantly increasing the independence number, and hence show that 
$$d(G) = \big( 1 + o(1) \big) \sqrt{n \log n} \qquad \text{and} \qquad \alpha(G) \le \big( 1 + o(1) \big) \sqrt{n \log n}.$$
This implies that 
\begin{equation}\label{eq:HHKP:R3k}
R(3,k) \ge \bigg( \frac{1}{2} + o(1) \bigg) \frac{k^2}{\log k}
\end{equation}
as $k \to \infty$, and therefore completes the (sketch) proof of Theorem~\ref{thm:R3k}. 



\section{A lower bound on $R(4,k)$}\label{sec:R4k}

In this section we will outline the proof of the lower bound in Theorem~\ref{thm:R4k}. 
For the reader's convenience, we restate the bound here. 

\begin{theorem}[Mattheus and Verstraete, 2024]\label{thm:R4k:again}
There exists a constant $c > 0$ such that
$$R(4,k) \ge \frac{ck^3}{(\log k)^4}$$ 
for all sufficiently large $k \in \N$. 
\end{theorem}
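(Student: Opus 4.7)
The plan is to follow the Mattheus--Verstraete construction as sketched in the excerpt. Let $\mathcal{U} = \mathcal{U}(q)$ denote the Hermitian unital of order $q$: a design on $n_0 = q^3 + 1$ points together with a family $\cB$ of $q^2(q^2 - q + 1) = \Theta(q^4)$ blocks of size $q + 1$, with the property that every pair of points lies in a unique block. The essential structural input, due to O'Nan~\cite{ONan}, is that every $K_4$ inside the complete graph on the point set $V_0$ has three vertices lying in a common block of $\cB$.

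The construction proceeds in two independent random steps. First, for each block $L \in \cB$, sample a uniformly random balanced bipartition $L = L^+ \sqcup L^-$, and let $G_0$ be the graph on $V_0$ with edge set $\bigcup_{L \in \cB} E(L^+, L^-)$. Because any two points lie in a \emph{unique} block, if $\{a,b,c,d\}$ induced a $K_4$ in $G_0$ then, by O'Nan, three of them would lie in a common block $L$; but $G_0[L]$ is bipartite, so no such triangle can survive, and $G_0$ is therefore $K_4$-free almost surely. Second, include each vertex of $V_0$ in a random set $V$ independently with probability $\rho = \Theta(n_0^{-1/4})$, and take $G := G_0[V]$ as the final graph, of size $N := \rho n_0 \asymp n_0^{3/4}$.

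The goal is to show that $\alpha(G) < k$ with positive probability for some $k \asymp N^{1/3} (\log N)^{4/3}$, which will give $R(4,k) > N \ge c k^3 / (\log k)^4$ as required. The argument is a first-moment union bound. For a fixed $k$-set $S \subset V_0$, write $m_L := |S \cap L|$ for its line profile; then $S$ is independent in $G_0$ iff for every block $L$ the intersection $S \cap L$ lies entirely in one side of the random bipartition of $L$. Since bipartitions are chosen independently across blocks,
\[
\Pr\bigl[ S \text{ independent in } G_0 \bigr] \;\asymp\; \prod_{L: m_L \ge 2} 2^{1 - m_L} \;=\; 2^{-T(S)}, \qquad T(S) := \sum_{L: m_L \ge 2} (m_L - 1).
\]
Using the Steiner identity $\sum_L \binom{m_L}{2} = \binom{k}{2}$, one can rewrite $T(S) = \binom{k}{2} - \sum_{L: m_L \ge 3}\binom{m_L - 1}{2}$, so that generic $k$-sets (no three points collinear) achieve the maximum $T(S) = \binom{k}{2}$, while $k$-sets concentrated along a small family of blocks have much smaller $T(S)$ and are therefore much more likely to be independent.

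The main obstacle will be organising the union bound
\[
\Ex\pigl[ \# \{\text{independent $k$-sets in } G\} \pigr] \;\le\; \rho^{k} \sum_{\substack{S \subset V_0 \\ |S|=k}} 2^{-T(S)} \;=\; o(1)
\]
by stratifying $k$-subsets of $\mathcal{U}$ according to how concentrated they are along blocks of $\cB$. The key combinatorial input, in the spirit of the Alon--R\"odl counting lemma used to prove~\eqref{eq:R33k:lower:bound}, is a sharp estimate for the number of $k$-subsets with a given line-profile type, exploiting both the Steiner property (every pair in exactly one block) and the vertex-regularity (every point in $q^2$ blocks) of $\mathcal{U}(q)$. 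Balancing the $\rho^k$ sampling factor against the $2^{-T(S)}$ pseudorandomness factor along the dominant type pins down the threshold at $k \asymp N^{1/3}(\log N)^{4/3}$; the remaining concentration steps (that $|V|$ is close to $N$, and that the degree sequence of $G_0$ is well-behaved) are standard Chernoff/Janson estimates and should not pose serious difficulty.
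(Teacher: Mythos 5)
Your proposal dualizes the Mattheus--Verstraete construction in a way that breaks the key $K_4$-freeness property, and this gap is fatal. In the paper, the vertex set of the base graph $H$ is the set of $\Theta(q^4)$ secant \emph{lines} of $\mathrm{PG}(2,q^2)$, two lines being adjacent when they meet in a point of the unital $U$; the edge-disjoint cliques are the pencils of $q^2$ lines through each of the $q^3+1$ points of $U$. O'Nan's theorem says precisely that there is no ``O'Nan configuration'' of four lines with six distinct pairwise intersection points in $U$ and no three concurrent; translated, every $K_4$ in $H$ has three vertices (lines) in a common pencil. Your version instead takes the $n_0 = q^3+1$ \emph{points} of $U$ as vertices and the $\Theta(q^4)$ blocks (lines) of the design as the cliques, and asserts that O'Nan's theorem implies every four points have three collinear. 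That is false: O'Nan's theorem is a statement about four \emph{lines} (equivalently, a configuration of six points), not about four points, and in a $2$-$(q^3+1,q+1,1)$ Steiner system a generic quadruple of points has no three collinear. Concretely, for four points $a,b,c,d$ in general position the six blocks $L_{ab},\dots,L_{cd}$ are distinct, the bipartitions are independent, and $\{a,b,c,d\}$ forms a $K_4$ in your $G_0$ with probability about $2^{-6}$; so $G_0$ has $\Theta(q^{12})$ copies of $K_4$, and even after sampling with $\rho \approx q^{-3/4}$ the expected number of surviving $K_4$'s is $\Theta(q^9) \gg N \approx q^{9/4}$. The construction is therefore not $K_4$-free and cannot be repaired by deleting a vertex per copy.

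A secondary (now moot) remark: even in the correct (lines-as-vertices) setting, Mattheus and Verstraete do not count independent $k$-sets by a direct product-over-cliques first-moment bound, which is hard to control because the cliques of size $q^2$ overlap in a vertex-heavy way. They instead prove a supersaturation lemma (Lemma~\ref{lem:R4k:supersaturation}) for $H'$ and feed it into the Kleitman--Winston graph container lemma (Lemma~\ref{lem:graph:containers}) to get the bound $\binom{q^4}{Cq\log q}\binom{Cq^2}{k}$ on the number of independent $k$-sets (Lemma~\ref{lem:R4k:counting:ksets}); the random subsample and the choice $k = q(\log q)^3$ then close the argument. If you want to pursue a first-moment route you would need, at minimum, to redo the counting in the lines-as-vertices model and to justify that the overlaps among pencils do not spoil the estimate, which is exactly the role the container/supersaturation machinery plays in the paper.
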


As we mentioned in the introduction, the proof relies on the existence of a certain algebraic object, called the Hermitian unital. This object provides us with the graph that forms the starting point of the Mattheus--Verstraete construction (see~\cite[Proposition~2]{MaV}).

\begin{lemma}\label{lem:Hermitian:unital}
For every prime $q$, there exists a graph $H$ with $n = \Theta(q^4)$ vertices that has the following properties:
\begin{itemize}
\item[$(a)$] $H$ is $d$-regular for some $d = \Theta(q^3)$.\smallskip
\item[$(b)$] $E(H)$ is the union 
of\/ $\Theta(q^3)$ edge-disjoint cliques of size $\Theta(q^2)$. \smallskip
\item[$(c)$] Every copy of $K_4$ in $H$ intersects one of these cliques 
in at least three vertices.
\end{itemize}
\end{lemma}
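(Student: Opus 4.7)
The plan is to construct the graph $H$ directly from the Hermitian unital in $PG(2, q^2)$. Recall that the Hermitian unital $U$ is the $2$-design whose points are the $\mathbb{F}_{q^2}$-points of the Hermitian curve $x^{q+1} + y^{q+1} + z^{q+1} = 0$, and whose blocks (``lines'') are the non-tangent secant intersections with the curve. The standard parameters are that $U$ has $q^3 + 1$ points, each line contains $q+1$ points, through every point there pass $q^2$ lines, and every pair of points lies on a unique line. In particular, the total number of lines is $\binom{q^3+1}{2} / \binom{q+1}{2} = q^2(q^2 - q + 1) = \Theta(q^4)$, and this is how we arrive at the correct value of $n$.

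I would let $V(H)$ be the set of lines of $U$, joining two lines whenever they share a point (necessarily unique, since $\lambda = 1$). Properties (a) and (b) follow directly from the parameters. For (a), a line $\ell$ contains $q+1$ points, each lying on $q^2 - 1$ other lines, and these $(q+1)(q^2-1)$ neighbours are all distinct (if some $\ell' \neq \ell$ met $\ell$ in two points, it would coincide with $\ell$), so $H$ is $d$-regular with $d = \Theta(q^3)$. For (b), for each of the $q^3 + 1 = \Theta(q^3)$ points $P \in U$, the set $\cL_P$ of $q^2$ lines through $P$ forms a clique, and every edge of $H$ lies in \emph{exactly} one such clique, namely the one indexed by the unique common point of its endpoints, so the cliques cover $E(H)$ and are pairwise edge-disjoint.

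The real content is property (c), for which I would invoke O'Nan's classical theorem that the Hermitian unital contains no \emph{O'Nan configuration}, i.e., no four distinct lines whose six pairwise intersection points are all distinct. Given a $K_4$ in $H$, namely four pairwise intersecting lines $\ell_1, \ldots, \ell_4$, O'Nan's theorem forces two of the six intersection points to coincide. A brief case analysis then shows that at least three of the $\ell_i$ pass through the coincidence point $P$: if two disjoint pairs coincide, say $\ell_1 \cap \ell_2 = \ell_3 \cap \ell_4 = P$, then $P$ lies on all four lines; if two overlapping pairs coincide, say $\ell_1 \cap \ell_2 = \ell_2 \cap \ell_3 = P$, then $\ell_1, \ell_2, \ell_3$ all meet at $P$. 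Either way, the $K_4$ meets $\cL_P$ in at least three vertices, which is exactly (c).

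The only genuine obstacle is the appeal to O'Nan's theorem; everything else is a direct reading of the design parameters of $U$. A self-contained proof of the no-O'Nan-configuration property would require a coordinate computation on the Hermitian curve exploiting the action of the unitary group, but I would simply cite O'Nan's 1972 result and let it do the work.
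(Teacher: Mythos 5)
Your construction is identical to the paper's (vertices are the secant lines of the Hermitian curve in $PG(2,q^2)$, adjacency is meeting on the curve, cliques are pencils through a unital point), your parameter counts match, and like the paper you cite O'Nan's theorem for the absence of O'Nan configurations. The only thing you add is the short case analysis turning ``two of the six intersection points coincide'' into ``some clique $\cL_P$ contains at least three of the four lines''; this is correct and complete, and is the step the paper leaves implicit when it simply cites O'Nan and \cite[Prop.~1]{MaV}.
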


\begin{proof}[Sketch of the proof] 
Let $q$ be a prime, and define $U$ (the Hermitian unital) to be the set of all 1-dimensional subspaces of $(\F_{q^2})^3$ that are spanned by a point $(x,y,z)$ satisfying
$$x^{q+1} + y^{q+1} + z^{q+1} = 0.$$
The vertices of $H$ are the lines in the projective plane $\text{PG}(2,q^2)$ that intersect $U$ in exactly $q+1$ points (there are exactly $q^4 - q^3 + q^2$ such lines), and two lines form an edge if they intersect in a point of $U$. Thus, for each element $u \in U$, we have a clique corresponding to the $q^2$ lines passing through $u$, and these cliques are edge-disjoint, since pairs of lines intersect in at most one point. Moreover, there are $q^3 + 1$ cliques, and each vertex of $H$ is contained in exactly $q+1$ of them. Property $(c)$ was proved by O'Nan~\cite{ONan} in 1972; for a short proof, see~\cite[Proposition~1]{MaV}.
\end{proof}

\pagebreak

Given this object, the first step is to destroy all of the copies of $K_4$. By property~$(c)$, this can be done simply by replacing each clique by a triangle-free graph; for example, a complete bipartite graph. Let $\cA$ be the family of edge-disjoint cliques of size $\Theta(q^2)$ given by Lemma~\ref{lem:Hermitian:unital} that partition the edge set of $H$, and let $H'$ be the (random) graph obtained from $H$ by replacing each clique of $\cA$ by a random complete bipartite graph.\footnote{This idea was apparently first introduced by Brown and R\"odl~\cite{BR91} in 1991, and has been applied or rediscovered several times by various different sets of authors, see, e.g.,~\cite{C17,DR11,GJ20,KMV}.}

We now have a $K_4$-free graph, but unfortunately $H'$ has large independent sets (for example, the parts of each complete bipartite graph, which have size $\Theta(n^{1/2})$). However, as in the previous section, what really matters is that there are \emph{not too many} large independent sets, and this will allow us to destroy them using a probabilistic argument. More precisely, we will consider a random subset $S \subset V(H')$ of the vertices, and show that the expected number of independent sets of size $k$ that are contained in $S$ is less than $1$. 

To make this argument work, we need a fairly good bound on the number of independent sets of size $k$ in $H'$. Mattheus and Verstraete proved the following bound with $C = 2^{30}$. 

\begin{lemma}\label{lem:R4k:counting:ksets}
With high probability $H'$ has at most
$${q^4 \choose Cq \log q} {Cq^2 \choose k}$$ 
independent sets of size $k$. 
\end{lemma}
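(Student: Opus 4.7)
The plan is a greedy-encoding / union-bound argument. For each independent $k$-set $I \subseteq V(H')$, I would define a canonical \emph{kernel} $W = W(I) \subseteq I$ of size $Cq\log q$, determined by $I$ and by the graph $H'$, and show that the associated \emph{compatible set}
\[
V(W) \;:=\; \bigl\{ v \in V(H') \setminus W \,:\, W \cup \{v\} \text{ is independent in } H' \bigr\}
\]
has $|V(W)| \le Cq^2$. Since $I \setminus W \subseteq V(W)$, the map $I \mapsto \bigl(W(I),\, I \setminus W(I)\bigr)$ is an injection from the family of independent $k$-sets of $H'$ into pairs $(W,J)$ with $W$ a $(Cq\log q)$-subset of $V(H')$ and $J \subseteq V(W)$ of size $k - Cq\log q$. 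The total number of such pairs is at most $\binom{q^4}{Cq\log q}\binom{Cq^2}{k}$, yielding the claimed bound.

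I would construct $W(I)$ greedily: starting from $W = \emptyset$, at each step add the vertex $v \in I \setminus W$ that maximises the number of cliques $A \in \cA$ with $v \in A$ and $W \cap A = \emptyset$, stopping after $Cq\log q$ steps. By Lemma~\ref{lem:Hermitian:unital}(a)--(b), each vertex lies in exactly $q+1$ cliques of $\cA$, and any two cliques of $\cA$ meet in at most one point (since they are edge-disjoint). A standard greedy-cover estimate then shows that after the procedure, the quantity
\[
m(v) \;:=\; \bigl|\{ A \in \cA : v \in A,\ W \cap A \neq \emptyset \}\bigr|
\]
satisfies $m(v) \ge C'\log q$ for all but $o(q^2)$ vertices $v \in V(H')$, where $C' = C'(C)$ can be made arbitrarily large by choosing $C$ large enough.

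The key probabilistic step is the bound $|V(W)| \le Cq^2$. For a fixed $W$, a vertex $v$ lies in $V(W)$ iff in every clique $A$ with $v \in A$ and $W \cap A \neq \emptyset$ the random bipartition of $A$ places $v$ on the same side as $W \cap A$. Because distinct cliques of $\cA$ share at most one vertex, the $m(v)$ bipartitions constraining $v$ are mutually independent, so $\Pr\bigl[v \in V(W)\bigr] \le 2^{1 - m(v)}$. Combined with $m(v) \ge C'\log q$ for almost all $v$, this yields $\Ex\bigl[|V(W)|\bigr] \le q^4 \cdot 2^{1 - C'\log q} \le Cq^2/2$ for sufficiently large $C'$. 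The main obstacle is to upgrade this expectation bound into a uniform high-probability bound over the $\binom{q^4}{Cq\log q} \le q^{5Cq\log q}$ candidate kernels. For this I would apply a sharp concentration inequality (Bernstein, or a weakly-dependent Chernoff-type bound) to the sum $\sum_v \1[v \in V(W)]$, exploiting that two vertices $v,v'$ share a clique through $W$ only rarely, again thanks to the edge-disjointness of $\cA$. Choosing $C$ large enough then makes the per-$W$ failure probability much smaller than $q^{-5Cq\log q}$, closing the union bound.
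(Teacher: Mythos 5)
Your proposal takes a genuinely different route from the paper, but I believe it contains a gap that is not easily repaired.

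The paper does not construct a fingerprint by hand. Instead, Mattheus and Verstraete first prove a \emph{supersaturation} statement (Lemma~\ref{lem:R4k:supersaturation}): with high probability every set $U \subseteq V(H')$ of size at least $Cq^2$ spans at least $|U|^2/(2^{10}q)$ edges of $H'$. This is proved by a martingale/concentration argument over the random bipartitions, using the clique structure from Lemma~\ref{lem:Hermitian:unital}. The counting bound then follows deterministically from the Kleitman--Winston graph container lemma (Lemma~\ref{lem:graph:containers}), applied with $R = Cq^2$ and $s = 2^{12}q\log q$. The crucial point is that the fingerprint in that lemma is built by repeatedly selecting the vertex of \emph{maximum degree in the current (shrinking) container}; together with the supersaturation hypothesis this forces the container to shrink geometrically after each fingerprint vertex, regardless of what the independent set $I$ looks like.

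The gap in your argument is the step asserting that ``a standard greedy-cover estimate shows $m(v) \ge C'\log q$ for all but $o(q^2)$ vertices $v$.'' This is not a standard estimate and I do not think it holds. Your greedy rule optimises the \emph{total} number of cliques of $\cA$ newly hit at each step; it gives no control over the per-vertex quantity $m(v)$, which is a pointwise lower bound over almost all of $V(H')$, a much stronger requirement. To see the danger, consider an $I$ contained entirely in one part of the bipartition of a single clique $A_0 \in \cA$ (such $I$ exist, since each part has size $\Theta(q^2) \gg k$). Then $W \subseteq A_0$, so every clique hit by $W$ either equals $A_0$ or passes through a single vertex of $W$. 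Whether another vertex $v$ has $m(v) \ge C'\log q$ then depends on whether the set of points of the unital covered by the lines of $W$ is ``well spread'' relative to $v$ --- an explicit geometric condition that the greedy choice does nothing to enforce. The sum $\sum_v m(v)$ has the right order $\Theta(q^4\log q)$, but that only gives a bound on average, and the number of vertices with $m(v) = O(1)$ could a priori be $q^{4-o(1)}$, far larger than the $O(q^2)$ budget your expectation calculation requires. Your later concentration and union-bound steps are fine in spirit (the per-$W$ failure probability can be pushed below $q^{-\Theta(q\log q)}$ when $m(v) \ge C'\log q$ holds pointwise), but they rest on the unproved coverage claim. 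The paper's use of the container lemma with its built-in ``max-degree in the current container'' choice is precisely what sidesteps this difficulty.
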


Before sketching the proof of Lemma~\ref{lem:R4k:counting:ksets}, let us note that it easily allows us to complete the proof of Theorem~\ref{thm:R4k:again}. 
To do so, set $p = q^{-1}$, and let $S$ be a $p$-random subset of $V(H')$, meaning that each vertex is included in $S$ independently at random with probability~$p$. Then $G = H'[S]$ is a $K_4$-free graph with roughly $pn = \Theta(q^3)$ vertices, and the expected number of independent sets of size $k = q \, (\log q)^3$ in $G$ is at most
$$p^k {q^4 \choose Cq \log q} {Cq^2 \choose k} \le \bigg( \frac{pq}{(\log q)^2} \bigg)^k \to 0$$
as $q \to \infty$. It follows that there exists a graph $G$ with 
$$v(G) \ge \frac{ck^3}{(\log k)^9} \qquad \text{and} \qquad \alpha(G) < k$$
for some constant $c > 0$. With a little more care, this argument can easily be tightened to give the lower bound on $R(4,k)$ stated in Theorem~\ref{thm:R4k:again}. 

Mattheus and Verstraete prove Lemma~\ref{lem:R4k:counting:ksets} using the method of graph containers, which was introduced in 1982 by Kleitman and Winston~\cite{KW} in order to count the number of $C_4$-free graphs on $n$ vertices, and later developed much further by Sapozhenko~\cite{Sap}, see the survey by Samotij~\cite{Sam}. Very roughly, the container method says that the independent sets in graphs are (typically) clustered together, and can be covered by a relatively small number of sparse sets. More precisely, we have the following lemma (see~\cite[Lemma~1]{Sam}).

\pagebreak

\begin{lemma}\label{lem:graph:containers}
Let $G$ be a graph. If $\,\beta > 0$ and $R,s,k \in \N$ with $s \le k$ are such that
\begin{equation}\label{eq:graph:containers:conditions}
R \ge e^{-\beta s} n \qquad \text{and} \qquad e(G[U]) \ge \beta |U|^2
\end{equation}
for every set $U \subset V(G)$ with $|U| \ge R$, then $G$ has at most
$${n \choose s} {R \choose k - s}$$
independent sets of size $k$. 
\end{lemma}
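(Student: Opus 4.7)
The plan is to use the Kleitman--Winston algorithm to build an injection from independent sets of size $k$ into pairs $(F,T)$, where $F$ is a small ``fingerprint'' of $I$ (at most $s$ vertices) and $T$ is a subset of a ``container'' of size at most $R$ that is determined by $F$ alone. Fix an arbitrary total order on $V(G)$. Given an independent set $I$ of size $k$, set $A_0 = V(G)$ and $F_0 = \emptyset$, and for $i = 1, \dots, s$ proceed as follows: let $v_i$ be the vertex of maximum degree in $G[A_{i-1}]$ (breaking ties by the fixed order); if $v_i \in I$, put $F_i = F_{i-1} \cup \{v_i\}$ and $A_i = A_{i-1} \setminus (\{v_i\} \cup N_{G}(v_i))$, while if $v_i \notin I$ put $F_i = F_{i-1}$ and $A_i = A_{i-1} \setminus \{v_i\}$. (If $A_{i-1}$ becomes empty along the way, simply stop.) Set $F = F_s \subseteq I$ and $C = A_s$. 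Since whenever $v_i \in I$ we only discard non-neighbours, $I \subseteq F \cup C$ throughout.

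The crucial property is that $C$ is determined by $F$ alone (together with $G$ and the fixed vertex order): given $F$, we can replay the algorithm, at each step computing the max-degree vertex $v_i$ deterministically from $A_{i-1}$ and then checking whether $v_i \in F$ to decide how to update $A_{i-1}$. Thus each independent $k$-set $I$ injects into a pair $(F, I \setminus F)$ where $F$ is a subset of $V(G)$ of size at most $s$ and $I \setminus F$ is a subset of $C(F)$.

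The heart of the argument is the bound $|C| \le R$, which uses the density hypothesis. Suppose for contradiction that $|A_i| \ge R$ for all $i \le s$. Then $e(G[A_{i-1}]) \ge \beta |A_{i-1}|^2$ by hypothesis, so the maximum degree inside $A_{i-1}$ is at least $2 e(G[A_{i-1}])/|A_{i-1}| \ge 2\beta |A_{i-1}|$. Hence at each step on which $v_i \in I$, the size of $A$ shrinks by a multiplicative factor of at most $1 - 2\beta$, while on the other steps it does not grow. Since $I$ contributes at least some number of $v_i \in I$ steps before the algorithm would be forced to stop, after $s$ iterations we obtain
$$|A_s| \,\le\, (1 - 2\beta)^{s} n \,\le\, e^{-2\beta s} n \,\le\, e^{-\beta s} n \,\le\, R,$$
contradicting our assumption. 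Thus $|A_i| \le R$ for some $i \le s$, and in particular $|C| \le R$.

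Finally I count: since $F$ determines $C$, and $I \subseteq F \sqcup C$ with $|F| \le s$ and $|I \setminus F| = k - |F|$, padding any $F$ of size less than $s$ by $s - |F|$ arbitrary vertices of $V(G)$ (absorbing these into the choice of $F$ while leaving the remaining $k - s$ elements of $I$ to be chosen from $C$) yields an injection from independent $k$-sets into pairs consisting of an $s$-subset of $V(G)$ and a $(k-s)$-subset of the associated container. This gives the claimed bound $\binom{n}{s}\binom{R}{k-s}$. The main obstacle in this plan is verifying the determinism of the algorithm --- that is, confirming that the entire sequence $(v_i, A_i)_{i \le s}$ is recoverable from $F$ alone --- together with the slightly delicate bookkeeping needed to reduce to the case $|F| = s$ in the counting step; both are routine once the invariants above are in place.
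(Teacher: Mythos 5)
There is a genuine gap, and it is precisely at the point where you bound the container size. Your algorithm runs for exactly $s$ rounds $i=1,\dots,s$, but only the rounds with $v_i\in I$ shrink $A$ multiplicatively: on a round with $v_i\notin I$ you delete a single vertex, so $|A_i| = |A_{i-1}|-1$, which is \emph{not} a factor of $1-2\beta$. Consequently, after $s$ rounds you only get
$$|A_s|\;\le\;(1-2\beta)^{\,|F|}\,\bigl(n-(s-|F|)\bigr),$$
where $|F|$ is the number of rounds with $v_i\in I$. Nothing forces $|F|$ to equal $s$; in fact the max-degree vertex of $A_{i-1}$ typically lies outside a fixed independent set $I$, so $|F|$ can be tiny. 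The sentence ``Since $I$ contributes at least some number of $v_i\in I$ steps \dots\ after $s$ iterations we obtain $|A_s|\le(1-2\beta)^s n$'' does not follow: you have not shown there are $s$ fingerprint steps among the first $s$ iterations, and in general there are not.

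The fix --- and this is what the paper's sketch does (``after adding $s$ elements to the fingerprint, the container will have size at most $R$'') --- is to iterate until $|F|=s$ rather than for $s$ rounds. The loop may then run for far more than $s$ iterations, but that is harmless: the non-fingerprint rounds only shrink $A$, and the $s$ fingerprint rounds each shrink it by a factor $1-2\beta$ (valid so long as $|A|\ge R$ before that round, which is the only case of interest). Chaining these gives $|A|\le(1-2\beta)^s n\le e^{-\beta s}n\le R$ once $|F|=s$. Note also that since $I\setminus F\subseteq A$ throughout and $|I|=k\ge s$, the algorithm cannot exhaust $A$ before $|F|$ reaches $s$, so $|F|=s$ exactly and your ``padding'' bookkeeping at the end is not even needed: every independent $k$-set is encoded by an $s$-subset $F$ of $V(G)$ together with a $(k-s)$-subset of the set $C(F)$ of size at most $R$, giving $\binom{n}{s}\binom{R}{k-s}$ directly. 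Your determinism argument (replaying the algorithm using membership in $F$ in place of membership in $I$) is fine as stated and carries over unchanged to the corrected termination rule.
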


The proof of Lemma~\ref{lem:graph:containers} is roughly as follows: for each independent set $I$ of $G$, we find a `fingerprint' $S \subset I$ of size $s$, and a corresponding `container' $g(S) \supset I$ of size at most $R$, which only depends on $S$, not on the remainder of $I$. The claimed bound on the number of independent $k$-sets then follows immediately (we have at most ${n \choose s}$ choices for the fingerprint, and choose the remaining elements from the container).

Constructing the fingerprint and container is also surprisingly easy: we repeatedly choose a vertex of maximum degree in (the current candidate for) the container, remove it from the container if it is not in $I$, and otherwise add it to the fingerprint and remove its neighbours from the container. It follows from the assumptions~\eqref{eq:graph:containers:conditions} that after adding $s$ elements to the fingerprint, the container will have size at most $R$, as claimed. 

In order to apply Lemma~\ref{lem:graph:containers}, we need to prove a `supersaturation lemma' for $H'$; that is, we need to show that (with high probability) every large subset of $V(H')$ contains many edges of  $H'$. Mattheus and Verstraete proved the following lemma of this type.

\begin{lemma}\label{lem:R4k:supersaturation}
With high probability, the random graph $H'$ has the following property: 
$$e(G[U]) \ge \frac{|U|^2}{2^{10} q}$$ 
for every set $U \subset V(H')$ of size at least $Cq^2$. 
\end{lemma}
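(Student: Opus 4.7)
The plan is to first establish the corresponding supersaturation estimate for the deterministic graph $H$, and then transfer it to $H'$ by exploiting that the random bipartitions of different cliques are independent.

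\emph{Step 1 (supersaturation for $H$).} First I would fix $U \subset V(H)$ with $|U| = m \ge Cq^2$, set $t_A = |U \cap A|$ for each $A \in \cA$, and write
$$e(H[U]) = \sum_{A \in \cA} \binom{t_A}{2},$$
since the cliques of $\cA$ partition $E(H)$. By Lemma~\ref{lem:Hermitian:unital}, each vertex lies in $q+1$ cliques of $\cA$, so $\sum_A t_A = (q+1)m$, while $|\cA| = \Theta(q^3)$. Applying Jensen's inequality to the convex function $\binom{\cdot}{2}$ yields
$$e(H[U]) \ge |\cA| \binom{(q+1)m/|\cA|}{2} \ge \frac{cm^2}{q}$$
for an absolute constant $c > 0$, provided $C$ is large enough that $(q+1)m/|\cA| = \Theta(m/q^2) \ge 2$.

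\emph{Step 2 (transfer to $H'$).} For each $A \in \cA$ let $A = A^+ \cup A^-$ denote the random bipartition and set $X_A = |U \cap A^+| \cdot |U \cap A^-|$, so that $e(H'[U]) = \sum_A X_A$. Because the bipartitions of different cliques are chosen independently, the variables $X_A$ are independent, with $\Ex X_A = t_A(t_A-1)/4$, and therefore
$$\Ex\big[ e(H'[U]) \big] = \sum_A \Ex X_A = \frac{e(H[U])}{2} \ge \frac{cm^2}{2q}.$$

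\emph{Step 3 (concentration + union bound).} My target for each such $U$ is a tail bound
$$\Pr\bigg[ e(H'[U]) < \frac{m^2}{2^{10}q} \bigg] < \binom{n}{m}^{-1} n^{-2},$$
after which a union bound over $m \ge Cq^2$ and over the subsets of each size will finish the proof. The identity $X_A = t_A^2/4 - (|U \cap A^+| - t_A/2)^2$ reduces the concentration of each $X_A$ to exponential concentration of the binomial $|U \cap A^+|$ around its mean $t_A/2$, and since $0 \le X_A \le t_A^2/4$ with $\sum_A \mathrm{Var}(X_A) \le \sum_A t_A^2/8 = O(q^3 m)$, a Bernstein-type argument on the independent sum $\sum_A X_A$ should yield a usable tail.

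\emph{Main obstacle.} The hard part will be making this concentration strong enough to beat the union-bound factor $\binom{n}{m} = e^{\Omega(m \log q)}$, since a single $X_A$ can be as large as $q^4/4$ while the threshold $m^2/(2^{10}q)$ is only $\Theta(q^3)$ when $m = \Theta(q^2)$; a naive Hoeffding bound is therefore too weak. I would overcome this by splitting the cliques according to whether $t_A \ge c' \log q$ or not. For each of the $O(q^3)$ "large" cliques, Chernoff applied to the binomial $|U \cap A^+|$ shows that $X_A \ge t_A^2/8$ except with probability $e^{-\Omega(t_A)}$, and a union bound handles all of them simultaneously with room to spare; for the "small" cliques, the aggregate $\sum_{t_A < c'\log q} X_A$ has summands bounded by $O((\log q)^2)$ and can be controlled by standard Chernoff. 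Since the small-clique contribution is negligible in expectation once $m \ge Cq^2$, a careful accounting should then produce the required $2^{-\Omega(m \log q)}$ failure probability and close the argument.
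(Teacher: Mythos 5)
Your Steps~1 and~2 are correct: the Jensen bound $e(H[U]) \ge cm^2/q$ follows exactly as you say from $\sum_A t_A = (q+1)m$ and $|\cA| = \Theta(q^3)$, and the expectation $\Ex[e(H'[U])] = e(H[U])/2$ together with independence of the $X_A$'s is the right starting point, in the same spirit as the paper's (martingale) argument. The problem is entirely in Step~3, and the fix you sketch for your ``main obstacle'' does not actually close the gap. Two specific issues. First, the claim that ``the small-clique contribution is negligible in expectation once $m \ge Cq^2$'' is the opposite of the truth in the critical regime: for a typical set $U$ of size $m = Cq^2$ the density is $m/n = C/q^2$, so $t_A$ is $\Theta(C) = O(1)$ for essentially every clique, and it is the small cliques that carry \emph{all} of $e(H[U])$. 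Second, and more seriously, your plan to handle large cliques by showing $X_A \ge t_A^2/8$ for each one individually and then union bounding cannot survive the union bound over $U$. For a fixed $U$ containing a clique $A$ with $t_A = \Theta(q^2)$, the event $X_A < t_A^2/8$ has probability about $e^{-\Omega(t_A)} = e^{-\Omega(q^2)}$; but the number of sets $U$ of size $Cq^2$ is $\binom{n}{Cq^2} = e^{\Theta(q^2 \log q)}$, so this per-$U$ failure probability is not nearly small enough. Reorganising the union bound over pairs $(A, T)$ with $T = U \cap A$ makes this worse rather than better, since $\binom{|A|}{t}$ grows like $(eq^2/t)^t$, which dominates $e^{-\Omega(t)}$ for every $t \le q^2$, so the resulting sum diverges.

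What is missing is a mechanism for paying for the cliques where $t_A$ is large. The point is that such cliques are few and highly structured: a clique $A$ with $t_A$ large is determined by only $O(\log q)$ bits (there are only $q^3 + 1$ cliques), and specifying $U \cap A$ then costs far fewer bits than specifying the same number of arbitrary vertices of $U$. A correct argument needs to exploit this -- for instance, by stratifying the union bound by the ``heavy-clique profile'' of $U$, or by showing that the contribution from cliques with $t_A$ below a carefully chosen threshold is already $\Omega(m^2/q)$ and concentrates well enough (the threshold for the ``small'' regime cannot simply be $c'\log q$ for a constant $c'$, as then the large-clique tail does not beat the entropy of $U$). As written, the proposal correctly identifies the obstacle but does not overcome it; the ``careful accounting'' you defer to is precisely where the real work lies.
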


The proof of Lemma~\ref{lem:R4k:supersaturation} uses the properties of the graph $H$ guaranteed by Lemma~\ref{lem:Hermitian:unital}, together with a relatively straightforward martingale argument. The lemma allows us to apply Lemma~\ref{lem:graph:containers} with $R = Cq^2$ and $s = 2^{12} q\log q$, and immediately obtain Lemma~\ref{lem:R4k:counting:ksets}. As explained after Lemma~\ref{lem:R4k:counting:ksets}, this therefore completes the (sketch) proof of Theorem~\ref{thm:R4k:again}. 

\subsection{Optimally pseudorandom graphs and Ramsey numbers}\label{sec:MuV}

When $\ell \ge 5$ we have no analogue of the Hermitian unital to help us, and the best-known lower bounds are given by natural generalisations of the constructions described in Section~\ref{sec:R3k:lower}. These bounds differ from the Erd\H{o}s--Szekeres bound~\eqref{eq:ESz:bound} by a polynomial factor when $\ell \ge 5$, leaving us with the following rather unsatisfactory situation:
\begin{equation}\label{eq:Rlk:bounds}
k^{(\ell+1)/2 + o(1)} \le R(\ell,k) \le k^{\ell-1+o(1)}
\end{equation}
as $k \to \infty$. In particular, in the case $\ell = 5$ we have the following bounds: 
\begin{equation}\label{eq:R5k:bounds}
\frac{ck^3}{(\log k)^{8/3}} \le R(5,k) \le \frac{Ck^4}{(\log k)^3}
\end{equation}
for some constants $C,c > 0$. The lower bound follows from the analysis of the $H$-free process by Bohman and Keevash~\cite{BK10} (here we only need the case $H = K_5$), while the upper bound 
was proved by Ajtai, Komlós and Szemerédi~\cite{AKSz}, see Section~\ref{AKSz:sec}.

One potentially promising approach towards improving the lower bound in~\eqref{eq:Rlk:bounds} was introduced fairly recently by Mubayi and Verstraete~\cite{MuV}, and was one of the motivations for the proof in~\cite{MaV}. In order to describe it, we need to define more precisely what we mean by a \emph{pseudorandom graph}; the following definition was introduced by Thomason~\cite{T87} in 1987.

\begin{defn}\label{def:jumbled}
A graph $G$ is \emph{$(p,\beta)$-jumbled} if
$$\bigg| e\big( G[U] \big) - p {|U| \choose 2} \bigg| \le \beta |U|$$
for every set $U \subset V(G)$. 
\end{defn}

It follows easily from Chernoff's inequality that $G(n,p)$ is $(p,\beta)$-jumbled with high probability for some $\beta = O(\sqrt{pn})$, and it was shown by Erd\H{o}s, Goldberg, Pach and Spencer~\cite{EGPS} that no graph on $n$ vertices is $(p,\beta)$-jumbled with $\beta = o(\sqrt{pn})$. We therefore say that a graph is \emph{optimally pseudorandom} if it is $(p,\beta)$-jumbled for some $p$ and $\beta = O\pigl( \sqrt{pn} \pigr)$. The following question is one of the  most important open problems in graph theory.

\begin{question}\label{qu:optimally:pseudorandom}
How dense can an optimally pseudorandom $K_\ell$-free graph be?
\end{question} 

It is not hard to show that if $\beta = o\pigl( p^{\ell-1} n \pigr)$ then every $(p,\beta)$-jumbled graph with $n$ vertices contains a copy of $K_\ell$ (just apply the definition to the neighbourhood of a vertex, and use induction on $\ell$), so a necessary condition is that
\begin{equation}\label{eq:pseudorandom:upper}
p = O\big( n^{-1/(2\ell-3)} \big).
\end{equation}
In the case $\ell = 3$ such graphs exist: an optimally pseudorandom triangle-free graph with density $n^{-1/3}$ was discovered by Alon~\cite{A94} in 1994, and another (random) construction was given by Conlon~\cite{C17}.\footnote{In~\cite{C17} it is only shown that there exist $(p,\beta)$-jumbled graphs with $p = \Theta(n^{-1/3})$ and $\beta = O\big( \sqrt{pn} \log n \big)$; however, as noted in~\cite{MaV}, the proof of Lemma~\ref{lem:R4k:supersaturation} allows one to remove the factor of $\log n$.} 
However, for $\ell \ge 4$ the best known constructions of optimally pseudorandom $K_\ell$-free graphs have density 
\begin{equation}\label{eq:pseudorandom:lower}
p = \Theta\big( n^{-1/(\ell-1)} \big).
\end{equation}
These graphs were discovered in 2020 by Bishnoi, Ihringer and Pepe~\cite{BIP}, who improved an earlier construction of Alon and Krivelevich~\cite{AK97}. The vertices of their graphs are the set of square points in the $(\ell-1)$-dimensional projective space $\text{PG}(\ell-1,q)$ over a finite field $\F_q$, and the edges correspond to the zeros of a certain quadratic form. 

Despite the large gap between~\eqref{eq:pseudorandom:upper} and~\eqref{eq:pseudorandom:lower}, it is widely believed that there do exist optimally pseudorandom $K_\ell$-free graphs with density $n^{-1/(2\ell-3)}$. Mubayi and Verstraete~\cite{MuV} showed that \emph{if} such graphs exist, then the upper bound in~\eqref{eq:Rlk:bounds} is also tight. 

\begin{theorem}[Mubayi and Verstraete, 2024]\label{thm:MuV}
If there exists an optimally pseudorandom $K_\ell$-free graph with $n$ vertices and density $p = \Theta\pigl( n^{-1/(2\ell-3)} \pigr)$, then 
$$R(\ell,k) \ge \frac{ck^{\ell-1}}{(\log k)^{2\ell - 4}}$$
for some constant $c > 0$. 
\end{theorem}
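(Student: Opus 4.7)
The plan is to exhibit, for every large $k$, a $K_\ell$-free graph on at least $c k^{\ell-1}/(\log k)^{2\ell-4}$ vertices with independence number less than $k$, by taking a uniformly random induced subgraph of the hypothesised optimally pseudorandom graph $H$. The strategy mirrors the final step of Mattheus and Verstraete's proof of Theorem~\ref{thm:R4k:again} sketched in Section~\ref{sec:R4k}: bound the number of independent sets of size $k$ in $H$ via the graph container method, then discard vertices randomly to kill all of them. The role played by their explicit $K_4$-free graph $H'$ is now played directly by the hypothesised $H$, and the supersaturation estimate (Lemma~\ref{lem:R4k:supersaturation}) is replaced by a one-line consequence of jumbledness.

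Let $H$ have $n$ vertices, density $p = \Theta(n^{-1/(2\ell-3)})$, and jumbledness parameter $\beta = O(\sqrt{pn})$. A one-line supersaturation step --- Definition~\ref{def:jumbled} applied to any $U \subset V(H)$ with $|U| \ge R := C\sqrt{n/p}$ --- yields $e(H[U]) \ge (p/4)|U|^2$ provided $C$ is a sufficiently large constant. Plugging this into Lemma~\ref{lem:graph:containers} with $\beta_{\mathrm{cont}} = p/4$ and $s = \Theta((\log n)/p)$ produces the container bound
$$|\cI_k(H)| \;\le\; \binom{n}{s}\binom{R}{k-s}$$
on the number of independent $k$-subsets of $H$, valid for every $k \ge s$.

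The second step is to let $S \subset V(H)$ be a uniformly random subset of size $M$ and to bound
$$\Ex\bigl[\,|\cI_k(H[S])|\,\bigr] \;\le\; \binom{n}{s}\binom{R}{k-s}\bigl(M/n\bigr)^k.$$
Taking $k = Ts$ for a sufficiently large constant $T$ (so that $k \asymp n^{1/(2\ell-3)}\log n$), substituting $R \asymp n^{(\ell-1)/(2\ell-3)}$ and $\log(np) = \bigl((2\ell-4)/(2\ell-3)\bigr)\log n$, and estimating the binomials by $(en/s)^s$ and $(eR/(k-s))^{k-s}$, a short calculation reduces the condition $\Ex<1$ to
$$\log(n/M) \;>\; \frac{\ell-2}{2\ell-3}\log n + O(\log\log n).$$
This allows $M = \Omega\bigl(n^{(\ell-1)/(2\ell-3)}/\mathrm{polylog}(n)\bigr)$, so with positive probability $H[S]$ is a $K_\ell$-free graph on $M$ vertices with independence number less than $k$. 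Translating $n$ back to $k$ via $n \asymp (k/\log k)^{2\ell-3}$ and tracking the polylogarithmic factors gives the claimed lower bound on $R(\ell,k)$.

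The main obstacle is the tight interlocking of the three parameters $(s, k, M)$: the fingerprint size $s$ cannot be reduced below $\Theta((\log n)/p)$ without losing supersaturation (which in turn requires the full strength of the optimal jumbledness $\beta = O(\sqrt{pn})$); this then forces $k$ to be at least of this order for the $\binom{R}{k-s}$ factor to contribute meaningfully; and the condition $\Ex<1$ pins down $M$. The exponent $\ell-1$ on $k$ in the final Ramsey bound is precisely the ratio of the exponents of $R$ and $k$ in these parameter choices, so any strict weakening of either the density $p$ or the jumbledness $\beta$ in the hypothesis would immediately degrade that exponent; this is exactly why the conjectural existence of optimally pseudorandom $K_\ell$-free graphs (Question~\ref{qu:optimally:pseudorandom}) is so tightly bound up with the off-diagonal bounds in~\eqref{eq:Rlk:bounds}.
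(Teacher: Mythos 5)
Your strategy---bound the number of independent $k$-sets via a fingerprint/container argument using jumbledness as supersaturation, then randomly thin---is in essence the same as the paper's, just unpackaged. The paper invokes the Alon--R\"odl counting lemma (Lemma~\ref{lem:AR:counting}), which is itself precisely such a container bound specialised to $(p,\beta)$-jumbled graphs (its sketch in Section~\ref{sec:MuV} is ``choose vertices one at a time, the available set shrinks geometrically for $O(\log n / p)$ steps, then jumbledness bounds the remaining choices''), and which returns the cleaner single-exponential expression $(2^{10}\beta/pk)^k$. Your version re-derives the equivalent estimate from Lemma~\ref{lem:graph:containers} plus the one-line supersaturation; both give the same bound once the parameters are chosen correctly.

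The gap is in the parameter choice, and it is quantitative but real. With $s \asymp (\log n)/p$ forced by the container hypothesis and $k = Ts$ for a \emph{constant} $T$, the fingerprint factor $\binom{n}{s}$ contributes
$$\frac{1}{k}\, s\log\frac{en}{s} \;=\; \frac{1}{T}\log\frac{en}{s} \;=\; \frac{1}{T}\cdot\frac{2\ell-4}{2\ell-3}\log n \;+\; O(\log\log n)$$
to $\log(n/M)$, which is $\Theta(\log n)$, not the $O(\log\log n)$ you claim. Carried through, this forces $\log(n/M) > \frac{T+1}{T}\cdot\frac{\ell-2}{2\ell-3}\log n - O(\log\log n)$, so $M \lesssim n^{(\ell-1)/(2\ell-3)\,-\,(\ell-2)/(T(2\ell-3))}$, a genuine polynomial loss: translating back, your argument only gives $R(\ell,k)\gtrsim k^{\ell-1-(\ell-2)/T}$ up to logs, which for any fixed $T$ falls short of the stated theorem. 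The repair is to let $T$ grow with $n$; the sweet spot is $T\asymp\log n$, i.e.\ $k\asymp (\log n)^2/p$, which makes the $\binom{n}{s}$ contribution $O(1)$ per vertex, yields $M\asymp n^{(\ell-1)/(2\ell-3)}(\log n)^2$, and hence $M\asymp k^{\ell-1}/(\log k)^{2\ell-4}$ as required. This is exactly the paper's choice of $k = 2^{11}(\log n)^2/p$, and is also the threshold $k\geq (\log n)^2/p$ appearing in the hypothesis of Lemma~\ref{lem:AR:counting}---not an accident, since the Alon--R\"odl lemma is only valid once the fingerprint term has been suppressed.
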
 

In fact, the same conclusion holds under the slightly weaker assumption that there exists a $(p,\beta)$-jumbled $K_\ell$-free graph with $n$ vertices and $\beta = \Theta\pigl( p^{\ell-1} n \pigr)$. 

The proof of Theorem~\ref{thm:MuV} is surprisingly simple: one just needs to consider a $q$-random subset $S$ of the vertices for some suitable function $q = q(n)$, 
and bound the expected number of independent $k$-sets in $S$. To do so, we will use the following bound of Alon and R\"odl~\cite{AR}, which was already mentioned in Section~\ref{AlonRodl:sec}. Their lemma gives a general upper bound on the number of independent $k$-sets in a $(p,\beta)$-jumbled graph. 

\begin{lemma}[Alon and R\"odl, 2005]\label{lem:AR:counting}
Let\/ $G$ be a $(p,\beta)$-jumbled graph with $n$ vertices. If\/ $k \ge \frac{(\log n)^2}{p}$, 
then $G$ has at most 
$$\bigg( \frac{2^{10}\beta}{pk} \bigg)^k$$
independent sets of size $k$. 
\end{lemma}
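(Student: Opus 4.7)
The plan is to deduce Lemma~\ref{lem:AR:counting} from the graph container lemma (Lemma~\ref{lem:graph:containers}) after first using the jumbledness hypothesis to establish a supersaturation estimate on every sufficiently large subset of $V(G)$. The supersaturation step is essentially immediate from Definition~\ref{def:jumbled}: for every $U \subset V(G)$ with $|U| \ge R := 4\beta/p$,
$$e(G[U]) \;\ge\; p{|U| \choose 2} - \beta|U| \;\ge\; \frac{p}{4}|U|^2,$$
which verifies the second condition of~\eqref{eq:graph:containers:conditions} with density parameter $p/4$ (playing the role of $\beta$ there). The first condition, $R \ge e^{-(p/4)s}n$, is equivalent to $s \ge (4/p)\log(n/R)$, so I would take $s := \lceil (4/p) \log n \rceil$; the hypothesis $k \ge (\log n)^2/p$ then forces $s \le 4k/\log n = o(k)$, a fact that will be crucial in the next step.

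Applying Lemma~\ref{lem:graph:containers} bounds the number of independent $k$-sets of $G$ by $\binom{n}{s}\binom{R}{k-s}$. To put this into the stated form, I would use the elementary estimates
$$\binom{n}{s} \;\le\; (en/s)^s \;\le\; e^{O((\log n)^2 / p)} \;=\; e^{O(k)} \qquad \text{and} \qquad \binom{R}{k-s} \;\le\; \big(eR/(k-s)\big)^{k-s},$$
and substitute $R = 4\beta/p$ together with $k - s = (1 - o(1))k$ to obtain an upper bound of the form $\bigl(C \beta / (pk)\bigr)^k$ for some absolute constant $C$. The constant $2^{10}$ claimed in the lemma has significant slack, and one may assume $2^{10}\beta/(pk) < 1$ throughout, since otherwise the conclusion is trivial; this leaves ample room to absorb the $e^{O(k)}$ factor coming from $\binom{n}{s}$ into the $k$-th power.

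The main technical obstacle is optimising the three coupled parameters $s$, $R$, and the density threshold in the container lemma simultaneously, so as to drive the constant $C$ down to $2^{10}$. The hypothesis $k \ge (\log n)^2 / p$ is precisely what makes this possible: it ensures that $s \ll k$, so that the wasted factor $\binom{n}{s}$ from choosing the fingerprint is harmlessly subexponential in $k$, while the main term $\binom{R}{k-s} \approx (R/k)^k = \bigl(4\beta/(pk)\bigr)^k$ governs the exponential behaviour and yields the target base $\beta/(pk)$. Without this hypothesis (or something similar), no bound of this shape is achievable by this straightforward application of the container method.
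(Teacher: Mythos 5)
Your approach --- applying Lemma~\ref{lem:graph:containers} with the jumbledness hypothesis supplying the supersaturation $e(G[U]) \ge (p/4)|U|^2$ for $|U| \ge 4\beta/p$ --- is precisely what the paper's sketch describes, just phrased through the stated container lemma rather than as a direct vertex-by-vertex count, so the approach is essentially the same. However, your ``trivial case'' remark is backwards: when $2^{10}\beta/(pk) < 1$ the claimed bound is \emph{less} than one, so far from being trivial you would need to show that there are zero independent $k$-sets, and there is certainly no room to absorb an $e^{O(k)}$ factor into a sub-unity base. The observation you actually want (and which also makes the constant-chasing clean) is the reverse implication: if $G$ contains any independent $k$-set $I$, then applying Definition~\ref{def:jumbled} to $U = I$ gives $p\binom{k}{2} \le \beta k$, hence $\beta \ge p(k-1)/2$ and $R = 4\beta/p \ge 2(k-1) \ge k$. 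With $R \ge k \ge k-s$ in hand, the function $t \mapsto (eR/t)^t$ is increasing on $(0,R]$, so $\binom{R}{k-s} \le (eR/(k-s))^{k-s} \le (eR/k)^k$, and combining with $\binom{n}{s} \le e^{4k}$ (from $s\log n \le 4(\log n)^2/p \le 4k$) yields $\binom{n}{s}\binom{R}{k-s} \le \bigl(4e^5\beta/(pk)\bigr)^k$, and $4e^5 < 2^{10}$.
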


To prove Lemma~\ref{lem:AR:counting}, choose the vertices of the independent set one by one, as usual removing the neighbourhoods of the selected vertices from the set $A$ of available vertices. Now observe that $A$ can shrink by a factor of $1 - p/2$ in at most $O\big( \frac{\log n}{p} \big)$ steps, and use the assumption that $G$ is $(p,\beta)$-jumbled to bound the number of choices in all remaining steps. 

Now, to prove Theorem~\ref{thm:MuV}, let $G$ be a $(p,\beta)$-jumbled $K_\ell$-free graph with $n$ vertices, set 
$$q = \frac{(\log n)^2}{\beta} \qquad \text{and} \qquad k = \frac{2^{11}(\log n)^2}{p},$$ 
and let $S$ be a $q$-random subset of $V(G)$. Then $|S| \approx qn$ and the expected number of independent sets of size $k$ in $S$ is at most
$$q^k \bigg( \frac{2^{10}\beta}{pk} \bigg)^k \to 0$$
as $k \to \infty$, by Lemma~\ref{lem:AR:counting}. We therefore obtain a $K_\ell$-free graph $G[S]$ with roughly $qn$ vertices and no independent set of size $k$. Moreover, if $\beta = \Theta\pigl( p^{\ell-1} n \pigr)$, then 
$$|S| = \Theta\bigg( \frac{n(\log n)^2}{\beta} \bigg) = \Theta\bigg( \frac{k^{\ell-1}}{(\log k)^{2\ell - 4}} \bigg),$$
as required.


\section{The Ajtai--Komlós--Szemerédi method}\label{AKSz:sec} 

In this section we will prove the following theorem of Ajtai, Komlós and Szemerédi~\cite{AKSz}, which gives (essentially) 
the best-known upper bound on $R(\ell,k)$ for all $3 \le \ell \ll \log k$. The~theorem is only stated in~\cite{AKSz} for fixed $\ell$ and $k \to \infty$, but the full version follows easily from the same proof. Since their method is both simple and beautiful, and moreover is not as widely-known as it should be, we provide an essentially complete proof. 

\begin{theorem}[Ajtai, Komlós and Szemerédi, 1980]\label{thm:AKSz}
Let $k \in \N$ be sufficiently large. Then 
$$R(\ell,k) \le \bigg( \frac{8\ell}{\log k} \bigg)^{\ell-2} {k + \ell - 2 \choose \ell - 1}.$$
for every $\ell \ge 2$. 
\end{theorem}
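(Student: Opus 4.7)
The plan is to proceed by induction on $\ell$, with the Ajtai--Komlós--Szemerédi greedy-selection idea providing the inductive step. The base case $\ell = 2$ is trivial since $R(2,k) = k = \binom{k}{1}$ and the prefactor $(8\ell/\log k)^{\ell-2}$ equals $1$, while the case $\ell = 3$ follows immediately from Shearer's theorem (Theorem~\ref{lem:Shearer}) via the argument given just after its statement: the bound $(1+o(1))k^2/\log k$ is comfortably below $(24/\log k)\binom{k+1}{2}$ for all sufficiently large $k$. So I may assume $\ell \ge 4$ and that the theorem holds for $\ell-1$.

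For the inductive step, set $N = \lfloor (8\ell/\log k)^{\ell-2}\binom{k+\ell-2}{\ell-1} \rfloor$ and suppose, for contradiction, that some red-blue colouring of $K_N$ has no red $K_\ell$ and no blue $K_k$; write $G$ for the red graph. I will construct a blue $K_k$ via a greedy process, maintaining a blue clique $B_i$ together with a candidate set $C_i \subset V(K_N)$ that is blue-adjacent to every vertex of $B_i$; at each step I pick $v_i \in C_i$, set $B_{i+1} = B_i \cup \{v_i\}$ and $C_{i+1} = C_i \setminus (\{v_i\} \cup N_G(v_i))$. The Erd\H{o}s--Szekeres recursion corresponds to choosing $v_i$ arbitrarily and gives $|C_i|-|C_{i+1}| < R(\ell-1,k-i)$, since otherwise the red neighbourhood of $v_i$ in $C_i$ is $K_{\ell-1}$-free and too large, so by the inductive hypothesis it contains either a red $K_{\ell-1}$ (extending to a red $K_\ell$ with $v_i$) or a blue $K_{k-i}$ (extending $B_i$ to a blue $K_k$). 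The AKSz improvement comes from choosing $v_i$ more carefully: not any vertex of $C_i$, but one of small red-degree in $C_i$. This is possible because $G[C_i]$ is $K_\ell$-free, so each of its neighbourhoods is $K_{\ell-1}$-free, and a Shearer-type lower bound on the independence number of a $K_{\ell-1}$-free graph with bounded maximum degree (obtained inductively, or by iterating Theorem~\ref{lem:Shearer} in neighbourhoods) forces a typical vertex to have red-degree at most roughly $R(\ell-1,k-i)\cdot \frac{8\ell}{\log k}$. Telescoping these improved bounds over the $k$ steps of the process produces the factor $(8\ell/\log k)^{\ell-2}$ saving over the Erd\H{o}s--Szekeres recursion and closes the induction.

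The main obstacle I expect is the quantitative bookkeeping of the per-step saving. Two points deserve care. First, the saving of a factor $\log k/(8\ell)$ must be established \emph{uniformly} along the process --- including late stages when $k-i$ is small and the induction hypothesis for $R(\ell-1,k-i)$ offers a weaker saving; this is handled by checking that the product of per-step ratios telescopes cleanly into the claimed $(\log k/(8\ell))^{\ell-2}$ regardless, at the cost of the slightly loose constant $8\ell$. Second, one needs to verify that the Shearer-type selection can genuinely be performed inside each $G[C_i]$, which in turn requires that $\alpha(G[C_i]) < k-i$ throughout the process (otherwise we are already done), together with a degree-regularisation step so that the independence-number bound is applicable. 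Once these two ingredients are in place, the inductive step is essentially a careful (but purely computational) summation of the resulting recurrence.
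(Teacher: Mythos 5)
Your overall scaffold---induction on $\ell$, base cases $\ell=2,3$, and a Shearer-type ingredient per level of the induction---is the right one and matches the paper. But the inductive step as you describe it has a genuine gap, and there is also some confusion in the accounting.

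The core issue is your claim that ``a Shearer-type lower bound on the independence number of a $K_{\ell-1}$-free graph with bounded maximum degree\ldots\ forces a typical vertex to have red-degree at most roughly $R(\ell-1,k-i)\cdot\frac{8\ell}{\log k}$'' (which you presumably mean to be $R(\ell-1,k-i)\cdot\frac{\log k}{8\ell}$, i.e.\ a gain, not a loss). This does not follow. Shearer's theorem controls independence numbers, not vertex degrees: a $d$-regular $K_\ell$-free graph has no low-degree vertex for you to pick, and nothing forces one to exist. More fundamentally, Shearer's theorem (Theorem~\ref{lem:Shearer}) is a statement about \emph{triangle-free} graphs, and the induced subgraphs $G[C_i]$ in your process are merely $K_\ell$-free. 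The extension of Shearer to $K_\ell$-free graphs is Problem~\ref{prob:Shearer:cliques}, which is open; the best known such bound loses a $\log\log d$ factor, which would not give the theorem. The paper's proof circumvents this precisely by a step you do not have: it shows that a $K_\ell$-free graph $G$ with $\alpha(G)<k$ has \emph{few triangles} (if some vertex $v$ lay in too many, then $N(v)$ would be a $K_{\ell-1}$-free graph with a high-degree vertex $u$, and $N(v)\cap N(u)$ would violate the inductive bound $R(\ell-2,k)$), and then applies a ``Shearer for graphs with few triangles'' lemma (Lemma~\ref{cor:few:triangles}), which takes a $p$-random vertex subset, deletes one vertex per remaining triangle, and invokes the genuine triangle-free Theorem~\ref{lem:Shearer}. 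That triangle-counting reduction, using the $\ell-2$ induction hypothesis, is the key idea the argument needs, and it is absent from your proposal.

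Separately, the bookkeeping sentence ``telescoping these improved bounds over the $k$ steps of the process produces the factor $(8\ell/\log k)^{\ell-2}$'' is not how the exponent arises. A per-step $\log k/(8\ell)$ saving over $k$ blue steps would not telescope to an exponent of $\ell-2$; the $\ell-2$ comes from the number of levels in the induction on $\ell$ (one factor of $\log k$ gained each time $\ell$ decreases by one), not from the length of the greedy blue process. In the paper this is entirely transparent: the inductive step produces a single factor $2k/\log\lambda$, with $\lambda=(8k/\log k)^{1/3}$, multiplying $d=R(\ell-1,k)$, and iterating over $\ell$ yields the claimed power.
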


Note in particular that Theorem~\ref{thm:AKSz} implies~\eqref{eq:AKSz:fixed:ell} for all fixed $\ell$, and Theorem~\ref{thm:off:diagonal} for all $3 \le \ell \le (\log k)/9$. The first step 
is to deduce the following bound on the independence number of graphs with few triangles from Theorem~\ref{lem:Shearer}.

\begin{lemma}\label{cor:few:triangles}
Let $G$ be a graph with $n$ vertices, average degree at most $d$, and at most $d^2 n / \lambda^3$ triangles for some $\lambda = \lambda(d)$ with $1 \ll \lambda \le d$. Then 
$$\alpha(G) \ge \big(1 + o(1) \big) \frac{n \log \lambda}{d}$$
as $d \to \infty$. 
\end{lemma}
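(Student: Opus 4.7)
The plan is to reduce to Shearer's theorem (Theorem~\ref{lem:Shearer}) via random sparsification: sample a random subset of the vertices, delete a negligible number of edges to destroy any remaining triangles, and apply Shearer to the resulting triangle-free induced subgraph. The critical parameter choice is the sampling probability $p = \lambda/d$, which is designed so that the number of triangles becomes dominated by the number of edges. Since $\lambda \le d$, we may assume the average degree of $G$ is at least $d/\lambda$, as otherwise the triangles already cover all but a negligible fraction of edges and the bound is easy.

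First I would perform a mild truncation, discarding the set of vertices whose degree exceeds $d \cdot (\log d)^{1/2}$; by Markov's inequality this removes an $o(1)$ fraction of the vertices, leaving a subgraph $G_0$ with average degree at most $d$, bounded maximum degree, and at most $d^2 n/\lambda^3$ triangles. Then I would choose $W \subseteq V(G_0)$ by including each vertex independently with probability $p = \lambda/d$, and compute
\[
\Ex\pigl[|W|\pigr] = \big(1-o(1)\big)\frac{\lambda n}{d}, \qquad
\Ex\pigl[e(G_0[W])\pigr] \le \frac{\lambda^2 n}{2d}, \qquad
\Ex\pigl[T(G_0[W])\pigr] \le \frac{n}{d},
\]
where $T(G_0[W])$ denotes the number of triangles. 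Crucially, $T(G_0[W])/e(G_0[W]) = O(1/\lambda^2) = o(1)$, so triangles are a lower-order contribution.

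Using the degree truncation, a standard second-moment (Chebyshev) computation gives that $|W|$ and $e(G_0[W])$ are $(1+o(1))$-concentrated around their expectations, and $T(G_0[W]) = O(n/d)$ with probability tending to $1$. Fixing such a $W$, let $G'$ be the triangle-free graph obtained from $G_0[W]$ by deleting one edge from each triangle. Then $|V(G')| = \big(1-o(1)\big)\lambda n/d$ and
\[
e(G') \ge e(G_0[W]) - T(G_0[W]) = \big(1+o(1)\big) e(G_0[W]),
\]
so the average degree $d'$ of $G'$ satisfies $d' \le (1+o(1))\lambda$. Applying Theorem~\ref{lem:Shearer} to $G'$,
\[
\alpha(G) \ge \alpha(G') \ge \big(1+o(1)\big) \frac{|V(G')| \log d'}{d'} \ge \big(1+o(1)\big) \frac{n \log \lambda}{d},
\]
as required.

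The main obstacle is obtaining the sharp constant $(1+o(1))$ in the bound, rather than just $\Omega\pigl(n \log \lambda / d\pigr)$. This is delicate because Shearer's bound is very sensitive to the average degree, so we need $d'$ to be within $(1+o(1))$ of $\lambda$; in particular $e(G[W])$ must concentrate tightly. Without truncation a single vertex of degree close to $n$ could drive the variance of $e(G[W])$ up, so the truncation step (and a careful covariance bound, using that pairs of edges contribute to the variance only when they share a vertex) is the place where one must work carefully to make the $(1+o(1))$ bound go through.
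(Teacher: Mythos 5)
Your strategy---take a $p$-random vertex subset with $p = \lambda/d$ so that triangles become negligible compared to edges, destroy the remaining triangles, and apply Shearer's theorem---is the paper's approach. However, the final step contains a genuine error: you form $G'$ from $G_0[W]$ by \emph{deleting one edge from each triangle}, and then assert $\alpha(G) \ge \alpha(G')$. This inequality fails in the direction you need: deleting edges can only \emph{increase} the independence number, and an independent set of $G'$ may contain both endpoints of a deleted edge, in which case it is not an independent set of $G$. (You even call $G'$ a ``triangle-free induced subgraph,'' but a graph obtained by deleting edges from an induced subgraph is not itself an induced subgraph of $G$.)

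The paper avoids this by deleting one \emph{vertex} from each triangle, producing a triangle-free \emph{induced} subgraph $G' \subset G$, so that $\alpha(G) \ge \alpha(G')$ is immediate. Since (by Markov) $G[S]$ has at most $2n/d$ triangles with probability at least $1/2$, and $2n/d = o(\lambda n/d)$, this vertex deletion is negligible: $G'$ still has $(1+o(1))\lambda n/d$ vertices and, because vertex deletion can only decrease the edge count, average degree at most $(1+o(1))\lambda$; Shearer's theorem then finishes the proof exactly as you intended. Your argument could alternatively be patched at the end by removing, from the final independent set of $G'$, one vertex from each of the $O(n/d)$ deleted edges it contains---this costs only a negligible fraction of $n\log\lambda/d$---but vertex deletion from the outset is cleaner. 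Your degree truncation and second-moment computation are a more elaborate route to the concentration of $|S|$ and $e(G[S])$ than the paper's brief appeal to Chernoff-type estimates, but that is a minor difference compared to the edge-versus-vertex issue.
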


\begin{proof}
Set $p = \lambda/d$, and let $S$ be a $p$-random subset of $V(G)$. The expected number of triangles in $G[S]$ is at most $p^3 d^2 n / \lambda^3 = n / d$, and therefore, by Markov's inequality, with probability at least $1/2$ the subgraph $G[S]$ induced by $S$ contains at most $2n/d$ triangles. Moreover, by Chernoff's inequality, with probability at least $2/3$ we have 
$$|S| = \big(1 + o(1) \big) \frac{\lambda n}{d} \qquad \text{and} \qquad e(G[S]) \le \big(1 + o(1) \big) \frac{\lambda^2 n}{2d}.$$ 
Therefore, removing one vertex from each triangle in $G[S]$, we obtain a triangle-free induced subgraph $G' \subset G$ with $\big(1 + o(1) \big) \lambda n /d$ vertices and average degree at most $\big(1 + o(1) \big) \lambda$. 

Applying Theorem~\ref{lem:Shearer} to this graph, we deduce that 
$$\alpha(G) \ge \alpha(G') \ge \big(1 + o(1) \big) \frac{(\lambda n / d) \log \lambda}{\lambda} = \big(1 + o(1) \big) \frac{n \log \lambda}{d},$$
as claimed.
\end{proof}

We can now bound $R(\ell,k)$ by induction on $\ell$. 

\begin{proof}[Proof of Theorem~\ref{thm:AKSz}]
Let $k \in \N$ be sufficiently large. We will prove by induction on $\ell$ that 
\begin{equation}\label{eq:AKSz:bound}
R(\ell,k) \le \bigg( \frac{8}{\log k} \bigg)^{\ell-2} k^{\ell-1}
\end{equation}
for every $\ell \ge 2$, which easily implies the claimed bound. Note that~\eqref{eq:AKSz:bound} holds when $\ell = 2$, since $R(2,k) = k$, and recall that we proved the case $\ell = 3$ in Section~\ref{sec:R3k:upper}. 

Now let $\ell \ge 4$, and assume that~\eqref{eq:AKSz:bound} holds for $\ell - 1$ and $\ell - 2$. Set $n = R(\ell,k) - 1$, and let $G$ be a $K_\ell$-free graph with $n$ vertices and no independent set of size $k$. Set 
$$d = \bigg( \frac{8}{\log k} \bigg)^{\ell-3} k^{\ell-2}$$
and observe that the maximum degree of $G$ is at most $d$, since every vertex of $G$ has degree less than $R(\ell-1,k)$, and by the induction hypothesis we have $R(\ell-1,k) \le d$. 

Set $\lambda = (8k / \log k)^{1/3}$ and suppose that some vertex $v$ is contained in at least $d^2 / \lambda^3$ triangles in $G$. Then the neighbourhood $N(v)$ induces a graph $G'$ with  
$$v(G') \le d \qquad \text{and} \qquad e(G') \ge \frac{d^2}{\lambda^3} = \bigg( \frac{8}{\log k} \bigg)^{\ell-4} k^{\ell-3} \cdot d.$$
By the induction hypothesis, it follows that
$$\Delta(G') \ge \bigg( \frac{8}{\log k} \bigg)^{\ell-4} k^{\ell-3} \ge R(\ell-2,k),$$
which is a contradiction, since $G$ is $K_\ell$-free and $\alpha(G) < k$.

It follows that there are at most $d^2 n / \lambda^3$ triangles in $G$. Applying Lemma~\ref{cor:few:triangles} to $G$, we deduce that 
$$k > \alpha(G) \ge \big(1 + o(1) \big) \frac{n \log \lambda}{d},$$ 
and therefore 
\begin{equation}\label{eq:ell:vs:ell:minus:one:better}
R(\ell,k) = n + 1 \le \frac{2kd}{\log \lambda} \le \bigg( \frac{8}{\log k} \bigg)^{\ell-2} k^{\ell-1},
\end{equation}
as required, since $k$ is sufficiently large and $\lambda \ge k^{1/4}$. 
\end{proof}

\section{R\"odl's method:~counting Erd\H{o}s--Szekeres paths}\label{Rodl:sec}

In this section we will present an approach due to R\"odl (see~\cite[Theorem~2.13]{GR}), which we will use to deduce Theorem~\ref{thm:off:diagonal} in the range $\ell = \Theta( \log k )$ from Theorem~\ref{thm:AKSz}. 


\begin{theorem}[R\"odl, 1987]\label{cor:Rodl}
If $\hspace{0.04cm} c > 0$ is sufficiently small, then
$$R(\ell,k) \le k^{-c} \hspace{0.02cm} {k + \ell - 2 \choose \ell-1}.$$
for all sufficiently large $\ell,k \in \N$ with $c \log k \le \ell \le c\sqrt{k}$. 
\end{theorem}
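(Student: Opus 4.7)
The plan is to iterate the Erd\H{o}s--Szekeres recurrence $R(\ell,k) \le R(\ell-1,k) + R(\ell,k-1)$ from $(\ell,k)$ down to an auxiliary level $\ell' := \lfloor c \log k \rfloor$, and then apply Theorem~\ref{thm:AKSz} at the base of the iteration to extract a polynomial saving. Stopping each branch of the binary recursion tree the first time its $\ell$-coordinate reaches $\ell'$, a standard lattice-path count yields
\[
R(\ell, k) \;\le\; \sum_{b=0}^{k-1} \binom{\ell - \ell' - 1 + b}{\ell - \ell' - 1}\, R(\ell', k-b) \;+\; \mathcal{E},
\]
where $\mathcal{E}$ collects the branches that terminated first because the $k$-coordinate reached~$1$; the hockey-stick identity together with $\ell \le c\sqrt{k}$ gives $\mathcal{E} \le (c/\sqrt{k})^{\ell'}\binom{k+\ell-2}{\ell-1}$, which is super-polynomially smaller than our target.

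I would split the main sum at $B := k - k^{1/2}$. On $b \le B$ we have $\log(k-b) \ge \frac{1}{2}\log k$, so Theorem~\ref{thm:AKSz} gives
\[
R(\ell', k-b) \;\le\; \bigg( \frac{16\ell'}{\log k} \bigg)^{\ell'-2}\binom{k-b+\ell'-2}{\ell'-1} = (16c)^{\ell'-2}\binom{k-b+\ell'-2}{\ell'-1},
\]
and for $c > 0$ sufficiently small (concretely $c \le 1/(16e)$) this prefactor is at most $k^{-c}$. Combining with the trivial bound $R(\ell', k-b) \le \binom{k-b+\ell'-2}{\ell'-1}$ on the tail $b > B$ and invoking the Vandermonde--Chu identity
\[
\sum_{b=0}^{k-1} \binom{\ell - \ell' - 1 + b}{\ell - \ell' - 1} \binom{k-b + \ell' - 2}{\ell' - 1} \;=\; \binom{k+\ell-2}{\ell-1},
\]
we see that the main range ($b \le B$) contributes at most $k^{-c}\binom{k+\ell-2}{\ell-1}$.

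The tail estimate is where $\ell \le c\sqrt{k}$ plays its role. The sequence $b \mapsto \binom{\ell-\ell'-1+b}{\ell-\ell'-1}\binom{k-b+\ell'-2}{\ell'-1} / \binom{k+\ell-2}{\ell-1}$ is a Beta--Binomial-type distribution concentrating around $b \approx k(\ell-\ell'-1)/(\ell-2)$, so the mean of $k-b$ is $\Theta(k\ell'/\ell) \gtrsim \sqrt{k}\log k$, which is well above $k^{1/2}$. A direct tail bound then shows that the mass of this distribution on $b > B$ is at most $c^{\,\ell'}\binom{\ell-1}{\ell'-1}k^{-\ell'/2}$ up to constants, and the bound $\binom{\ell-1}{\ell'-1} \le \ell^{\ell'-1} \le k^{(\ell'-1)/2}$ (using $\ell \le c\sqrt{k}$ again) reduces the tail contribution to $k^{-1/2}c^{\,\ell'}\binom{k+\ell-2}{\ell-1}$, which is easily absorbed.

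The main obstacle is threading the constants. The Ajtai--Komlós--Szemerédi prefactor $(16\ell'/\log k)^{\ell'-2}$ at level $\ell' = c\log k$ is a genuine polynomial saving $k^{-c\log(1/(16c))}$ only when $c < 1/(16e)$, and this is what the ``sufficiently small $c$'' hypothesis of the theorem provides. The lower bound $\ell \ge c\log k$ is precisely what places $\ell'$ legitimately in $[c\log k,\,\ell]$ so the iteration is non-trivial, while the upper bound $\ell \le c\sqrt{k}$ controls the Beta--Binomial tail above. Everything else is routine bookkeeping.
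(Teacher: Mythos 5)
Your proof is correct and rests on the same overall strategy as the paper -- iterate Erd\H{o}s--Szekeres down to a regime where Theorem~\ref{thm:AKSz} yields a polynomial saving, and separately control the paths for which the second coordinate has become small -- but the decomposition is genuinely different. The paper uses an \emph{adaptive} stopping rule: each Erd\H{o}s--Szekeres path is stopped the first time it reaches a pair $(\ell',k')$ with $\ell' \lesssim c\log k'$ (encoded via the quantity $m(L)$ in Lemma~\ref{lem:running:ESz}), so that the Ajtai--Koml\'os--Szemer\'edi prefactor $(8\ell'/\log k')^{\ell'-2}$ is always a clean polynomial saving in $k'$, and the only loss comes from the rare paths with $k' < \sqrt{k}$ (Lemma~\ref{lem:Rodl:few:bad:seqs}). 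You instead stop every path at the \emph{fixed} level $\ell' = \lfloor c\log k\rfloor$ and carry out an explicit lattice-path count; this is precisely why you need the extra split at $B = k - \sqrt{k}$, since at a fixed $\ell'$ the prefactor $(8\ell'/\log(k-b))^{\ell'-2}$ is only controlled when $k - b$ is still comparable to $k$. The tail estimates are morally the same beta-binomial computation in both proofs (the paper's $(\ell t / k)^{c\log t}$ with $t = 2\sqrt{k}$ is essentially your $c^{\ell'}k^{-\ell'/2}$ factor, and the hypothesis $\ell \le c\sqrt{k}$ enters in exactly the same place), and the final absorption of constants is identical. The paper's adaptive decomposition is slightly more elegant in that the AKSz saving never needs to be rescued; your version buys a closed-form Vandermonde--Chu identity in place of the paper's inductive Lemma~\ref{lem:running:ESz}. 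Both routes are valid and give comparable constants.
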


Since R\"odl's method seems to be even less well known than that of Ajtai, Komlós and Szemerédi, we will give the details. The idea is to apply the Erd\H{o}s--Szekeres inequality 
\begin{equation}\label{ineq:ESz}
R(\ell,k) \le R(\ell-1,k) + R(\ell,k-1)
\end{equation}
repeatedly, stopping when we reach a pair $(\ell',k')$ with $\ell' \le c \log k'$. We then apply the Ajtai--Komlós--Szemerédi bound, Theorem~\ref{thm:AKSz}, winning a small polynomial factor over the Erd\H{o}s--Szekeres bound as long as $k'$ is not too small. To complete the proof, we will bound the probability that a random Erd\H{o}s--Szekeres path does not pass through a pair $(\ell',k')$ with $\ell' \le c \log k'$ until $k'$ is small. To be precise, for each set $L \in {[k+\ell-2] \choose \ell-1}$, define 
$$m(L) = \max\big\{ 1 \le m \le k+\ell-2 \,:\, |L \cap [m]| \le c \log m \,\text{ or }\, [m] \subset L \big\}$$ 
and set
$$\ell'(L) = |L \cap [m(L)]|+1 \qquad \text{and} \qquad k'(L) = m(L) - \ell'(L) + 2.$$ 
The following inequality will allow us to bound $R(\ell,k)$ using Theorem~\ref{thm:AKSz}. 

\begin{lemma}\label{lem:running:ESz}
For every $\ell,k \in \N$, we have 
\begin{equation}\label{eq:Rodl:sum}
R(\ell,k) \le \sum_{L \hspace{0.02cm} \in \hspace{0.02cm} {[k+\ell-2] \choose \ell-1}} {k'(L) + \ell'(L) - 2 \choose \ell'(L)-1}^{-1} R\big( \ell'(L), k'(L) \big).
\end{equation}
\end{lemma}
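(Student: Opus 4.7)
The plan is to derive the inequality by iterating the Erd\H{o}s--Szekeres recursion~\eqref{ineq:ESz} down to a carefully chosen \emph{stopping region} of pairs, and then re-parameterizing the resulting sum over stopping pairs as a sum over lattice paths.

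First, define the stopping region
$$\cS = \big\{(\ell^*, k^*) \in \N^2 : \ell^* - 1 \le c \log(\ell^* + k^* - 2) \big\} \cup \big\{(\ell^*, 1) : \ell^* \in \N \big\}.$$
Applying $R(a,b) \le R(a-1,b) + R(a,b-1)$ starting at $(\ell, k)$ and halting at every pair in $\cS$, a routine induction on $\ell + k$ gives
$$R(\ell, k) \le \sum_{(\ell^*, k^*)} N(\ell^*, k^*) \, R(\ell^*, k^*),$$
where $N(\ell^*, k^*)$ counts the downward lattice paths from $(\ell, k)$ whose first pair in $\cS$ is $(\ell^*, k^*)$.

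Next I would re-parameterize this as a sum over \emph{full} lattice paths. Every partial path from $(\ell, k)$ to its first stopping pair $(\ell^*, k^*)$ extends in $\binom{\ell^* + k^* - 2}{\ell^* - 1}$ ways to a full lattice path down to $(1,1)$. Reversing each such full path gives an upward path from $(1,1)$ to $(\ell, k)$, naturally encoded by the subset $L \in \binom{[k+\ell-2]}{\ell-1}$ of time steps at which the path increments its first coordinate. With this convention, at time $m$ the upward path sits at $\bigl(1 + |L \cap [m]|,\, 1 + m - |L \cap [m]|\bigr)$, so the bound above rewrites as
$$R(\ell, k) \le \sum_L \binom{\ell^*(L) + k^*(L) - 2}{\ell^*(L) - 1}^{-1} R(\ell^*(L), k^*(L)),$$
where $(\ell^*(L), k^*(L))$ denotes the first entry into $\cS$ of the downward path corresponding to $L$.

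To match this with the statement of the lemma, I would identify $(\ell^*(L), k^*(L))$ with $(\ell'(L), k'(L))$: the first entry into $\cS$ going \emph{down} is the last visit to $\cS$ going \emph{up}, and the upward path lies in $\cS$ at time $m$ precisely when $|L \cap [m]| \le c \log m$ (main part of $\cS$) or $[m] \subset L$ (the $k^* = 1$ boundary). The latest such $m$ is $m(L)$ by definition, and the coordinate formulas for $\ell'(L)$ and $k'(L)$ then follow directly from the expression above for the position of the upward path. The main obstacle is that $\cS$ is \emph{not} closed under downward moves — decreasing $k^*$ can take a pair outside $\cS$, so a downward path can enter and leave $\cS$ several times — which is exactly why $m(L)$ is defined using a \emph{maximum} rather than a minimum, and is also the place where the bookkeeping of $N(\ell^*, k^*)$ in the first step needs to be handled with some care.
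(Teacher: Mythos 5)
Your argument is correct, but it takes a different route from the paper's main proof, which is a direct induction on $\ell+k$: the base case is $(\ell,k)\in\cS$ (where every $L$ has $(\ell'(L),k'(L))=(\ell,k)$, so the sum telescopes to $R(\ell,k)$), and the inductive step applies~\eqref{ineq:ESz} and then matches the sets in ${[k+\ell-3]\choose\ell-1}\cup{[k+\ell-3]\choose\ell-2}$ bijectively with those in ${[k+\ell-2]\choose\ell-1}$, using $\ell>c\log(k+\ell-2)+1$ and $k\ge2$ to check that the pair $(\ell'(L),k'(L))$ is unchanged under this matching. What you are doing instead — iterate the recursion to the stopping region, getting a sum of $N(\ell^*,k^*)\,R(\ell^*,k^*)$, then re-parameterize by appending the $\binom{\ell^*+k^*-2}{\ell^*-1}$ extensions of each stopped path to a full path, encoded by $L\in{[k+\ell-2]\choose\ell-1}$ — is precisely the content of the paper's one-sentence remark after the proof (``Alternatively, note that there are $\binom{k'+\ell'-2}{\ell'-1}$ (or zero) sets $L$ ...''), spelled out in detail. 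Both arguments hinge on the same key observations, which you handle correctly: $(\ell'(L),k'(L))$ is the position of the upward $L$-path at time $m(L)$, and $m(L)$ being defined as a \emph{maximum} is exactly what makes the first downward entry into $\cS$ well-defined even though $\cS$ is not downward-closed in $k^*$. Your route is arguably more transparent (the inverse binomial coefficient appears for a visible combinatorial reason), at the cost of a slightly more elaborate setup; the paper's induction is shorter on the page but requires the reader to unwind the bijection $L\mapsto L'$ to see where the coefficient comes from.
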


\begin{proof}
The proof is by induction; note that it holds trivially if either $\ell - 1 \le c \log(k+\ell-2)$ or $k = 1$, since then $\ell'(L) = \ell$ and $k'(L) = k$ for every $L \in {[k+\ell-2] \choose \ell-1}$. We may therefore assume that $\ell > c \log(k+\ell-2) + 1$, that $k \ge 2$, and that the inequality is true for the pairs $(\ell-1,k)$ and $(\ell,k-1)$. By~\eqref{ineq:ESz} and the induction hypothesis, it follows that
$$R(\ell,k) \, \le \sum_{L \hspace{0.02cm} \in \hspace{0.02cm} {[k+\ell-3] \choose \ell-1} \hspace{0.02cm} \cup \hspace{0.02cm} {[k+\ell-3] \choose \ell-2}} {k'(L) + \ell'(L) - 2 \choose \ell'(L) - 1}^{-1} R\big( \ell'(L), k'(L) \big).$$
Now, since $\ell > c \log(k+\ell-2) + 1$ and $k \ge 2$, it follows that if a set $L' \in {[k+\ell-2] \choose \ell-1}$ is either equal to $L \in {[k+\ell-3] \choose \ell-1}$, or is obtained from $L \in {[k+\ell-3] \choose \ell-2}$ by adding the element $k+\ell-2$, then $\ell'(L) = \ell'(L')$ and $k'(L) = k'(L')$, and hence this is exactly the claimed inequality. 
\end{proof}

Alternatively, note that there are ${k' + \ell'-2 \choose \ell'-1}$ (or zero) sets $L$ with $\ell'(L) = \ell'$ and $k'(L) = k'$ that have a given intersection with the set $\{k'+\ell'-1,\ldots,k+\ell-2\}$.

Now, for each $k,\ell \in \N$, define 
$$\cL(k,\ell) = \bigg\{ L \in {[k+\ell-2] \choose \ell-1} \,:\, k'(L) \ge \sqrt{k} \, \bigg\}.$$
The following simple lemma shows that almost all sets in ${[k+\ell-2] \choose \ell-1}$ are also in $\cL(k,\ell)$. 

\begin{lemma}\label{lem:Rodl:few:bad:seqs}
If\/ $k \in \N$ is sufficiently large and $2 \le \ell \le c\sqrt{k}$, then 
$$\bigg| {[k+\ell-2] \choose \ell-1} \setminus \cL(k,\ell) \bigg| \le k^{-2c} {k + \ell - 2 \choose \ell-1}.$$
\end{lemma}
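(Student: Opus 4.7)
The plan is to rewrite the condition $k'(L) < \sqrt{k}$ structurally, and then split the bad sets $L$ into two groups according to which clause in the definition of $m(L)$ is realised at $m = m(L)$. Since $k'(L) - 1 = m(L) - |L \cap [m(L)]|$ equals the number of elements of $[m(L)]$ that lie outside $L$, the condition $k'(L) < \sqrt{k}$ is equivalent to $|[m(L)] \setminus L| \le \sqrt{k} - 2$, that is, $L$ misses fewer than $\sqrt{k} - 1$ elements of the initial segment $[m(L)]$.

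In \emph{Case (a)}, the first clause $|L \cap [m(L)]| \le c \log m(L)$ holds at $m(L)$; combined with the badness condition this gives $m(L) \le \sqrt{k} - 2 + c \log m(L) \le \sqrt{k} + c \log N$, which is strictly less than $2\sqrt{k}$ for $c$ small and $k$ large (using $N \le 2k$). The maximality of $m(L)$ then forces neither clause to hold at $m = M := \lceil 2\sqrt{k}\rceil > m(L)$, so in particular $|L \cap [M]| > c \log M > c(\log k)/2$, which gives $|L \cap [M]| \ge t$ for $t := \lceil c(\log k)/2 \rceil + 1$. In \emph{Case (b)}, the first clause fails at $m(L)$, so the second must hold: $[m(L)] \subset L$, forcing $1 \in L$. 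Thus the bad family is contained in $A \cup B$, where $A = \{L : |L \cap [M]| \ge t\}$ and $B = \{L : 1 \in L\}$, and it remains to show that $|A|, |B| \le \tfrac{1}{2}\hspace{0.03cm} k^{-2c}\binom{N}{\ell - 1}$, where $N = k + \ell - 2$.

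The bound on $|B|$ is immediate: $|B| = \binom{N-1}{\ell-2} = \tfrac{\ell-1}{N}\binom{N}{\ell-1} \le (c/\sqrt{k})\binom{N}{\ell-1}$, which suffices once $c < 1/4$ and $k$ is sufficiently large (since then $k^{1/2 - 2c} \to \infty$). For $|A|$, the key estimate is $\Pr[|L \cap [M]| = j] \le \binom{\ell - 1}{j}(M/N)^j \le (3ec/j)^j$, obtained by viewing $L$ as a sequential uniform sample and using $M \le 3\sqrt{k}$, $\ell - 1 \le c\sqrt{k}$ and $N \ge k$. Since $t \ge 6ec$ for $k$ large, summing over $j \ge t$ via a geometric-series bound yields $|A|/\binom{N}{\ell - 1} \le 2(6e/\log k)^{c(\log k)/2} = 2\exp\bigl(-(c\log k/2)(\log\log k - \log 6e)\bigr)$, which is much smaller than $k^{-2c}$ whenever $\log\log k$ exceeds a fixed constant depending on $c$. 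The main obstacle will be tracking the constants carefully enough that Cases (a) and (b) each contribute at most half of $k^{-2c}\binom{N}{\ell - 1}$; this is routine but slightly fiddly, and ultimately rests on the super-constant growth of $\log\log k$ absorbing the loose constants in the sampling bound.
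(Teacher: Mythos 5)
Your proof is correct and follows essentially the paper's one-line argument: both deduce that every bad $L$ has $m(L) < 2\sqrt{k}$, hence the defining condition fails at $M \approx 2\sqrt{k}$, forcing $|L \cap [M]| > c\log M$, and then count such sets by the same binomial estimate (yours phrased probabilistically, the paper's via $\binom{t}{c\log t}\binom{k+\ell-2}{\ell-1-c\log t}$). Your Case~(b) is an unnecessary detour, since $[m(L)] \subset L$ already forces $m(L) \le \ell - 1 \le c\sqrt{k} < 2\sqrt{k}$, so the Case~(a) conclusion $|L \cap [M]| \ge t$ holds there as well and the separate bound on $|B|$ can be dropped.
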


\begin{proof}
If $\ell'(L) + k'(L) < t$, then $|L \cap [t]| > c \log t$. The number of sets $L \in {[k+\ell-2] \choose \ell-1}$ for which this is true is at most
$${t \choose c \log t} {k+\ell-2 \choose \ell - 1 - c \log t} \le \bigg( \frac{\ell \cdot t}{k} \bigg)^{c\log t} {k + \ell - 2 \choose \ell - 1}.$$
Applying this with $t = 2\sqrt{k}$ gives the claimed bound. 
\end{proof}

We can now easily deduce R\"odl's theorem.

\begin{proof}[Proof of Theorem~\ref{cor:Rodl}]
By Lemma~\ref{lem:running:ESz}, it will suffice to bound the right-hand side of~\eqref{eq:Rodl:sum}. When $L \not\in \cL(k,\ell)$, we do so using the usual Erd\H{o}s--Szekeres bound~\eqref{eq:ESz:bound}, which allows us to bound each summand by $1$. On the other hand, if $L \in \cL(k,\ell)$ and $\ell \le c\sqrt{k}$, then 
$$\ell'(L) = c \log k'(L) + O(1).$$ 
Therefore, if $k$ is sufficiently large, then by Theorem~\ref{thm:AKSz} we have
$${k'(L) + \ell'(L) - 2 \choose \ell'(L) - 1}^{-1} R\big( \ell'(L), k'(L) \big) \le \bigg( \frac{8\ell'(L)}{\log k'(L)} \bigg)^{\ell'(L)-2} \le \hspace{0.02cm} k^{-2c}.$$
Hence, by Lemmas~\ref{lem:running:ESz} and~\ref{lem:Rodl:few:bad:seqs}, 
we deduce that
$$R(\ell,k) \le 2 \cdot k^{-2c} \hspace{0.02cm} {k + \ell - 2 \choose \ell - 1},$$
as required. 
\end{proof}

\section{Ramsey numbers closer to the diagonal}\label{closer:sec}

In this section we will sketch the proof of the following theorem of Gupta, Ndiaye, Norin and Wei~\cite{GNNW}, which they obtained using a streamlined and optimised version of the method of Campos, Griffiths, Morris and Sahasrabudhe~\cite{CGMS}. 

\begin{theorem}[Gupta, Ndiaye, Norin and Wei, 2024+]\label{thm:GNNW}
There exists $C > 0$ such that  
\begin{equation}\label{eq:GNNW:thm}
R(\ell,k) \le k^C \bigg( \frac{\sqrt{5} + 1}{4} \bigg)^\ell {k + \ell \choose \ell}
\end{equation}
for every $\ell,k \in \N$ with $\ell \ll k$. 
\end{theorem}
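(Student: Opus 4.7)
The plan is to prove the bound by induction on $\ell+k$, using the inductive version of the ``book algorithm'' due to Gupta, Ndiaye, Norin and Wei, which reorganises the proof of Campos, Griffiths, Morris and Sahasrabudhe~\cite{CGMS}. Writing $\alpha = (\sqrt{5}+1)/4$ and fixing a large absolute constant $C$, the inductive statement will be that if $n \ge k^C \alpha^\ell \binom{k+\ell}{\ell}$, then every red-blue colouring of $E(K_n)$ contains either a red $K_\ell$ or a blue $K_k$. For the base cases I would appeal to Theorem~\ref{thm:AKSz}, which in the range $\ell = O(\log k)$ already gives a stronger bound than what we are trying to prove, and to the trivial identity $R(\ell,1) = 1$; the induction step will reduce to strictly smaller pairs $(\ell',k')$.

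At the heart of the argument is the analysis of a book-building process. A \emph{book} is a pair $(S,T)$ where $S \subseteq V(K_n)$ is a red clique and $T$ is the common red-neighbourhood of $S$; the key auxiliary parameter is the red-density $p$ of the colouring induced on $T$. The process extends the book by one of two moves. In a \emph{red step}, a vertex $w \in T$ with red-degree at least $p|T|$ in $T$ is appended to $S$, and $T$ is replaced by $N_{\text{red}}(w) \cap T$, shrinking by a factor of at most $1/p$ while decreasing the residual target clique size by one. In a \emph{density-boost step}, the absence of such a $w$ means that all red-degrees in $T$ are uniformly low, from which a short averaging argument produces a subset $T' \subseteq T$ of comparable size on which the red-density strictly exceeds $p$. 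The GNNW simplification is precisely that this density-boost can be implemented by appealing to the induction hypothesis on a slightly smaller instance, replacing the intricate density-boost lemma of~\cite{CGMS} by a clean recursion.

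The main calculation is to optimise the threshold density $p$. Each red step shrinks $|T|$ by a factor of roughly $p$ while using one unit of $\ell$; each density-boost step shrinks $|T|$ only mildly, but pays for itself by pushing the density up by a controlled increment. Balancing these two rates against the binomial growth of $\binom{k+\ell}{\ell}$ yields a quadratic recurrence whose characteristic equation is $4\alpha^2 = 2\alpha + 1$, with positive root precisely $\alpha = (\sqrt{5}+1)/4$; this is where the golden ratio enters. The polynomial prefactor $k^C$ absorbs cumulative error terms from discretising the densities, from the logarithmic number of boost steps, and from the slack in passing between inductive instances, and lets us avoid having to track $p$ with high precision. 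The hard part will be controlling $p$ across many consecutive red steps: naively, restricting to a red-neighbourhood could move the density in either direction, so one must either construct a potential function that is monotone along the process, or (as in GNNW) build the dependence on $p$ directly into the inductive statement so that any deterioration is automatically absorbed by the stronger bound applied to a smaller instance. Once this is in place, summing the contributions along the process produces the claimed factor $\alpha^\ell$ and closes the induction.
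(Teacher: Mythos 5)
Your proposal captures the general flavour of the CGMS/GNNW line of work — a book-building process, a rate-versus-growth balance, a characteristic equation whose root is $(\sqrt{5}+1)/4$ — but it misses the specific technical mechanism that makes the GNNW reorganisation work, and the step you acknowledge as ``the hard part'' is exactly where the gap lies.

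Your setup has a single shrinking set $T$ (the red common neighbourhood of the growing clique $S$), and your process alternates between ``red steps'' (shrink $T$ by a factor $p$) and ``density-boost steps'' (pass to a denser sub-piece of $T$). This is the architecture of the original CGMS argument, not the GNNW one. The difficulty you flag — that one restriction to a red neighbourhood can move the density in either direction, and so naively one must invent a potential function or track $p$ delicately across many consecutive steps — was precisely the pain point of the original proof. GNNW's contribution (Lemma~\ref{lem:GNNW} in the paper) is a different structure that dissolves this difficulty rather than managing it. They keep \emph{two} disjoint reservoir sets $X$ and $Y$: $X$ is in the common red-neighbourhood of $A$ and the common blue-neighbourhood of $B$, while $Y$ is only required to be in the common blue-neighbourhood of $B$. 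In every step, whether the chosen vertex $x$ is added to $A$ or to $B$, the set $Y$ is replaced by $N_B(x) \cap Y$ (always the \emph{blue} neighbourhood). The quantity tracked is not a density at all but the single scalar excess $f_q(X,Y) = e_B(X,Y) - q|X||Y|$ for a \emph{fixed} threshold $q$, and the recursion (with the parameters $\gamma$ and $q-\gamma$ controlling the two kinds of moves) closes directly by induction on $(m,\ell,k)$, with the escape clause being that if neither move is available then $|Y|$ is already so large that it contains one of the target cliques. There are no density-boost steps and no need to track $p$ along the process — that is the whole point of the reorganisation. Optimising $\gamma$ subject to $p^2 = (1-\gamma)(q-\gamma)$ with $q = 1-p$ is what produces $(\sqrt{5}+1)/4$.

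So while your numerology is consistent, the argument as you describe it cannot close the induction without a density-control lemma that you do not supply, and which the actual GNNW proof avoids entirely by tracking $f_q(X,Y)$ for fixed $q$ instead of the density itself. You should also note that the two-set structure with $Y$ inside the common \emph{blue} neighbourhood (rather than a single red book) is essential: it is what makes ``replace $Y$ by $N_B(x) \cap Y$'' a move that is always legal, regardless of whether $x$ is added to $A$ or to $B$.
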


Note that this implies Theorem~\ref{thm:off:diagonal} for all $\log k \ll \ell \ll k$. The proof of Theorem~\ref{thm:GNNW} given in~\cite{GNNW} is quite short, but not very transparent, and requires some careful calculation, which we would rather avoid. We will therefore restrict ourselves to describing the main ideas, and refer the reader to~\cite[Section~2]{GNNW} for the details. 

To warm ourselves up for the proof, let us first consider the following slightly weaker version of the Erd\H{o}s--Szekeres bound~\eqref{eq:ESz:bound}:
\begin{equation}\label{eq:ESz:weaker}
R(\ell,k) \le \bigg( \frac{k+\ell}{\ell} \bigg)^\ell \bigg( \frac{k+\ell}{k} \bigg)^k.
\end{equation}
To prove~\eqref{eq:ESz:weaker}, we will build a red clique $A$ and a blue clique $B$ by adding one vertex at a time to one of the two cliques. To be more precise, suppose we have three sets $A$, $B$ and $X$, and that all edges inside $A$ and between $A$ and $X$ are red, and all edges inside $B$ and between $B$ and $X$ are blue. 
Choose any vertex $x \in X$, add $x$ to $A$ if 
$$\pigl| N_R(x) \cap X \pigr| \ge \bigg( \frac{\ell}{k+\ell} \bigg) |X|,$$
and otherwise add $x$ to $B$. Moreover, replace $X$ by either $N_R(x)$ or $N_B(x)$, so that the edges between the sets are still all the same colour. If $n$ is at least the right-hand side of~\eqref{eq:ESz:weaker}, then we can continue until either $A$ has size $\ell$, or $B$ has size $k$, as required.
 
To improve the bound~\eqref{eq:ESz:weaker}, we will introduce a new set $Y$, which is contained in the common blue neighbourhood of the vertices in $B$ (see Figure~\ref{fig:book:alg}), and attempt to control the density of blue edges between $X$ and $Y$ as we build the 
cliques $A$ and $B$. 

\begin{figure}[t]
\centering
\includegraphics[width=0.48\textwidth]{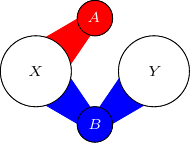}
\caption{The setting of the proof of Theorem~\ref{thm:GNNW}.}
\label{fig:book:alg}
\end{figure}

The first step is to choose an initial pair of sets $X$ and $Y$ that have many blue edges between them. If the density of blue edges is high enough, then we simply do so by choosing a random bipartition of the vertices; if not, then we choose a vertex $v$ of maximum red degree, and work instead inside $N_R(v)$, with $\ell$ replaced by $\ell - 1$. 

To be slightly more precise, Gupta, Ndiaye, Norin and Wei perform this step using the following induction hypothesis:
\begin{equation}\label{eq:GNNW:induction}
R(\ell,k) \le 4\gigl( k + \ell \gigr) \bigg( \frac{k + 2\ell}{k} \bigg)^{k/2} p^{-\ell}
\end{equation}
for every $k,\ell \in \N$ with $\ell \le k$, where
$$p = \frac{4}{\sqrt{5} + 1} \bigg( \frac{\ell}{k + 2\ell} \bigg).$$
By the induction hypothesis, we may assume that every vertex has red degree at most $pn$, and hence there exists a partition $V(K_n) = X \cup Y$ such that the density of blue edges between $X$ and $Y$ is at least $1 - p$. 

The idea is now to show that we can find either a blue copy of $K_k$ in $X \cup Y$, or a red copy of $K_\ell$ inside either $X$ or $Y$. As in the proof of~\eqref{eq:ESz:weaker}, we do so by choosing one vertex $x \in X$ in each step, moving it to either $A$ or $B$, and shrinking the sets $X$ and $Y$. However, perhaps surprisingly, in either case we replace $Y$ by $N_B(x) \cap Y$, the \emph{blue} neighbourhood of $x$. To be more precise, in each step of the algorithm we make one of the following moves:\vskip0.1cm 
\begin{itemize}
\item[$(a)$] add $x$ to $A$ and update $X \to N_R(x) \cap X$ and $Y \to N_B(x) \cap Y$, or\vskip0.1cm
\item[$(b)$] add $x$ to $B$ and update $X \to N_B(x) \cap X$ and $Y \to N_B(x) \cap Y$.\vskip0.1cm
\end{itemize}
The motivation behind this is that we are happy in case $(b)$ unless the density of blue edges between $N_B(x) \cap X$ and $N_B(x) \cap Y$ is significantly lower than between $X$ and $Y$, and if that happens then the density of blue edges between $N_R(x) \cap X$ and $N_B(x) \cap Y$ must be significantly higher, which `pays' for the loss in the size of $Y$. 

The beautiful innovation of Gupta, Ndiaye, Norin and Wei is that when running this algorithm, it is sufficient to track only the `excess' number of blue edges between $X$ and $Y$ above some fixed density $q$. That is, they show that if 
$$f_q(X,Y) = e_B(X,Y) - q|X||Y|$$
is at least a certain quantity (depending on $q$), then we can find one of the monochromatic cliques that we are looking for. In fact, for the induction hypothesis we need a slightly more general statement, since in the middle of the algorithm we are looking for a blue clique of size $k - |B|$ in $X \cup Y$, a red clique of size $\ell - |A|$ in $X$, or a red clique of size $\ell$ in $Y$. 

\begin{lemma}\label{lem:GNNW}
Let $X$ and $Y$ be disjoint sets, let $0 < \gamma < q < 1$, and let $k,\ell,m \in \N$. If
$$f_q(X,Y) \ge \gigl( k + m \pigr) \gamma^{-k} \gigl( 1 - \gamma \gigr)^{-\ell} \gigl( q - \gamma \gigr)^{-m},$$
then there exists either a red $K_m$ in $X$, a red $K_\ell$ in $Y$, or a blue $K_k$ in $X \cup Y$. 
\end{lemma}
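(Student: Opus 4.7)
My plan is to prove Lemma~\ref{lem:GNNW} by induction on $k+\ell+m$, the base cases being trivial: whenever any of $k,\ell,m$ equals zero, the corresponding empty monochromatic clique lives in the required location. For the inductive step assume $k,\ell,m\geq 1$, write $F(k,\ell,m) := (k+m)\gamma^{-k}(1-\gamma)^{-\ell}(q-\gamma)^{-m}$ for the threshold, and argue by contradiction: suppose no red $K_m$ in $X$, no red $K_\ell$ in $Y$, and no blue $K_k$ in $X\cup Y$ exists.

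Three types of ``move'' are available, each reducing one parameter by one:
\begin{itemize}
\item[(a)] pick $x\in X$, pass to $(N_R(x)\cap X,\,N_B(x)\cap Y)$, reducing $m\to m-1$;
\item[(b)] pick $x\in X$, pass to $(N_B(x)\cap X,\,N_B(x)\cap Y)$, reducing $k\to k-1$;
\item[(c)] pick $y\in Y$, pass to $(X,\,N_R(y)\cap Y)$, reducing $\ell\to\ell-1$.
\end{itemize}
The three identities
\begin{equation*}
\frac{F(k-1,\ell,m)}{F(k,\ell,m)} = \gamma\cdot\tfrac{k+m-1}{k+m}, \qquad \frac{F(k,\ell,m-1)}{F(k,\ell,m)} = (q-\gamma)\cdot\tfrac{k+m-1}{k+m}, \qquad \frac{F(k,\ell-1,m)}{F(k,\ell,m)} = 1-\gamma
\end{equation*}
dictate exactly how much excess $f_q$ may drop in each move. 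If the inductive conclusion fails in each of (a)--(c) for every possible choice of vertex, we obtain strict upper bounds on $f_q$ over all the corresponding restricted pairs, and the task is to aggregate these obstructions into a contradiction with $f_q(X,Y)\geq F(k,\ell,m)$.

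For the aggregation I would partition $X = X_R \cup X_B$ according to whether the red density of $x$ inside $X$ exceeds some threshold tuned so that the $\gamma$ and $q-\gamma$ weights come out naturally, apply the failure of move (a) for $x\in X_R$ and of move (b) for $x\in X_B$, and sum. A swap-of-summation identity of the form $\sum_{x\in X_B} e_B(N_B(x)\cap X,\,N_B(x)\cap Y) = \sum_{uv\in E_B(X,Y)} \pigl|N_B(u)\cap N_B(v) \cap X_B\pigr|$, together with its red analogue for (a), converts the aggregated failure bound into a linear expression in $e_B(X,Y)$ and related edge counts whose leading term is a constant multiple of $f_q(X,Y)$. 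A separate (and simpler) averaging over $y\in Y$ handles (c) and produces an extra $(1-\gamma)f_q(X,Y)$ bound. Comparing with the weighted sum of the three thresholds, the coefficient $(k+m)$ in $F$ arises precisely because (c) leaves $X$ untouched and so contributes only the multiplicative $(1-\gamma)$ factor, rather than an additive decrement of the form $(k+m-1)/(k+m)$ that (a) and (b) do contribute.

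The main obstacle, I expect, is arranging the threshold defining $X_R$ vs.\ $X_B$ so that both aggregated bounds are simultaneously tight: too aggressive a threshold spoils the (b)-bound, too conservative a threshold spoils the (a)-bound, and the coefficient $q$ in $f_q$ must align with the $q$ in $(q-\gamma)$ so that the cross-terms from the double counting cancel. Once the threshold is chosen correctly, the three aggregated inequalities combine to force $f_q(X,Y) < F(k,\ell,m)$, contradicting the hypothesis and closing the induction.
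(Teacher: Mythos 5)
Your proposal diverges from the paper's proof in two structurally important ways, and I believe the second one is a genuine gap.

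First, the paper does not aggregate an obstruction over all of $X$ and $Y$. Instead it first chooses a \emph{single} vertex $x \in X$ by a convexity argument so that $f_q\pigl(X, N_B(x)\cap Y\pigr) \ge q \cdot f_q(X,Y)$ (averaging over $x\in X$ gives $\frac{1}{|X|}\sum_{y\in Y} d_B(y;X)\,g(y) = \frac{1}{|X|}\sum_y g(y)^2 + q f_q(X,Y)$, where $g(y) = d_B(y;X) - q|X|$, so the average is $\ge q f_q(X,Y)$). For that one $x$, moves (a) and (b) both restrict $Y$ to the \emph{same} set $N_B(x)\cap Y$, and the exact additive identity
$$f_q\pigl(N_B(x)\cap X, N_B(x)\cap Y\pigr) + f_q\pigl(N_R(x)\cap X, N_B(x)\cap Y\pigr) = f_q\pigl(X, N_B(x)\cap Y\pigr) - (1-q)\pigl|N_B(x)\cap Y\pigr|$$
gives a clean trichotomy with no double counting. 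Your plan of partitioning $X = X_R \cup X_B$ and summing failure bounds applies (a) and (b) to disjoint pieces of $X$, so you lose precisely this complementarity; it is not obvious the resulting aggregation closes.

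Second, and more seriously, the paper has no recursive move that decreases $\ell$. Your move (c) passes to $\pigl(X, N_R(y)\cap Y\pigr)$, and to use induction you need some $y\in Y$ with $f_q\pigl(X, N_R(y)\cap Y\pigr) \ge (1-\gamma)f_q(X,Y)$. Averaging over $y\in Y$, the left side has mean $\frac{1}{|Y|}\sum_{y'\in Y} d_R(y';Y)\,g(y')$, so the desired inequality amounts to $\sum_{y'}\bigl[d_R(y';Y) - (1-\gamma)|Y|\bigr]g(y') \ge 0$, i.e.\ a positive correlation between a vertex's red degree inside $Y$ and its excess blue degree to $X$. Nothing in the hypotheses provides such a correlation, and it is easy to cook up colourings where it fails. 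In the paper, the factor $(1-\gamma)^{-\ell}$ is consumed not by recursion but by a \emph{termination} argument: if both (a) and (b) fail for the chosen $x$, the identity above forces $|N_B(x)\cap Y| \gtrsim \frac{1}{k+m}f_q(X,Y) \ge \gamma^{-k}(1-\gamma)^{-\ell} \ge R(\ell,k)$, so $Y$ itself directly contains a red $K_\ell$ or a blue $K_k$ by the Erd\H{o}s--Szekeres bound. Replacing this termination with a third recursive move is not a cosmetic change; it is where your argument breaks.
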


The proof of this lemma is now straightforward. We first choose $x \in X$ so that  
$$f_q\big( X, N_B(x) \cap Y \big) \ge q \cdot f_q(X,Y),$$
which is possible by a simple convexity argument. Now, if 
$$f_q\big( N_B(x) \cap X, N_B(x) \cap Y \big) \ge \bigg( \frac{k+m-1}{k+m} \bigg) \cdot \gamma \cdot f_q(X,Y),$$
then we make move $(b)$, and apply the induction hypothesis. Similarly, if 
$$f_q\big( N_R(x) \cap X, N_B(x) \cap Y \big) \ge \bigg( \frac{k+m-1}{k+m} \bigg) \pigl( q - \gamma \pigr) \cdot f_q(X,Y),$$
then we make move $(a)$, and apply the induction hypothesis to complete the proof. The only remaining possibility is that $x$ has at least $\frac{1}{k+m} \cdot f_q(X,Y)$ neighbours in $Y$. But by our bound on $f_q(X,Y)$ this implies that $|Y| \ge R(\ell,k)$, and hence we can find either a red copy of $K_\ell$ or a blue copy of $K_k$ in $Y$, as required. Applying Lemma~\ref{lem:GNNW} with $q = 1 - p$ and $\gamma = 1 - \big( \frac{\sqrt{5}+1}{2} \big)$, so $p^2 = (1-\gamma)(q - \gamma)$, gives~\eqref{eq:GNNW:induction}, which then implies~\eqref{eq:GNNW:thm} for $\ell \ll k$.

To finish this section, let us state the following conjecture, which says that the bound given by Theorem~\ref{thm:GNNW} is still super-exponentially far from the truth. 

\begin{conjecture}\label{conj:off:diagonal}
For every fixed $C > 0$, we have
$$R(\ell,k) \le e^{-C \ell} {k + \ell \choose \ell}$$
for all sufficiently large $k,\ell \in \N$ with $\log k \ll \ell \ll k$. 
\end{conjecture}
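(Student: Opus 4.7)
The natural starting point is Theorem~\ref{thm:GNNW}, which already improves the Erd\H{o}s--Szekeres bound by a factor $\beta^\ell$ with $\beta = (\sqrt{5}+1)/4 \approx 0.809$. Since raising $\beta$ to a higher power of course does not change the base, the goal must be to iterate or refine the book argument so that the effective exponential rate strictly improves as $\ell$ grows. The plan I would pursue is to run the GNNW book algorithm in phases, splitting $\ell = \ell_1 + \cdots + \ell_T$ with $T = T(C)$, and after each phase restarting Lemma~\ref{lem:GNNW} on the residual sets $(X_i, Y_i)$ with a sharper parameter $\gamma_i$. The hope is to prove a \emph{density-preservation lemma} showing that, averaging over the vertices $x$ chosen in phase $i$, the bipartite blue density between $N_B(x) \cap X_i$ and $N_B(x) \cap Y_i$ exceeds $1 - p_i$ with $p_i \ll p_{i-1}$; compounding such gains over $T$ phases would, in principle, give a factor $e^{-C\ell}$ for any $C$.

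A second, complementary ingredient I would look for is to graft the Ajtai--Koml\'os--Szemer\'edi method of Section~\ref{AKSz:sec} onto the book framework. Whenever the red graph induced on a residual set $X_i$ contains few copies of $K_{\ell - |A|}$, one can try to invoke a Shearer-type inequality (in the spirit of Lemma~\ref{cor:few:triangles}) to extract a blue clique of the required size directly, short-circuiting the remainder of the algorithm. Each such shortcut saves a batch of book steps whose per-step cost is a factor of $\beta$, so many such shortcuts would compound in the exponent of $\ell$ and eventually beat any fixed $\beta$. I would also attempt to pair this with R\"odl's averaging technique from Section~\ref{Rodl:sec}, which is exactly the tool that lets one glue AKSz-type savings into an Erd\H{o}s--Szekeres recursion without losing a polynomial factor.

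The principal obstacle is that the constant $\beta$ in Lemma~\ref{lem:GNNW} is tight for the single-step book recursion: the extremal configurations are those in which moves $(a)$ and $(b)$ are equally costly, and it is not hard to cook up (random-like) colourings for which this near-extremality persists across many restarts. In particular, neither the density-preservation lemma nor the AKSz shortcut is guaranteed to apply unless one can show that these extremal configurations exhibit some \emph{additional} structure, for example a quasirandom red subgraph on $X_i$, or an unusually large blue-sparse subset of $Y_i$, which could then be used to re-enter the argument with strictly better parameters. Identifying and exploiting such a secondary structure seems to me to be the crux: it is essentially the same difficulty that separates the Erd\H{o}s--Szekeres bound from the true value of $R(\ell, k)$, and any substantial progress would simultaneously narrow the polynomial gap in~\eqref{eq:Rlk:bounds}. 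I would therefore expect genuine progress on Conjecture~\ref{conj:off:diagonal} to come alongside either a better upper bound for $R(5,k)$, a quantitative regularity lemma tailored to the book setup, or a wholly new non-greedy replacement for the vertex-by-vertex step in Lemma~\ref{lem:GNNW}.
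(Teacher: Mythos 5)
The statement you were asked about is a \emph{conjecture}, not a theorem: the paper does not prove it, and in fact explicitly remarks, immediately after stating it, that ``it seems that a proof of Conjecture~\ref{conj:off:diagonal} would require a significant new idea.'' So there is no ``paper's own proof'' to compare against, and your text should not be read as a proof attempt in the first place --- nor do you present it as one.

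What you have written is a research programme, not an argument. The two strands you propose --- (i) iterating the book algorithm in phases and trying to show the blue bipartite density between $X$ and $Y$ improves from phase to phase, and (ii) grafting an Ajtai--Koml\'os--Szemer\'edi / Shearer shortcut onto the book framework and gluing the savings via R\"odl's averaging over Erd\H{o}s--Szekeres paths --- are sensible directions, and you correctly identify the central obstruction: the constant $(\sqrt{5}+1)/4$ in Lemma~\ref{lem:GNNW} is an artefact of a single-step recursion that is tight for near-extremal configurations, and nothing in the current machinery forces such configurations to carry exploitable secondary structure. Your observation that genuine progress would likely be entangled with improving the polynomial gap in~\eqref{eq:Rlk:bounds} (e.g.\ a better upper bound for $R(5,k)$) is consistent with the paper's framing. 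However, none of the key steps you gesture at --- the ``density-preservation lemma,'' the criterion under which the AKSz shortcut applies, the compounding of savings across phases without a polynomial loss --- are formulated precisely, let alone proved, and you yourself flag them as unresolved. So the proposal contains no gap to point to in the usual sense: it is, accurately, a description of where the gaps are. That matches the paper's own assessment that the conjecture is open and a new idea is needed.
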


It seems that a proof of Conjecture~\ref{conj:off:diagonal} would require a significant new idea. 

\section{An improved lower bound near to the diagonal}\label{MSX:sec}

How far is Theorem~\ref{thm:GNNW} from the lower bound? If we take the red edges to be a copy of $G(n,p)$, then a standard application of the Lovász Local Lemma (as in~\cite{S77}) implies that
\begin{equation}\label{eq:lower:LLL}
R(\ell,k) \ge \bigg( \frac{k}{\ell \cdot \log(k/\ell)} \bigg)^{(\ell+1)/2},
\end{equation}
for all $1 \ll \ell \ll k$. When $\ell = \Theta(k)$, however, the bound given by the local lemma is only a constant factor stronger than that given by a simple 1st moment argument:
\begin{equation}\label{eq:lower:random}
R(\ell,k) \ge p^{-\ell/2} \qquad \text{where} \qquad \frac{k}{\ell} = \frac{\log p}{\log(1-p)}.
\end{equation}
In particular, note that if $\ell = k$ then this reduces to Erd\H{o}s' bound $R(k) \ge 2^{-k/2}$.  

In a significant breakthrough, the bound given by $G(n,p)$ was finally improved earlier this year by Ma, Shen and Xie~\cite{MSX}. More precisely, for all pairs $(\ell,k)$ with $k/\ell$ equal to a constant greater than $1$, they improved the bound~\eqref{eq:lower:random} by an exponential factor. 

\begin{theorem}[Ma, Shen and Xie, 2025+]\label{thm:MSX}
For each $\lambda > 1$, there exists $\eps = \eps(\lambda) > 0$ such that the following holds. If $\ell,k \in \N$ are sufficiently large and $k = \lambda \ell$, then 
\begin{equation}\label{eq:MSX}
R(\ell,k) \ge \pigl( p + \eps \pigr)^{-\ell/2} \qquad \text{where} \qquad \frac{k}{\ell} = \frac{\log p}{\log(1-p)}.
\end{equation}
\end{theorem}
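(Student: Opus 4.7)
The plan is to replace the Erd\H{o}s--R\'enyi random colouring underlying~\eqref{eq:lower:random} by a \emph{random geometric colouring}, built so that short-range correlations between edges penalise large monochromatic cliques relative to what one sees in $G(n,p)$. The simplest candidate is to sample, for each vertex $v$, an independent point $X_v$ uniformly on the sphere $S^{d-1}$ (equivalently, a normalised Gaussian in $\R^d$), and to colour the edge $uv$ red if $\langle X_u, X_v \rangle \ge \tau$ and blue otherwise. Here $d = d(\ell,k)$ and $\tau = \tau(\ell,k)$ are parameters chosen so that a random edge is red with probability exactly $p$, where $p$ is the value in~\eqref{eq:MSX}. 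Taking $d = \Theta(\ell)$ is the natural scale: small enough for the geometry to genuinely constrain the space of monochromatic cliques, yet large enough that the one- and two-point marginals essentially mimic $G(n,p)$.

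First, I would fix $d$ and choose $\tau$ so that the marginal red-edge probability equals $p$. Next, I would estimate the probability that a fixed $\ell$-set forms a red clique. Geometrically this is the probability that $\ell$ random points on $S^{d-1}$ all lie within pairwise angular distance $\arccos(\tau)$, a highly constrained event: the points must cluster around a common direction. Conditioning on this ``centre direction'' and integrating should give a bound of the form $p^{\binom{\ell}{2}} \cdot e^{-c(\lambda)\,\ell}$, exponentially smaller (in $\ell$) than the independent-edge estimate $p^{\binom{\ell}{2}}$. A symmetric argument handles blue $K_k$'s: the requirement that all $\binom{k}{2}$ inner products lie below $\tau$ forces the $k$ points to be ``spread out,'' and the analogous measure bound yields $(1-p)^{\binom{k}{2}} \cdot e^{-c'(\lambda)\,k}$. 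Combining these two estimates via a first-moment calculation, or, to squeeze out the final constants, a symmetric Lov\'asz Local Lemma in the spirit of~\cite{S77}, should produce a colouring on at least $(p+\eps)^{-\ell/2}$ vertices with neither a red $K_\ell$ nor a blue $K_k$, which is the desired bound.

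The hard part will be the precise estimation of the clique probabilities in the two geometric events above. Measures of intersections of spherical caps and orthant-type probabilities for correlated Gaussians are notoriously delicate: one must tune the dimension $d$ carefully, since letting $d \to \infty$ decouples the edges and erases any gain over $G(n,p)$, while taking $d$ too small makes the colouring rigid and costs a polynomial factor in the number of vertices. The hypothesis $\lambda > 1$ is essential: it creates an asymmetry between the red and blue constraints that the geometric correlations can exploit, whereas in the diagonal case $\lambda = 1$ the two sides balance exactly and no single-scale geometric model has so far been shown to beat $G(n,1/2)$. A secondary technical hurdle is controlling the dependencies between overlapping potential cliques when upgrading the first-moment bound into an actual colouring; this is where the local lemma or a concentration argument (e.g.\ Talagrand's inequality) has to enter, exactly as in the classical derivation of~\eqref{eq:lower:LLL}.
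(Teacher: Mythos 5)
You correctly identified the right class of construction — a random geometric colouring on $S^{d-1}$ (or with Gaussian points), with red edges defined by a threshold on the inner product — which is indeed what Ma, Shen and Xie use (and what the simplifications of Hunter--Milojević--Sudakov and Sahasrabudhe use). However, your analysis contains a sign error that propagates and makes the heuristic run in exactly the wrong direction.

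The paper colours $uv$ red when $\langle u, v\rangle < \alpha$ with $\alpha = -c\sqrt{d}$, i.e.\ the \emph{sparse} colour (red, with probability $p < 1/2$) corresponds to unusually \emph{negative} inner products. Then a red $K_\ell$ requires $\ell$ nearly pairwise-antipodal directions, a genuinely anticorrelated event: $\Pr(\text{red }K_\ell) < p^{\binom{\ell}{2}}$. You colour $uv$ red when $\langle X_u, X_v\rangle \ge \tau$ with $\tau > 0$, so your red $K_\ell$'s are \emph{clusters}. Clusters are \emph{positively} correlated (once several points sit inside a common small cap, the remaining pairwise constraints are more, not less, likely to hold), so $\Pr(\text{red }K_\ell) > p^{\binom{\ell}{2}}$ — the opposite of what you assert. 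More fundamentally, the claim that both the red $K_\ell$ and the blue $K_k$ probabilities are simultaneously exponentially \emph{smaller} than their i.i.d.\ counterparts cannot hold: already at the triangle level one has, by inclusion--exclusion and the fact that edges sharing a vertex are independent in this model,
\begin{equation*}
\Pr\bigl(\text{blue }K_3\bigr) - (1-p)^3 \;=\; p^3 - \Pr\bigl(\text{red }K_3\bigr),
\end{equation*}
so any gain on one colour is exactly a loss on the other. The paper is explicit about this: red is suppressed, blue is \emph{boosted}, and the whole point of the delicate calculation is to show that the gain on the sparse side outweighs the loss on the dense side after re-optimising $p$. Your proposal, which claims a win on both sides, has no such trade-off to manage and so cannot be a correct account of why the construction works.

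A second, smaller, discrepancy is the dimension: you take $d = \Theta(\ell)$, whereas the paper uses $d = Ck^2 = \Theta(\ell^2)$ with threshold $\alpha = -c\sqrt d$. The quadratic scale is what keeps the geometric constraint soft (a rigid constraint forbidding red $K_\ell$'s outright, which occurs roughly when $d \lesssim \ell^2$, would make the colouring far from $G(n,p)$ and ruin the blue side). Your observation that $d \to \infty$ decouples the edges, and that the diagonal case $\lambda = 1$ is not handled by this method, is correct, but the rest of the heuristic needs to be redone with the sign of the threshold flipped and the correlation structure corrected.
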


Like many of the constructions that we have seen in this survey, the colouring that Ma, Shen and Xie used to prove Theorem~\ref{thm:MSX} is surprisingly simple to define -- in fact, it is quite similar to Erd\H{o}s' first lower bound on $R(3,k)$ (see Section~\ref{sec:Erdos:geometry}). The difficult part is to show (or even to guess) that it works!

To define the colouring, fix $d \in \N$, and for each set $A \subset \{-1,1\}^d$ and $\alpha \in [-d,d]$, consider a red-blue colouring of the complete graph with vertex set $A$, in which the edges 
$$\big\{ uv : \< u,v \> < \alpha \big\}$$
are coloured red, and the remaining edges are coloured blue. We will consider this colouring with $d = Ck^2$ for some large constant $C > 0$, with $\alpha = -c\sqrt{d}$ for some constant $c > 0$, and with the set $A$ chosen uniformly at random from the subsets of $\{-1,1\}^d$ of size $n$. 

Let $p$ be the probability that a given edge is red, and note that $p < 1/2$ is a constant depending on $c$. What is the expected number of monochromatic cliques in this colouring? It is not difficult to see (or at least to guess) that the events $\{ uv \text{ is red} \}$ are negatively correlated, and that therefore the probability that a fixed set of $\ell$ vertices forms a red clique should be less than $p^{\ell \choose 2}$. On the other hand, the events $\{ uv \text{ is blue} \}$ are positively correlated, and hence each set of $k$ vertices forms a blue clique with probability greater than $\pigl( 1 - p \pigr)^{k \choose 2}$. The optimal value of $p$ will therefore be slightly larger than the one used to prove~\eqref{eq:lower:random}. 

How do the sizes of these two effects compare? This is a much trickier question, but perhaps we can get some intuition by thinking about the case in which $p$ is small (so $c$ is large). The force of the negative correlation is then large, since every pair must have an unusually large negative inner product. On the other hand, the positive correlation will be relatively small, since the average inner product of a pair is only a little larger than zero. We might therefore hope that the decrease in the size of the largest red clique `outweighs' the increase in the size of the largest blue clique, compared with the random graph $G(n,p)$. This is exactly what Ma, Shen and Xie show, not only when $c$ is large, but for every $c > 0$.  

In order to perform the intricate calculations in the proof, Ma, Shen and Xie found it more convenient to consider a continuous version of the construction described above, in which the elements of the set $A$ are chosen uniformly and independently at random from the unit sphere\footnote{The proof of Theorem~\ref{thm:MSX} has recently been simplified by Hunter, Milojević and Sudakov~\cite{HMS} and Sahasrabudhe~\cite{S25} by instead choosing points in $\R^d$ according to a Gaussian distribution.} in~$\R^d$. 
Such \emph{random geometric graphs} have a long history in extremal and probabilistic combinatorics, beginning 
with the famous construction of Bollobás and Erd\H{o}s~\cite{BE76} that gives a sharp lower bound on the Ramsey--Turán number of $K_4$, and they are also important objects in probability theory, see for example~\cite{BDER,DGLU,Pen}. However, before the proof of Theorem~\ref{thm:MSX} their potential for proving lower bounds on Ramsey numbers had not been appreciated, and it does not seem unreasonable to hope that they may have many further applications in Ramsey theory.



\section{Diagonal Ramsey numbers}\label{diag:sec}

The method outlined in Section~\ref{closer:sec} can be extended, with a number of additional ideas, to prove Theorem~\ref{thm:diagonal}, which gives an exponential improvement for the diagonal Ramsey numbers $R(k)$. However, the proof given by this approach is for several reasons rather unsatisfying: it requires a long and complicated calculation to check that it really improves the Erd\H{o}s--Szekeres bound, and doesn't provide a nice, simple story for why it does better. The approach moreover gives a worse bound than the Erd\H{o}s--Szekeres algorithm for the multicolour diagonal Ramsey numbers $R_r(k)$, the smallest $n \in \N$ such that every $r$-colouring of the edges of $K_n$ contains a monochromatic copy of $K_k$.

In this section we will outline a second proof of Theorem~\ref{thm:diagonal}, which was found by Campos, Griffiths, Morris and Sahasrabudhe (the authors of the original proof~\cite{CGMS}) together with Balister, Bollobás, Hurley and Tiba~\cite{BBCGHMST}, that \emph{does} extend to the multicolour setting. This second proof moreover has various other advantages over the original: it is much shorter, it provides a clear story for why it improves the Erd\H{o}s--Szekeres bound, and it is based on a natural geometric lemma that has a surprisingly simple and elegant proof.

\begin{theorem}[Balister, Bollobás, Campos, Griffiths, Hurley, Morris, Sahasrabudhe and Tiba, 2024+]
\label{thm:multicolour}
For each $r \ge 2$, there exists $\delta = \delta(r) > 0$ such that 
$$R_r(k) \le e^{-\delta k} r^{rk}$$ 
for all sufficiently large $k \in \N$. 
\end{theorem}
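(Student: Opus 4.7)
The plan is to extend the book algorithm underlying the two-colour proof of Theorem~\ref{thm:diagonal}, as streamlined in~\cite{BBCGHMST}, from $r=2$ to general $r \ge 2$. The algorithm maintains $r$ monochromatic cliques $A_1, \ldots, A_r$ (one in each colour), a ``central'' set $X$ contained in the common $i$-neighbourhood of $A_i$ for every colour $i$, and $r$ auxiliary ``book'' sets $Y_1, \ldots, Y_r$, where $Y_i$ is contained only in the common $i$-neighbourhood of $A_i$. At each step we pick a vertex $v \in X$, examine its colour-neighbourhood partition of $X \setminus \{v\}$, add $v$ to one of the $A_i$'s, and update $X$ and the $Y_j$'s.

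Under the naive rule ``add $v$ to the clique $A_{i^*}$ maximising $|N_{i^*}(v) \cap X|$ and replace $X$ by that neighbourhood,'' the set $X$ shrinks by a factor of at least $1/r$ per step; since at most $r(k-1)$ steps occur, this recovers the multicolour Erd\H{o}s--Szekeres bound $R_r(k) \le r^{r(k-1)}$. To save an exponential factor, we play two moves off against each other: either (i) some $|N_{i^*}(v) \cap X|$ exceeds $|X|/r$ by a definite slack, producing a \emph{cheap step} with shrinkage factor strictly better than $1/r$, or (ii) the colour-degree vector of $v$ is close to the uniform vector $(1/r, \ldots, 1/r)$, in which case an \emph{expensive step} that adds $v$ to some $A_i$ while replacing $X$ by an appropriately chosen intersection involving $Y_i$ exploits excess density stored in the books. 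Tracking a multicolour analogue of the excess $f_q(X,Y)$ from Lemma~\ref{lem:GNNW}, with target density $q = 1/r$, gives the invariant that amortises cheap steps against investments in the books.

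The new ingredient is an $r$-colour geometric lemma: a short convexity statement on the $(r-1)$-simplex showing that, if the empirical distribution of colour-degree vectors $\{ (|N_i(v) \cap X|/|X|)_{i=1}^r : v \in X \}$ is concentrated too near the uniform vertex, then the colour-$i$ density between $X$ and at least one $Y_i$ must exceed $1/r$ by a definite amount. For $r = 2$ this degenerates to the elementary inequality $\min\{p, 1-p\} \le 1/2$ together with a quadratic deficit in $|p - 1/2|$; for general $r$ the argument is still elementary, relying only on convexity of a suitable entropy-like potential on the simplex, but must be stated quantitatively enough to drive the induction.

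The main obstacle will be to set the parameters correctly and close the induction. Concretely, one must choose the slack threshold $\eta = \eta(r)$, the initial book sizes, and the exponent in the induction hypothesis so that the multicolour $f_q$-type invariant is preserved under both cheap and expensive steps with a uniform per-step saving of at least $e^{\delta(r)}$ beyond the factor $1/r$ dictated by Erd\H{o}s--Szekeres. This reduces in the end to a convex optimisation over the $(r-1)$-simplex; the essential point is that the optimum is strictly positive for every fixed $r \ge 2$, which gives $\delta(r) > 0$ and hence the bound $R_r(k) \le e^{-\delta k} r^{rk}$ for all sufficiently large $k$. We do not attempt to optimise the dependence of $\delta(r)$ on $r$, which is presumably a much harder problem.
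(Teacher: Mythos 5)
Your framework (the Multicolour Book Algorithm with a central set $X$, cliques $A_1,\ldots,A_r$, and book sets $Y_1,\ldots,Y_r$) matches the paper's setup, but the geometric lemma you propose as the ``new ingredient'' is not correct as stated, and it is not the lemma the paper uses. You claim that if the empirical distribution of colour-degree vectors $\bigl( |N_i(v)\cap X|/|X| \bigr)_{i=1}^r$, $v\in X$, is concentrated near the uniform vertex of the simplex, then the colour-$i$ density between $X$ and some $Y_i$ must exceed $1/r$ by a definite amount. This implication simply does not hold: one can have every vertex of $X$ with colour-degree exactly $|X|/r$ in every colour inside $X$, while every density between $X$ and every $Y_i$ is exactly $1/r$. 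Information about single-vertex colour-degrees inside $X$ gives no control over the densities to the book sets. The actual dichotomy in \cite{BBCGHMST} (Lemma~\ref{lem:lambda}) is about \emph{pairwise correlations}: encode the colour-$i$ neighbourhood in $Y_i$ of each $x\in X$ as a centred vector $f_i(x)\in\R^{|Y_i|}$, take i.i.d.\ uniform $U,U'\in X$, and show that either $\Pr\bigl( \langle f_i(U),f_i(U')\rangle \ge -1 \text{ for all } i\in[r] \bigr)\ge\delta$ (enabling a colour step), or some colour $i$ exhibits clustering, $\Pr\bigl( \langle f_i(U),f_i(U')\rangle \ge \lambda \bigr)\ge e^{-O(\sqrt{\lambda})}$ for a large $\lambda$, which yields a density boost. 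The proof applies the function $g(x_1,\ldots,x_r)=\sum_j x_j\prod_{i\ne j}\bigl(2+\cosh\sqrt{x_i}\bigr)$, all of whose Taylor coefficients are non-negative, together with the non-negativity of all moments of the inner products. Nothing in your proposal supplies this pairwise statement, and no convexity argument about the distribution of single-vertex degree vectors on the simplex will produce it.

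The amortisation you describe is also suspect: you propose to extend the GNNW invariant $f_q(X,Y)$ to $q=1/r$ and close the induction by a convex optimisation over the simplex with ``strictly positive optimum for every fixed $r$.'' But the paper explicitly notes at the start of Section~\ref{diag:sec} that the CGMS/GNNW approach \emph{gives a worse bound than Erd\H{o}s--Szekeres} for the multicolour diagonal problem; this is precisely the obstruction that forced \cite{BBCGHMST} to find a new argument. Your claim that the optimum is strictly positive is exactly the content of the theorem, and you give no reason for it to be true; the paper's reason is the $\cosh\sqrt{x}$ identity and the resulting boost-step dichotomy, which is a genuinely new idea absent from your proposal.
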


The setting of the proof of Theorem~\ref{thm:multicolour} is illustrated in Figure~\ref{fig:multicolour:alg}; as before, $X$ is our `reservoir' set, and for each $i \in [r]$ we build a clique $A_i$ in colour $i$. However, we now also build a `book' $(A_i,Y_i)$ in each colour. Here we say that $(A,Y)$ is a red \emph{$(t,m)$-book} if $|A| = t$ and $|Y|= m$, and every edge with one endpoint in $A$ and the other in $A \cup Y$ is red. 

\begin{figure}[t]
\centering
\includegraphics[width=0.45\textwidth]{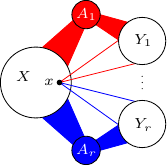}
\caption{The setting of the Multicolour Book Algorithm.}
\label{fig:multicolour:alg}
\end{figure}


For simplicity, let us focus for a moment on the case $r = 2$; the approach in the general case is essentially the same. Our plan is to find a monochromatic copy of $K_k$ (in an arbitrary red-blue colouring of $E(K_n)$) by first finding a monochromatic $(t,m)$-book, where
$$t \ge \delta^4 k \qquad \text{and} \qquad m \ge e^{-\delta t^2 / k} \, 2^{-t} n \ge R(k-t,k)$$
for some (small) constant $\delta > 0$. Note that in the set of size $m$ we must have either a copy of $K_{k-t}$ in the same colour as the book, or a copy of $K_k$ in the other colour, and in either case we obtain a monochromatic copy of $K_k$, as required. Moreover, since we have
$$R(k-t,k) \le {2k - t \choose k - t} \le e^{-t^2 / 6k} \, 2^{2k-t}$$
by the Erd\H{o}s--Szekeres bound~\eqref{eq:ESz:bound}, this will suffice to prove Theorem~\ref{thm:multicolour} when $r = 2$. The following lemma provides us with such a book. 

\begin{lemma}\label{lem:book}
Let\/ $c$ be an\/ $r$-colouring of\/ $E(K_n)$, and let\/ $X,Y_1,\ldots,Y_r \subset V(K_n)$. For every $p > 0$ and $k,m \in \N$, the following holds for some $t \ge \delta^4 k$. If 
$$|N_i(x) \cap Y_i| \ge p|Y_i|$$ 
for every $x \in X$ and every colour $i \in [r]$, and moreover
$$|X| \ge \bigg( \frac{2}{p} \bigg)^{\delta k} \qquad \text{and} \qquad \min\big\{ |Y_1|, \ldots, |Y_r| \big\} \ge 2^{\delta t^2/k} p^{-t} m,$$ 
then $c$ contains a monochromatic $(t,m)$-book.
\end{lemma}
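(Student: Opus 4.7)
The plan is to establish Lemma~\ref{lem:book} by running a multicolour book-extension algorithm that enlarges all $r$ books in parallel. Throughout the algorithm I would maintain, for each colour $i \in [r]$, a clique $A_i \subseteq X$ in colour $i$, a set $B_i \subseteq Y_i$ such that $(A_i, B_i)$ is an $i$-coloured book (every edge from $A_i$ to $A_i \cup B_i$ has colour $i$), and a common reservoir $W \subseteq X$ satisfying the invariant $W \subseteq N_j(A_j)$ for every $j \in [r]$, so that any $w \in W$ is a legitimate extension of any current $A_j$. A single step selects a pair $(x,i) \in W \times [r]$ and performs
$$A_i \leftarrow A_i \cup \{x\}, \qquad B_i \leftarrow B_i \cap N_i(x), \qquad W \leftarrow (W \setminus \{x\}) \cap N_i(x),$$
which preserves all invariants. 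We halt as soon as some $|A_i|$ reaches $t := \delta^4 k$, and output $(A_i, B_i)$ as the required book.

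The selection rule is as follows. Call $(x,i)$ \emph{admissible} if $|N_i(x) \cap B_i| \geq p|B_i|$, \emph{good} if the density is in fact at least $1/2$, and \emph{bad} otherwise. At each step take a good choice whenever one exists, else take any admissible one. After $t_i^g$ good and $t_i^b$ bad additions in colour $i$ one has $|B_i| \geq 2^{-t_i^g} p^{t_i^b} |Y_i|$, and setting $t = t_i^g + t_i^b$ the hypothesis $|Y_i| \geq 2^{\delta t^2/k} p^{-t} m$ keeps $|B_i| \geq m$ as long as $t_i^g \cdot \log_2(1/(2p)) \leq \delta t^2/k$. A simple averaging gives some $(x,i) \in W \times [r]$ with $|N_i(x) \cap W| \geq (|W|-1)/r$, so $|W|$ shrinks by a factor of at most $r$ per step; the hypothesis $|X| \geq (2/p)^{\delta k}$ comfortably supports the $O(r\delta^4 k)$ steps the algorithm needs.

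The main obstacle is verifying that at every stage an admissible choice actually exists: the hypothesis guarantees admissibility only with respect to the \emph{original} $Y_i$, whereas we need it for the current (possibly much smaller) $B_i$. This is, I expect, where the ``natural geometric lemma'' alluded to in the paper must enter, and I would try to establish it by a dichotomy. Either a good choice exists in some colour---in which case we save a factor of $1/(2p)$ against the trivial shrinkage and make concrete progress on a single book---or no admissible choice exists anywhere, in which case summing the hypothesis $|N_i(w) \cap Y_i| \geq p|Y_i|$ over $w \in W$ and $i \in [r]$, and comparing with the corresponding sums over the current $B_i$'s, should force a structural contradiction via a convexity or averaging inequality exploiting the large size of $W$. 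Establishing this dichotomy cleanly, and tuning the parameters so that the number of required good steps is compatible with the target $2^{\delta t^2/k}$ shrinkage, is the heart of the argument and pins down the admissible value of~$\delta$.
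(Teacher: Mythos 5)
Your skeleton—building $r$ books in parallel from a common reservoir, halting when some $A_i$ reaches size $t = \delta^4 k$—matches the paper's Multicolour Book Algorithm, and you correctly identify the central obstacle: the hypothesis $|N_i(x) \cap Y_i| \geq p|Y_i|$ concerns the \emph{original} $Y_i$, so there is no a priori reason an admissible vertex exists relative to the current $B_i$. But your proposed resolution of this obstacle does not work, and this is where the genuine content of the proof lives.

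The dichotomy you propose—``either a good choice exists, or summing the hypothesis over $w \in W$ should force a contradiction''—fails because there is simply no contradiction to be had. After a few steps the density of colour-$i$ edges between $W$ and the current $B_i$ can be far below $p$ for every $i$, with no tension whatsoever against the hypothesis on the original $Y_i$; the hypothesis tells you nothing about the colour class of edges into a sub-subset of $Y_i$. The correct mechanism, and the heart of the paper's argument, is a third type of move that you are missing: the \emph{density-boost step}. When the density of colour $i$ between $X$ and $Y_i$ (this density must be explicitly tracked as an invariant) threatens to drop, the paper finds a single vertex $x$ such that passing to $X' = \{y \in X : \langle f_i(x), f_i(y)\rangle \geq \lambda\}$ and $Y_i' = N_i(x) \cap Y_i$ strictly \emph{raises} the density, at the controlled cost of shrinking $X$ by a factor $e^{-O(\sqrt{\lambda})}$; crucially, this move does \emph{not} extend $A_i$. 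The guarantee that one of ``colour step'' or ``boost step'' is always available is exactly the geometric Lemma~\ref{lem:lambda} (the statement about $\langle f_i(U), f_i(U')\rangle$ and the function $g$ built from $\cosh\sqrt{x}$), applied with $f_i(x)$ encoding the deviation of $x$'s colour-$i$ neighbourhood in $Y_i$ from the ambient density. That lemma is not an averaging/convexity trick in the sense you sketch; it is a positivity-of-moments argument exploiting the non-negative Taylor coefficients of $\cosh\sqrt{x}$, and I see no way to replace it with the sum-and-compare contradiction you describe. Without it your algorithm has no valid move precisely in the cases that matter, so the proof does not go through.

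A secondary issue: because you have no boost mechanism, your accounting (comparing $2^{t_i^g}p^{-t_i^b}$ against $2^{\delta t^2/k}p^{-t}$) in fact places \emph{no} constraint when $p \leq 1/2$, which should be a warning sign—if the bookkeeping were really that forgiving, one could prove the lemma with $\delta = 1$, which is absurd. The real constraint in the paper's proof is on the \emph{boost} steps: each boost shrinks $X$ by $e^{-O(\sqrt{\lambda})}$ while gaining $\lambda$ in a density potential, and the hypothesis $|X| \geq (2/p)^{\delta k}$ is what funds the total boost cost. Your reservoir analysis (losing a factor $r$ per step) is not the binding constraint and does not reflect where the budget is actually spent.
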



To deduce Theorem~\ref{thm:multicolour} from Lemma~\ref{lem:book}, we need to find sets $X$, and $Y_1,\ldots,Y_r$ satisfying the conditions of the lemma with $p \approx 1/r$. To do so, we simply run Erd\H{o}s--Szekeres steps (always choosing the vertex of maximum degree in one colour) until every vertex has roughly an equal number of neighbours of each colour, and then partition randomly. 

To prove Lemma~\ref{lem:book}, in each step we either find a vertex of $X$ that can be added to one of the sets $A_i$ without significantly decreasing the density of colour $i$ edges between $X$ and $Y_i$, or we find a `density boost': large subsets $X' \subset X$ and $Y' \subset Y_i$ such that the density of colour $i$ edges between $X'$ and $Y'$ is significantly higher than that between $X$ and $Y_i$. That we can do so is a consequence of the following key geometric lemma.

\begin{lemma}\label{lem:lambda}
Let\/ $U$ and\/ $U'$ be i.i.d.~random variables taking values in a finite set~$X$, and let $f_1,\ldots,f_r \colon X \to \R^n$ be arbitrary functions. Either 
\begin{equation}\label{eq:colour:step}
\Pr\digl( \pigl\langle f_i(U), f_i(U') \pigr\rangle \ge -1 \, \text{ for all } \, i \in [r] \digr) \hspace{-0.01cm} \ge \hspace{0.03cm} \delta
\end{equation}
or there exist a colour $i \in [r]$ and a sufficiently large $\lambda > 0$ such that
\begin{equation}\label{eq:boost:step}
\Pr\digl( \pigl\langle f_i(U),f_i(U') \pigr\rangle \ge \lambda \digr) \ge e^{-O(\sqrt{\lambda})}.
\end{equation}
\end{lemma}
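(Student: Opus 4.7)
The plan is to prove Lemma~\ref{lem:lambda} by contradiction: I assume that both alternatives fail and derive an inconsistency using the mean non-negativity of each $Z_i := \langle f_i(U), f_i(U') \rangle$. The crucial structural input is that because $U, U'$ are i.i.d.,
\begin{equation*}
\Ex[Z_i] \,=\, \bigl\langle \Ex f_i(U),\, \Ex f_i(U') \bigr\rangle \,=\, \bigl\| \Ex f_i(U) \bigr\|^2 \,\geq\, 0
\end{equation*}
for every colour $i \in [r]$, so every $Z_i$ has non-negative mean.

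Suppose that the first alternative fails. Then $\Pr\bigl( \bigcup_i \{ Z_i < -1 \} \bigr) > 1 - \delta$, and the union bound produces a colour $i^\star$ with $\Pr(Z_{i^\star} < -1) \geq (1 - \delta)/r$. Writing $W := Z_{i^\star}$ and combining $\Ex W \geq 0$ with the pointwise inequality $W_- \geq \1\{W < -1\}$ gives
\begin{equation*}
\Ex[W_+] \,\geq\, \Ex[W_-] \,\geq\, \Pr(W < -1) \,\geq\, (1 - \delta)/r.
\end{equation*}
Thus it suffices to show that a constant lower bound on $\Ex[W_+]$ forces a scale $\lambda$ at which $\Pr(W \geq \lambda) \geq e^{-O(\sqrt{\lambda})}$.

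I would establish this final implication by a further contradiction, using the layer-cake identity $\Ex[W_+] = \int_0^\infty \Pr(W \geq \lambda)\, d\lambda$. If the second alternative also fails then there exist absolute constants $\lambda_0 > 0$ and a large $C > 0$ such that $\Pr(W \geq \lambda) < e^{-C\sqrt{\lambda}}$ for every $\lambda \geq \lambda_0$. Bounding the integrand trivially on $[0, \lambda_0]$ and by this tail estimate above $\lambda_0$ yields
\begin{equation*}
\Ex[W_+] \,\leq\, \lambda_0 \,+\, \int_{\lambda_0}^\infty e^{-C\sqrt{\lambda}}\, d\lambda \,=\, \lambda_0 \,+\, \frac{2\bigl( 1 + C\sqrt{\lambda_0} \bigr)}{C^2}\, e^{-C\sqrt{\lambda_0}}.
\end{equation*}
Choosing $\delta$ and $\lambda_0$ small (as a function of $r$) and $C$ sufficiently large makes this strictly less than $(1 - \delta)/r$, a contradiction.

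The main obstacle is calibrating the constants so that the threshold $\lambda_0$ above which ``sufficiently large $\lambda$'' holds is still at a scale useful for the density-boost step in the Multicolour Book Algorithm; for example, the argument above forces $\lambda_0 < (1 - \delta)/r$, which is of order $1/r$. The $\sqrt{\lambda}$ rate is essentially sharp for two reasons: on one hand $\int_0^\infty e^{-C\sqrt{\lambda}}\, d\lambda = 2/C^2$ is only just integrable, so any meaningfully faster decay would defeat the lower bound on $\Ex[W_+]$ for free; on the other, $W = \langle f(U), f(U') \rangle$ is a quadratic chaos in the i.i.d. pair $(U, U')$, and Mercer-decomposing the kernel as $W = \sum_j \mu_j \phi_j(U) \phi_j(U')$ (with $\mu_j \geq 0$ and $\phi_j$ orthonormal in $L^2$) shows that a single summand $\mu_j \phi_j(U) \phi_j(U')$ has upper tail of order $e^{-\Theta(\sqrt{\lambda / \mu_j})}$, so the exponent $\sqrt{\lambda}$ cannot in general be improved to $\lambda$.
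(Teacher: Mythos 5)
Your proof is correct, and it takes a genuinely different --- and more elementary --- route than the paper's. You use only the first moment, $\Ex\langle f_i(U),f_i(U')\rangle = \|\Ex f_i(U)\|^2 \ge 0$, together with a union bound to isolate a single colour $i^\star$, and then the layer-cake identity $\Ex[W_+] = \int_0^\infty \Pr(W > \lambda)\,d\lambda$ to turn the failure of the density-boost alternative into an upper bound on $\Ex[W_+]$ that contradicts the lower bound $\Ex[W_+] \ge \Ex[W_-] \ge \Pr(W < -1) \ge (1-\delta)/r$. The paper's argument instead exploits the stronger fact that \emph{all} mixed moments of the variables $Z_i = \langle f_i(U),f_i(U')\rangle$ are non-negative (each such moment is a sum of squares, by independence of $U$ and $U'$), and encodes this via the multivariate function $g(x_1,\dots,x_r) = \sum_j x_j \prod_{i\ne j}\bigl(2+\cosh\sqrt{x_i}\bigr)$, which has non-negative Taylor coefficients, so that $\Ex\bigl[g(Z_1,\dots,Z_r)\bigr] \ge 0$; the two pointwise bounds on $g$ recorded in the text then deliver the dichotomy, treating all $r$ colours simultaneously. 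The trade-off is quantitative: your union bound costs a factor of $r$, so the threshold for ``sufficiently large $\lambda$'' that your argument produces is only of order $(1-\delta)/r$, and the constant in $e^{-O(\sqrt{\lambda})}$ must be taken of order $\sqrt{r}$ to close the contradiction; the function-$g$ approach avoids the single-colour extraction step and is what the authors of~\cite{BBCGHMST} use to control the $r$-dependence of $\delta$ in Theorem~\ref{thm:multicolour}. For the purposes of the informal statement given here, however, your argument is shorter and uses a strictly weaker probabilistic input.
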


Roughly speaking, this lemma says that if the $r$ functions exhibit a large amount of `negative correlation', then one of them must exhibit a significant amount of `clustering'. In our application, the function $f_i$ encodes the colour $i$ neighbourhoods in the set $Y_i$ of the vertices of $X$, and $U$ and $U'$ are uniformly-chosen elements of $X$. If~\eqref{eq:colour:step} holds, then we choose a vertex $x \in X$ and a colour $i \in [r]$ such that the set 
$$X' = \big\{ \hspace{0.02cm} y \in X : \pigl\langle f_i(x), f_i(y) \pigr\rangle \ge -1 \, \text{ and } \, c(xy) = i \hspace{0.02cm}\big\},$$
has size at least $\delta |X| / r$, and update the sets as follows:
$$X \to X', \qquad Y_i \to N_i(x) \cap Y_i \qquad \text{and} \qquad A_i \to A_i \cup \{x\}.$$
On the other hand, if~\eqref{eq:boost:step} holds, then we instead choose a vertex $x \in X$ such that the set 
$$X' = \big\{ \hspace{0.02cm} y \in X : \pigl\langle f_i(x), f_i(y) \pigr\rangle \ge \lambda \hspace{0.02cm}\big\},$$
has size at least $e^{-O(\sqrt{\lambda})} |X|$, and update the sets as follows:
$$X \to X' \qquad \text{and} \qquad Y_i \to N_i(x) \cap Y_i.$$
The bounds on the inner product guarantee that in the first case the density of colour $i$ edges between $X$ and $Y_i$ does not decrease too much, and in the second case that it increases substantially. Note that in the second case the set $X$ may shrink by a large factor, but since the factor $e^{-O(\sqrt{\lambda})}$ is a sub-exponential function of $\lambda$, this does not cost us too much.  

Finally, let us briefly discuss the (surprisingly simple) proof of Lemma~\ref{lem:lambda}. The key idea is to define the following function: 
\begin{equation}\label{def:f}
g(x_1,\dots,x_r) = \sum_{j = 1}^r x_j \prod_{i \ne j} \big( 2 + \cosh\sqrt{x_i} \big),
\end{equation}
where we define $\cosh \sqrt{x}$ via its Taylor expansion
$$\cosh\sqrt{x} = \sum_{n = 0}^\infty \frac{x^n}{(2n)!}.$$ 
In particular, all of the coefficients of the Taylor expansion of $g$ are non-negative, which implies that 
$$\Ex\rigl[ g\sigl( \pigl\< f_1(U,U') \pigr\>, \ldots, \pigl\< f_r(U,U') \pigr\> \sigr) \rigr] \ge 0,$$
since the moments of the inner products $\pigl\langle f_i(U),f_i(U') \pigr\rangle$ are all non-negative. The lemma now follows from a straightforward calculation, using the following inequalities: 
$$
g(x_1,\dots,x_r) \le \left\{\begin{array}{cl}
3^r r \exp\bigg( \displaystyle\sum_{i = 1}^r \sqrt{ x_i + 3r } \bigg) \quad & \text{if } \,\, x_i \ge - 3r \,\text{ for all }\, i \in [r];\\[+3ex]
-1 & \text{otherwise.} 
\end{array} \right.
$$

The proof in~\cite{BBCGHMST} implies that Theorem~\ref{thm:multicolour} holds with $\delta$ a polynomial function of $r$. A natural next aim would be to prove it for an absolute constant $\delta$. 

\begin{conjecture}\label{conj:diagonal}
There exists a constant $\delta > 0$ such that 
$$R_r(k) \le e^{-\delta k} r^{rk}$$ 
for all $r \ge 2$ and all sufficiently large $k \in \N$. 
\end{conjecture}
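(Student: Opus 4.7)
My plan is to run the Multicolour Book Algorithm from the proof of Theorem~\ref{thm:multicolour}, but with a sharpened version of the geometric lemma (Lemma~\ref{lem:lambda}) whose constants are independent of $r$. The target remains to produce a monochromatic $(t,m)$-book with $t \ge \delta k$ and $m \ge e^{-\delta t^2 / k}\, r^{-t} n$ for an absolute $\delta > 0$; combining such a book with the Erd\H{o}s--Szekeres upper bound on the residual multicolour Ramsey number then yields an $e^{-\Omega(k)}$ saving over $r^{rk}$ uniformly in $r$.

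The heart of the proposal is the refinement of Lemma~\ref{lem:lambda}. The polynomial dependence on $r$ in the current argument is traceable to the test function $g$ from~\eqref{def:f}: the product $\prod_i(2+\cosh\sqrt{x_i})$ already contributes $3^r$ at the origin, and the case split requires $x_i \ge -3r$. I would try replacing $g$ by a normalized variant such as
$$\tilde{g}(x_1,\ldots,x_r) \;=\; \sum_{j=1}^{r} x_j \prod_{i\ne j}\left(1 + \tfrac{a}{r}\bigl(\cosh(b\sqrt{x_i}) - 1\bigr)\right),$$
with absolute constants $a,b > 0$. The factor $1/r$ balances the product so that its value at the origin is~$1$, while the Taylor coefficients remain non-negative, preserving the key inequality $\Ex\bigl[\tilde{g}(\langle f_1(U),f_1(U')\rangle,\ldots,\langle f_r(U),f_r(U')\rangle)\bigr] \ge 0$. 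The desired conclusion would be a dichotomy with absolute constants: either $\Pr\bigl(\langle f_i(U),f_i(U')\rangle \ge -M \text{ for all } i \in [r]\bigr) \ge \delta_0$, or there exists $i \in [r]$ with $\Pr\bigl(\langle f_i(U),f_i(U')\rangle \ge \lambda\bigr) \ge e^{-C\sqrt{\lambda}}$ for some absolute constants $M,\delta_0, C$.

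Assuming such a refined lemma, the second step is to rerun the book algorithm with the new parameters. The pigeonhole loss of factor $1/r$ in selecting a colour during a colour step can be absorbed by the $r^{-t}$ factor already built into the density budget for $m$, since each step costs only a $\log r$ contribution to the shortfall in $\log|X|$ and the initial reservoir has $\log|X| \gtrsim \delta k \log r$. Tracking a potential function that combines $\log |X|$ with the logarithms of the colour-$i$ edge densities between $X$ and $Y_i$, one then shows that the algorithm performs $\delta k$ colour steps before $X$ becomes too small, at which point the accumulated cliques $A_i$ together with the surviving common blue neighbourhoods deliver the required book.

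The main obstacle is constructing $\tilde g$ with the desired properties, and this is where I expect the crux of the difficulty to lie. The dichotomy in Lemma~\ref{lem:lambda} rests on a delicate trade-off between an ``all negative'' regime where $g \le -1$ and a ``positive'' regime where $g$ grows like $e^{r\sqrt{\lambda}}$, and the $r$-dependence appears to be baked into this trade-off: dampening the growth on one side tends to raise the threshold on the other. It may be necessary to abandon the product structure entirely, for instance by deducing a density boost directly from the positivity of $\Ex\exp\bigl(c\sum_i \bigl(\langle f_i(U),f_i(U')\rangle\bigr)_+^{1/2}\bigr)$ via a Chebyshev-type estimate, so that no $3^r$ overhead is incurred at any stage. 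Whether any such symmetric formulation can actually yield the dichotomy with absolute constants is, in my view, the principal reason Conjecture~\ref{conj:diagonal} remains open.
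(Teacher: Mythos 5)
The statement you are addressing is Conjecture~\ref{conj:diagonal}, which the paper explicitly leaves open: the proof of Theorem~\ref{thm:multicolour} in~\cite{BBCGHMST} only yields $\delta = \delta(r)$ a polynomial function of $r$, and the survey states as ``a natural next aim'' the task of making $\delta$ an absolute constant. So there is no proof in the paper for you to be compared against, and your write-up is --- as you yourself acknowledge in your final paragraph --- a research plan rather than a proof.

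You correctly locate the source of the $r$-dependence: the $3^{r}$ normalization and the $-3r$ threshold in the bounds on the test function $g$ from~\eqref{def:f}. But your proposed replacement $\tilde g(x_1,\dots,x_r) = \sum_j x_j \prod_{i \ne j}\bigl(1 + \tfrac{a}{r}(\cosh(b\sqrt{x_i}) - 1)\bigr)$ is never shown to deliver the dichotomy with absolute constants; you only describe the \emph{shape} of the statement you would like, not a verification of it. Concretely, the case analysis that makes Lemma~\ref{lem:lambda} work needs two things simultaneously: a usable negative bound on $\tilde g$ on the event that some $x_j$ falls below the threshold $-M$ (in the presence of possibly large positive contributions from the other coordinates), and a not-too-wild upper bound on $\tilde g$ on the complementary event, so that positivity of $\Ex[\tilde g]$ forces one of the two alternatives. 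You observe that these two requirements pull in opposite directions as the $1/r$ damping is turned up, but you do not resolve the tension, and the Chebyshev-type alternative you float in the last paragraph is not developed at all. Until one of these routes is carried through --- and the accounting in the Multicolour Book Algorithm re-verified with whatever constants emerge --- the conjecture remains open, as the paper says. Your contribution is a clear diagnosis of where the difficulty lies, not a proof.
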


The best-known lower bounds on $R_r(k)$ are of the form $c^{rk}$ for some constant $c > 1$. The first such bound was proved by Abbott~\cite{A72} in 1972, and the value of $c$ was improved recently, first by Conlon and Ferber~\cite{CF}, and subsequently by Wigderson~\cite{Wig} and Sawin~\cite{Saw}. 

At the opposite end of the spectrum, the problem is also wide open in the case $k = 3$. The best known upper bound is of the form $R_r(3) = O(r!)$, which follows from the Erd\H{o}s--Szekeres algorithm, and was originally proved by Schur~\cite{Schur} in 1911. Any improvement of this bound would be extremely welcome. 

\begin{problem}\label{prob:upper:Rr3}
Show that
$$R_r(3) = o(r!)$$ 
as $r \to \infty$.
\end{problem}

A much more daunting task would be to solve the following famous problem of Erd\H{o}s~\cite{E81}. 

\begin{problem}[Erd\H{o}s, 1970s\footnote{Ne\v{s}et\v{r}il and Rosenfeld~\cite{NR} mention that in 1974 this was already ``one of the `prized' Erd\H{o}s problems". However, the earliest paper that we were able to find in which the problem is stated in this form is~\cite{E81}.}]\label{prob:Erdos:Rr3}
Does there exists a constant $C > 0$ such that 
$$R_r(3) \le 2^{Cr}$$ 
for all $r \in \N$?
\end{problem}
 
 
The best known lower bounds on $R_r(3)$ are obtained via the inequality $R_r(3) \ge S(r)$, where $S(r)$ denotes the $r$th Schur number: the smallest $n \in \N$ such every $r$-colouring of the set $[n]$ contains a monochromatic solution of the equation $x + y = z$. We refer the reader to~\cite{NR} for a well-written and entertaining history of bounds on $S(r)$ and $R_r(3)$. 
 


\section{Induced Ramsey numbers}\label{induced:sec}

In this final section we will provide a rough sketch of the amazing recent breakthrough of Aragão, Campos, Dahia, Filipe and Marciano~\cite{ACDFM} on induced Ramsey numbers. Here (like in Section~\ref{diag:sec}) we will work in the more general setting of $r$-colourings, so let us write  
$$G \xxrightarrow[r]{\mathrm{ind}} H$$
if every $r$-colouring of $E(G)$ contains a monochromatic induced copy of~$H$, and define 
$$R_{\hspace{0.02cm} r}^{\mathrm{ind}}(H) = \min\big\{ v(G) : G \xxrightarrow[r]{\mathrm{ind}} H \big\}.$$
These numbers were shown to be finite for every $r$ and every graph $H$ in~\cite{D75,EHP,R73}. 
Not long afterwards, Erd\H{o}s~\cite{E75,E84} made the following influential conjecture.

\begin{conjecture}[Erd\H{o}s, 1975]\label{conj:induced}
There exists a constant $C > 0$ such that
$$R_{\hspace{0.02cm} 2}^{\mathrm{ind}}(H) \le 2^{Ck}$$
 for every graph $H$ with $k$ vertices.
 \end{conjecture}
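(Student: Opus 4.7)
My plan is to follow the template of the Aragão--Campos--Dahia--Filipe--Marciano argument and show that a single random graph witnesses the bound for every $H$ simultaneously: specifically, to prove by induction on $k$ that for $n \geq 2^{Ck}$, with a sufficiently large absolute constant $C$, the random graph $G \sim G(n,1/2)$ satisfies $G \xrightarrow{\mathrm{ind}} H$ with probability $1 - o(1)$ for every graph $H$ on $k$ vertices at once. The base cases are trivial; the content is in the inductive step.

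For the inductive step, fix $H$ with $k$ vertices and a distinguished vertex $v$, and let $H' = H - v$. Choose a subset $U \subset V(G)$ of size $\delta n$ for some small constant $\delta = \delta(C) > 0$ and first reveal the edges of $G[U]$. What I want is a strengthening of the inductive hypothesis saying that with high probability, for \emph{every} $2$-colouring $c$ of $E(G[U])$, there is a colour $i \in \{R,B\}$ and a family $\mathcal{F}$ of monochromatic induced copies of $H'$ in colour $i$ that is large and `well-distributed', in the sense that for many subsets $T \subset U$ of the appropriate size, some member of $\mathcal{F}$ realises $T$ as the image of $N_H(v) \setminus \{v\}$. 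Having secured such a family $\mathcal{F}$, the remaining task is to bound the probability that the edges between $U$ and $V(G) \setminus U$ (still unrevealed) admit a $2$-colouring $c'$ extending $c$ under which \emph{no} vertex $w \in V(G) \setminus U$ extends any copy in $\mathcal{F}$ to an induced monochromatic $H$: this failure probability must be small enough to survive the union bounds over both $c$ and $c'$.

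The main obstacle is the quantitative budget. The union bound over colourings of $E(G[U])$ alone consumes $2^{\Theta(\delta^2 n^2)}$, and a further union bound over the $\Theta(\delta n^2)$ bipartite edges costs up to $2^{\Theta(\delta n^2)}$, so the probability that no $w$ extends must be driven below roughly $2^{-\Theta(\delta n^2)}$. A naive approach that estimates, for each $w$, the chance that $w$ fails to extend every copy in $\mathcal{F}$ and then multiplies across $w$ loses a factor of $|\mathcal{F}|$ in the wrong place; and, crucially, the well-distributedness of $\mathcal{F}$ is a \emph{global} condition on the collection of extension vertices rather than a local condition on individual ones. Classical hypergraph containers are well suited to detecting local structure (such as independent sets) but not to capturing a global distributional requirement of this kind.

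The plan for the heart of the proof is therefore to invoke the new variant of the container method of Campos and Samotij, which is precisely designed to reduce such global questions to the local ones that classical containers can handle. Using it, one should be able to cluster the `bad' bipartite colourings $c'$ into a small number of structured containers, inside each of which the well-distributedness of $\mathcal{F}$ can be leveraged, together with the random-like behaviour of $G$, to defeat the union bounds. Implementing this reduction --- and in particular formulating the correct `well-distributed' strengthening of the statement that is both strong enough to survive extension and weak enough to propagate through the induction --- is the step I expect to be by far the main obstacle, and is the step whose solution in~\cite{ACDFM} seems most likely to find further applications in probabilistic combinatorics.
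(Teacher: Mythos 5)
Your overall plan matches the one in the paper: induct on $k$, show that $G \sim G(n,1/2)$ arrows every $k$-vertex $H$ with high probability once $n \ge 2^{Ck}$, reveal $G[U]$ for a set $U$ of size $\delta n$, take a union bound over the colourings of $E(G[U])$, and deploy the Campos--Samotij container machinery to control the probability that the colouring of the edges between $U$ and $V(G)\setminus U$ fails to extend. Your diagnosis that well-distributedness is a global rather than local property --- and that this is precisely the regime the new container variant is built to handle --- is also the paper's framing.

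There is, however, a concrete structural gap in your strengthened inductive hypothesis. You ask for \emph{some} colour $i = i(c)$ together with a well-distributed family $\mathcal{F}$ of $i$-coloured induced copies of $H' = H - v$ inside $U$. But the adversary then simply colours every edge between $U$ and $V(G) \setminus U$ with the opposite colour, and no vertex $w \notin U$ can extend any member of $\mathcal{F}$ to an $i$-coloured copy of $H$. The paper flags exactly this issue, and the fix --- which you are missing --- is to generalise to the \emph{off-diagonal} setting: one proves by induction that for any target graphs $H_1, \dots, H_r$ and any $r$-colouring, there is a colour-$i$ induced copy of $H_i$ for some $i$. The off-diagonal hypothesis then yields, inside $U$, a well-distributed family of colour-$i$ copies for \emph{every} colour $i$ simultaneously, so that whichever colour dominates the bipartite edges can be exploited. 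Without this generalisation the single-colour statement does not close. One further point for the record: the paper's precise notion of `well-distributed' is the $(p,R)$-Janson condition --- a second-moment bound on a probability measure supported on $\mathcal{F}$ --- which is what plugs into the Campos--Samotij lemma, and is somewhat different in character from the covering condition you sketch; you correctly identify that pinning this formulation down, so that it is both strong enough to survive extension and weak enough to propagate, is the crux of the argument.
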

 
 
The first single-exponential bound on $R_{\hspace{0.02cm} 2}^{\mathrm{ind}}(H)$ was obtained by Kohayakawa, Pr\"omel and R\"odl~\cite{KPR}, who used a random graph built using projective planes to show that
\begin{equation}\label{eq:KPR}
R_{\hspace{0.02cm} 2}^{\mathrm{ind}}(H) \le k^{O(k \log k)}
\end{equation}
for every graph $H$ with $k$ vertices. An alternative approach for arbitrary pseudorandom graphs was introduced by Fox and Sudakov~\cite{FS08,FS09}, who gave a second proof of~\eqref{eq:KPR}, and also obtained the first reasonable bound in the case $r > 2$, showing that
\begin{equation}\label{eq:FS:BS}
R_{\hspace{0.02cm} r}^{\mathrm{ind}}(H) \le r^{O(r k^2)}
\end{equation}
for every graph $H$ with $k$ vertices and every $r \in \N$. This method was then developed further by Conlon, Fox and Sudakov~\cite{CFS12}, who improved the bound~\eqref{eq:KPR} to 
$$R_{\hspace{0.02cm} 2}^{\mathrm{ind}}(H) \le k^{O(k)}.$$
More recently, another proof of~\eqref{eq:FS:BS} was found by Balogh and Samotij~\cite{BS}, who used their `efficient' container lemma to show that $G(n,1/2) \xxrightarrow[r]{\mathrm{ind}} H$ with high probability. 
  
Conjecture~\ref{conj:induced} was finally proved by Aragão, Campos, Dahia, Filipe and Marciano~\cite{ACDFM}, who moreover resolved the problem for all $r \ge 2$.
  
\begin{theorem}[Aragão, Campos, Dahia, Filipe and Marciano, 2025+]\label{thm:induced:multi}
There exists an absolute constant $C > 0$ such that\vskip-0.38cm
\begin{equation}\label{eq:multicolour:induced}
R_{\hspace{0.02cm} r}^{\mathrm{ind}}(H) \le r^{C r k}
\end{equation}
for every $r \ge 2$ and every graph $H$ with $k$ vertices. 
\end{theorem}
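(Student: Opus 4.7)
The plan is to prove Theorem~\ref{thm:induced:multi} by induction on $k$, following the template described in Section~\ref{induced:subsec}, adapted to the multicolour setting. Setting $n = r^{Crk}$ for $C$ a suitable absolute constant, the aim is to show that a suitable random graph $G$ on $n$ vertices (for concreteness, $G \sim G(n,1/2)$) satisfies $G \xxrightarrow[r]{\mathrm{ind}} H$ with very high probability. For the induction to carry through, the statement must be substantially strengthened: the inductive hypothesis asserts that with very high probability, for every $r$-colouring of $E(G)$ there is not merely one monochromatic induced copy of $H$, but a large \emph{well-distributed} collection of them, where well-distributed is a global quantitative condition (roughly, the collection should cover the vertex set nearly uniformly, with no clustering of $H$-copies, of their vertex subsets, or of their edge sets).

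For the inductive step, fix an arbitrary vertex $v \in V(H)$, set $H' = H - v$, and choose a subset $U \subset V(G)$ of size $\delta n$ for a small constant $\delta = \delta(r) > 0$. First reveal the edges of $G$ inside $U$ and union-bound over all $r^{\binom{|U|}{2}}$ possible $r$-colourings of $E(G[U])$; for each such colouring, the inductive hypothesis applied to $G[U]$ produces, with failure probability that must beat $r^{-\binom{|U|}{2}}$, a well-distributed collection $\mathcal{F}$ of monochromatic induced copies of $H'$ inside $U$. Next, reveal the edges between $U$ and $W = V(G) \setminus U$, and consider an arbitrary $r$-colouring of these edges. The task is to upgrade $\mathcal{F}$ to a well-distributed collection of monochromatic induced copies of $H$ in $G$, by selecting, for many copies $F \in \mathcal{F}$, a vertex $w \in W$ whose adjacencies into $F$ reproduce the adjacency pattern of $v$ in $H$ and whose $|V(H')|$ incident edges into $F$ all receive the colour of $F$.

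The main obstacle is the second step: its failure probability must beat the $r^{\binom{|U|}{2}}$ cost of the union bound over colourings of $E(G[U])$, while the trivial information-theoretic lower bound on the failure probability is only slightly smaller, on the order of $r^{-\Theta(\delta n^2)}$. Hence a direct union bound over $r$-colourings of $E(U,W)$ — roughly $r^{\delta(1-\delta)n^2}$ of them — is far too wasteful, and standard container theorems are ill-suited because the obstruction we need to rule out (`the extension of $\mathcal{F}$ is not well-distributed') is a \emph{global} property of the colouring, not a local independent-set-type condition. This is where the recent container-type result of Campos and Samotij~\cite{CS} enters: the plan is to use it to reduce control of the global well-distributedness of the extended collection to a \emph{local} property of the colouring of $E(U,W)$ (at the level of individual edges or small tuples of vertices in $W$), which can then be handled by a more efficient union bound that is weighted according to the structure of the container output.

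I expect the hardest part to be calibrating the precise notion of well-distributed so that it is simultaneously (i) strong enough to power the induction — in particular, to guarantee enough extension flexibility against every adversarial $r$-colouring of $E(U,W)$; (ii) weak enough to be delivered by the inductive hypothesis on $G[U]$; and (iii) amenable to the Campos--Samotij reduction from global to local conditions. Once this notion is fixed, one must verify the probabilistic estimates with constants sharp enough that the induction closes with $n = r^{Crk}$: the savings produced by the container reduction, combined with the randomness of the edges between $U$ and $W$, must compensate for the $r^{\binom{|U|}{2}}$ factor from the union bound and leave a tiny residual failure probability. The constant $C$ is then absorbed into $\delta$, the container parameters, and the `well-distributedness' thresholds, with everything tracked as a polynomial in $r$ so that the bound $r^{Crk}$ holds uniformly in $r$.
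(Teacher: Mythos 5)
Your proposal matches the paper's own sketch essentially point for point: the reduction to showing $G(n,1/2) \xxrightarrow[r]{\mathrm{ind}} H$ with failure probability beating $r^{-\binom{|U|}{2}}$, the induction on $k$ with a strengthened ``well-distributed collection'' hypothesis, the two-step exposure (inside $U$, then $U$--to--$W$), the identification of the global-vs-local obstacle, and the role of the Campos--Samotij container lemma in converting it. The only pieces you leave unspecified — the precise quantitative notion of ``well-distributed,'' which the paper formalises as $(p,R)$-Janson hypergraphs, and the passage to the off-diagonal setting (one target graph $H_i$ per colour) needed to make the colour-avoidance argument work — are exactly the details the paper supplies, so your approach is the same as theirs.
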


This bound is close to best possible, since $R_{\hspace{0.02cm} r}^{\mathrm{ind}}(K_k) = R_r(k)$, and the bound~\eqref{eq:multicolour:induced} matches the best-known upper bound on $R_r(k)$ up to the value of the constant $C$ (cf.~Section~\ref{diag:sec}).

Aragão, Campos, Dahia, Filipe and Marciano actually proved the following stronger theorem, which moreover implies that for almost all graphs $G$ with $n \ge r^{Crk}$ vertices, every $r$-colouring of $E(G)$ contains an induced monochromatic copy of \emph{every} graph $H$ on $k$ vertices.

\begin{theorem}\label{thm:induced:multi:Gnp}
Let $H$ be a graph with $k$ vertices, let $r \ge 2$, and let $n \ge r^{Crk}$. Then
$$G(n,1/2) \xxrightarrow[r]{\mathrm{ind}} H$$
with probability at least $1 - \exp\pigl( - \delta n^2 \pigr)$, where $\delta = r^{-Crk}$.
\end{theorem}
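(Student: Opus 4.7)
The plan is to proceed by induction on $k = v(H)$, with a strengthened induction hypothesis: with probability at least $1 - \exp(-\delta n^2)$, the random graph $G = G(n,1/2)$ has the property that, for \emph{every} graph $H$ on $k$ vertices and every $r$-colouring of $E(G)$, there is a large and \emph{well-distributed} family $\mathcal{F}_H$ of monochromatic induced copies of $H$ in $G$. The well-distributedness should be strong enough that no small vertex subset is covered by too many members of $\mathcal{F}_H$ (for instance, each vertex of $G$ lies in at most $O\pigl( k|\mathcal{F}_H|/n \pigr)$ of them). The base case $k = 1$ is trivial; the output of the induction at level $k-1$ will feed the induction at level $k$.

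For the inductive step, let $H$ be a graph on $k$ vertices, fix a vertex $v \in V(H)$, and set $H' = H - v$. Choose a subset $U \subset V(G)$ with $|U| = \delta_0 n$ for a suitable $\delta_0 = r^{-C'k}$, and expose only the edges of $G$ inside $U$. Since $|U|$ is still much larger than $r^{Cr(k-1)}$, the induction hypothesis applied to $G[U]$ guarantees, with probability at least $1 - \exp\pigl( -\delta(k-1)|U|^2 \pigr)$, that every $r$-colouring $c$ of $E(G[U])$ produces a large well-distributed family $\mathcal{F}'_c$ of monochromatic induced copies of $H'$ inside $U$. After a union bound over the at most $r^{|U|^2/2}$ choices of $c$, the task reduces to showing, for each fixed $\mathcal{F}'_c$, that with probability at least $1 - \exp(-2\delta n^2) \cdot r^{-|U|^2}$ over the remaining randomness of $G$, no colouring of the edges between $U$ and $V(G) \setminus U$ prevents the copies in $\mathcal{F}'_c$ from being extended, via vertices $w \in V(G) \setminus U$, to a large well-distributed family of monochromatic induced copies of $H$.

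The extension step is the heart of the argument. Call $w \in V(G) \setminus U$ \emph{compatible} with $F \in \mathcal{F}'_c$ if the random edges of $G$ between $w$ and $F$ realise exactly the adjacency pattern of $v$ in $H$; this happens independently with probability $2^{-(k-1)}$, so in expectation each $F$ has many compatible candidates. Given compatibility, the adversary can still kill an extension by mis-colouring one of the $|N_H(v)|$ edges between $w$ and $F$ away from the target colour $i_F$. The goal, therefore, is to bound the number of cross-edge colourings that simultaneously kill all but a non-well-distributed subset of compatible pairs over all large subfamilies of $\mathcal{F}'_c$. For this we would invoke the new variant of the hypergraph container method of Campos and Samotij~\cite{CS}: the global condition that ``the surviving extensions form a well-distributed family'' is converted into a small collection of \emph{local} constraints (specifications on the colours seen by a bounded collection of selected vertices $w$ on their $U$-neighbourhoods), each of which either has exponentially small probability over the random edges of $G$, or else allows the extension to proceed and feeds the induction.

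The main obstacle is precisely this local-to-global reduction. A classical container argument would capture only local events---such as ``a specific vertex $w$ extends too few $F \in \mathcal{F}'_c$''---and is incapable of detecting the structural feature that the surviving extensions collectively fail to be well-distributed; but without a well-distributed output, the induction cannot be closed at the next level. Designing the hypergraph on which to apply the Campos--Samotij lemma so that its fingerprints carry exactly the information needed to preserve well-distributedness (over all choices of $v \in V(H)$, of isomorphism type of $H$, and of colour $i_F$) is the delicate point. Once this is in place, propagating the parameters $|U| = \delta_0 n$, the failure probability $\delta$, and the well-distributedness thresholds through the induction is essentially a bookkeeping exercise, and yields $\delta = r^{-Crk}$ for a sufficiently large absolute constant~$C$.
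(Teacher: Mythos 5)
Your high-level plan --- induct on $k$, expose a small set $U$, union-bound over the roughly $r^{|U|^2}$ colourings inside $U$, invoke the induction hypothesis to find a well-distributed family of copies of $H-v$ inside $U$, and use the Campos--Samotij container lemma to show that every cross-edge colouring extends this family --- is indeed the architecture of the paper's proof. There are, however, two concrete gaps in your formulation, each of which would stop the induction from closing.

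First, your induction hypothesis is stated in the diagonal form (``for every $H$ on $k$ vertices and every $r$-colouring there is a well-distributed family of monochromatic induced copies''), but this does not feed the extension step: after fixing the colouring inside $U$, the family of copies of $H'$ that the induction produces lives in some particular colour, and the adversary simply avoids that colour on the cross edges. The paper's first move is therefore to pass to the off-diagonal version, tracking an $r$-tuple $(H_1,\ldots,H_r)$ and seeking, for each $i\in[r]$, a well-distributed family of colour-$i$ copies of $H_i-v$, so that whichever colour class a vertex $w \in V(G)\setminus U$ sends $\ge |U|/4r$ of its $U$-edges to can be exploited. You mention a target colour $i_F$ later, but the induction hypothesis as you write it does not deliver it.

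Second, and more seriously, the notion of ``well-distributed'' you propose --- that each vertex (or small subset) of $U$ lies in $O(k|\mathcal{F}_H|/n)$ copies --- is a degree/co-degree cap, and it is the wrong quantity. The paper's definition is that the hypergraph $\mathcal{H}$ of copies be $(p,R)$-Janson: there is a probability measure $\mu$ on $E(\mathcal{H})$ with
\[
\sum_{\substack{L \subset V(\mathcal{H}) \\ |L| \ge 2}} p^{-|L|} \bigg( \sum_{L \subset E \in \mathcal{H}} \mu(E) \bigg)^{2} < \frac{1}{R}.
\]
This is exactly the correlation term from Janson's inequality, and it is chosen for two reasons: it makes the probability that a set of $|U|/4r$ colour-$i$ cross edges misses \emph{every} member of the family at most $2^{-\Omega(|U|)}$, and it is a property that the extension step can itself certify for the new family of copies of $H_i$, which is what re-enters the induction. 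A max-degree cap gives neither: clustering on larger link sets $L$ is not controlled, and the property is not self-reproducing. Relatedly, the container argument is genuinely two-layered --- the Campos--Samotij lemma is applied to the hypergraph whose edges are the sets of vertices inducing a $(p,R)$-Janson subfamily, and the resulting container hypergraphs (which record local obstructions) are then handled by a second, classical container lemma --- whereas your description of ``converting a global condition to local constraints'' gestures at this but does not identify the hypergraph on which the container lemma must act.
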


The bound on the probability in Theorem~\ref{thm:induced:multi:Gnp} is also close to best possible, since if $G(n,1/2)$ has chromatic number less than $R_r(k)$ then its edges can be $r$-coloured without creating a monochromatic copy of $K_k$, and this occurs with probability at least $2^{-n^2/R_r(k)}$. 

We will next attempt to give a high-level overview of the (extremely complicated) proof of Theorem~\ref{thm:induced:multi:Gnp}. To set the scene, consider the following naive attempt to find a copy\footnote{To avoid repetition, we will write ``copy of $H$" to mean ``induced monochromatic copy of $H$".} of $H$ using an Erd\H{o}s--Szekeres-type algorithm: 
apply the induction hypothesis inside a set $U \subset V(G)$ to find a copy of $H - v$ (the graph obtained from $H$ by removing a vertex $v$), and then attempt to use the edges between $U$ and $V(G) \setminus U$ to extend it to a copy of $H$. 

The reader will perhaps already have noticed a number of potential problems with this approach. Most obviously, if we only find one copy of $H - v$ (in red, say) then we can easily avoid extending it to a red copy of $H$, simply by not using the colour red for any of the edges between $U$ and $V(G) \setminus U$. Dealing with this problem is easy, however: if we generalise to the off-diagonal setting (in which we aim to find a copy of $H_i$ in colour $i$), then we can use the induction hypothesis to find a colour $i$ copy of $H_i - v$ in $U$ for each $i \in [r]$.

A seemingly more catastrophic problem is that the enemy is allowed to colour the edges inside $U$ after seeing \emph{all} of the edges of $G \sim G(n,1/2)$, including those outside $U$. In particular, this means that the colouring of the edges inside $U$ will affect (perhaps significantly) the distribution of the remaining edges. In order to deal with this problem, we are forced take a union bound over the roughly $r^{|U|^2}$ choices of the colouring inside $U$. To reduce the pain of this union bound, we would like to take $U$ as small as possible; for the induction hypothesis to apply, however, we cannot take it to be smaller than $r^{-Cr} n$. 

We are now left with the task of showing that for each choice of the colouring inside $U$, the probability that we fail to extend to a copy of $H$ is smaller than $r^{-|U|^2}$. But this seems hopeless: the probability that there are \emph{zero} edges between a copy of $H - v$ and $V(G) \setminus U$ is at least $2^{-kn}$, which is already much too large, and the probability that it fails to extend to a (not necessarily monochromatic) copy of $H$ is even larger: roughly $(1 - 2^{-k})^{n}$. 

This suggests that we need to strengthen the induction hypothesis so that, instead of a single copy, we find \emph{many} copies of $H_i - v$ in $U$ for each colour $i$. In fact, even this turns out not to be enough: these copies must also be sufficiently `well-distributed' (for example, the copies should not all intersect a subset of $U$ of size $o(n)$, since a set of this size has no neighbours outside $U$ with probability $2^{-o(n^2)}$). To make this precise, Aragão, Campos, Dahia, Filipe and Marciano introduced the following key definition.

\begin{defn}[$(p,R)$-Janson hypergraphs]
We say that a hypergraph $\cH$ is $(p,R)$-Janson if there exists a probability measure $\mu$ supported on the edges of $\cH$ such that
$$\sum_{\substack{L \hspace{0.01cm} \subset \hspace{0.01cm} V(\cH) \\ |L| \hspace{0.01cm} \ge \hspace{0.01cm} 2}} p^{-|L|} \bigg( \sum_{L \hspace{0.01cm}\subset \hspace{0.01cm} E \hspace{0.01cm} \in \hspace{0.01cm} \cH} \mu(E) \bigg)^2 < \frac{1}{R}.$$
\end{defn}

They apply this definition to the hypergraph $\cH$ with vertex set $U$ and edge set 
$$\big\{ S \subset U : G[S] \text{ is a copy of $H_i - v$ in colour $i$} \big\},$$
and the induction hypothesis tells us that this hypergraph is $(p,p|U|)$-Janson for some $p$ (a polynomial function of $k$ and $r$). We now want to prove the following lemma, which is a simplified (and slightly imprecise) version of~\cite[Lemma~3.1]{ACDFM}.

\begin{lemma}\label{lem:simple:induced:key}
If $\cH$ is $(p,p|U|)$-Janson, then the probability that there exists a set of $|U|/4r$ edges between $u$ and $U$ that extend no edge of $\cH$ to a copy of $H_i$ is at most $2^{-\Omega(|U|)}$. 
\end{lemma}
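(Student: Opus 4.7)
The plan is to handle each candidate set of edges by a Janson-type inequality and then to take a union bound over $F$. Fix a vertex $u \in V(G) \setminus U$, and recall that each $S \in \cH$ is equipped with an isomorphism from $H_i - v$ to a colour-$i$ induced copy in $G[S]$, which prescribes a distinguished subset $D_S \subset S$ of size $\deg_{H_i}(v)$: these are the vertices that must coincide with $N_G(u) \cap S$ if $S \cup \{u\}$ is to be an induced copy of $H_i$ with $u$ playing the role of $v$. For a candidate $F \subset \pigl\{uw : w \in U\pigr\}$ with $|F| = |U|/(4r)$ — which we think of as the adversary's tentative set of colour-$i$ edges from $u$ to $U$ — define the extension event
\[
B_S(F) \;=\; \digl\{\, N_G(u) \cap S = D_S \,\digr\} \,\cap\, \digl\{\, D_S \subset F \,\digr\}.
\]
Conditional on $G[U]$ (which determines $\cH$), the first conjunct depends only on the $k-1$ independent $G$-indicators $uw$ with $w \in S$, so $\Pr[B_S(F)] = 2^{-(k-1)}\,\1[D_S \subset F]$. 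We aim to show that with probability $1 - 2^{-\Omega(|U|)}$ (over $G$), for every such $F$ some $B_S(F)$ holds.

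The second step is a weighted Janson inequality for $\pigl\{B_S(F) : S \in \cH\pigr\}$, with weights given by the measure $\mu$ from the $(p,p|U|)$-Janson condition, and with $p$ chosen comparable to $1/r$ to match the density of $F$. For fixed $F$, set $X = \sum_S \mu(S)\,\1[B_S(F)]$, so
\[
\Ex[X] \,=\, 2^{-(k-1)}\!\!\!\sum_{S \,:\, D_S \subset F}\!\!\! \mu(S), \qquad \Ex[X^2] - \Ex[X]^2 \,\le\, 2^{-2(k-1)}\!\!\sum_{|L| \,\ge\, 2}\!\! p^{-|L|}\!\left(\sum_{L \subset E \in \cH}\!\!\mu(E)\right)^{\!2}\!\!,
\]
where the second bound comes from grouping pairs $(S,S')$ according to $L = D_S \cap D_{S'}$ and using that conditional on $D_S \cup D_{S'} \subset F$ the residual $G$-indicators for $uw$ factor over $(S \cup S') \setminus L$, each contributing a factor $p$ from the prescribed adversarial density on $L$. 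The $(p,p|U|)$-Janson condition forces the right-hand sum to be at most $1/(p|U|)$. For $F$'s that are not atypical — in the precise sense that $\sum_{D_S \subset F}\mu(S) = \Omega(1)$ — a standard Janson/second-moment manipulation then yields $\Pr[X=0] \le \exp\pigl(-\Omega(p|U|)\pigr)$. The set of atypical $F$'s can be handled separately, by a direct counting argument showing that few $F$'s of size $|U|/(4r)$ can miss the $\mu$-mass of the distinguished sets, again using the Janson-hypergraph degree estimate.

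Finally, I would take a union bound over the $\binom{|U|}{|U|/(4r)} \le 2^{O(|U|(\log r)/r)}$ choices of $F$; provided the implicit constant in $\Omega(p|U|)$ comfortably exceeds the union-bound rate (which it does for $p$ a sufficiently small polynomial in $1/r$), the desired estimate $2^{-\Omega(|U|)}$ follows. The main obstacle I expect is the second-moment computation in the middle paragraph: one must group pairs $(S,S')$ by $L = D_S \cap D_{S'}$ and verify that the factor $p^{-|L|}$ arising from the adversary's latitude in placing $F$ on $L$ aligns exactly with the weight appearing in the $(p,p|U|)$-Janson definition — this is the only point at which the "$p|U|$" normalisation is used in a genuinely non-cosmetic way. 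A secondary difficulty is the bookkeeping required to isolate and control the atypical $F$'s for which $\Ex[X]$ is too small, since this step must also use only the Janson-hypergraph hypothesis (no extra regularity of $\cH$ can be assumed), and it is here that the precise strength of the induction hypothesis on $\cH$ will matter.
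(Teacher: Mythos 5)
Your per-$F$ analysis is sound, but the union bound over $F$ is the step that breaks down, and this is precisely where the paper's approach differs: the paper proves Lemma~\ref{lem:simple:induced:key} via the Campos--Samotij efficient hypergraph container lemma (Lemma~\ref{lem:CS}), not via a direct Janson inequality plus a union bound over all candidate sets $F$. You correctly observe that there are $\binom{|U|}{|U|/(4r)} = \exp\big(\Theta(|U|(\log r)/r)\big)$ choices of $F$, and that the Janson/second-moment bound gives a per-$F$ failure probability of at most $\exp\big(-\Omega(p|U|)\big)$. But these two quantities do not balance: the parameter $p$ is dictated by the $(p,p|U|)$-Janson hypothesis carried by the induction (a polynomial in $1/(kr)$, so in particular $p = O(1/r)$), and cannot be enlarged at will, so the union bound loses by a factor of $\log r$ in the exponent and the total failure probability does not come out as $2^{-\Omega(|U|)}$. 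Your parenthetical remark that the implicit constant in $\Omega(p|U|)$ "comfortably exceeds the union-bound rate \dots\ for $p$ a sufficiently small polynomial in $1/r$" is backwards: decreasing $p$ \emph{weakens} the Janson tail. This $\log$-factor inefficiency relative to a bare union bound is exactly what the container method is designed to remove: one covers the bad $F$'s by a small family of fingerprints $T$, and inside each container hypergraph $\cG_T$ proves a supersaturation statement using property~$(c)$ of Lemma~\ref{lem:CS}, which says that conditioning on independence in $\cG_T$ barely distorts the product measure.

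A secondary point: the statement actually used in the proof of Theorem~\ref{thm:induced:multi:Gnp} is strictly stronger than Lemma~\ref{lem:simple:induced:key} as you have tried to prove it. As sketched in the paragraph following the lemma (and in~\cite[Lemma~5.1]{ACDFM}), one must produce a $(p,R+1)$-Janson \emph{family} of extensions, not merely a single extending edge of $\cH$; a bare Janson lower-tail bound only guarantees one good copy and cannot feed the induction. Your building blocks --- the extension event $B_S(F)$, the $\mu$-weighting, and grouping pairs $(S,S')$ by $L = D_S \cap D_{S'}$ --- are the right ones and do reappear inside the supersaturation step of the container argument; the missing idea is the container step itself.
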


Here we think of the $|U|/4r$ edges as being colour $i$, and we are trying to extend to an induced copy of $H_i$ in colour $i$, so the neighbourhood of $u$ in an edge of $\cH$ must exactly match that of the vertex $v$ in $H_i$, and all of the edges must have colour $i$.  

Aragão, Campos, Dahia, Filipe and Marciano proved Lemma~\ref{lem:simple:induced:key} using the method of hypergraph containers, which is a generalisation of the method of graph containers (see Section~\ref{sec:R4k}, where we used graph containers to prove a lower bound on $R(4,k)$). We refer the reader to the survey~\cite{ICM} for background on hypergraph containers. More precisely, they used an `efficient' container lemma of Campos and Samotij~\cite{CS}, which gives much better dependence on the uniformity of the hypergraph than the original container lemmas from~\cite{BMS,ST}. The first efficient container lemma was developed by Balogh and Samotij~\cite{BS}, who used it (in a much simpler way) to give a new proof of the bound~\eqref{eq:FS:BS}.  

Unfortunately, however, Lemma~\ref{lem:simple:induced:key} is not strong enough for our purposes, since we now need not only one copy of $H_i$, but a $(p,pn)$-Janson collection of copies! The actual lemma we need (see~\cite[Lemma~5.1]{ACDFM}) is roughly as follows. Suppose that $\cH$ is $(p,p|U|)$-Janson, and that we have already constructed a $(p,R)$-Janson family of copies of $H_i$ in colour $i$. Then the probability that there is a set of $|U|/4r$ edges between $u$ and $U$ that does not extend this collection to a $(p,R+1)$-Janson family of copies of $H_i$ is at most $2^{-\Omega(|U|)}$.

The proof of this lemma is the most difficult and novel part of the proof of Theorem~\ref{thm:induced:multi:Gnp}, and involves an exciting new generalisation of the method of hypergraph containers. In order to motivate this approach, let us briefly recall the classical hypergraph container method, as introduced in~\cite{BMS,ST} and then strengthened in~\cite{BS}. Roughly speaking, given a $k$-uniform hypergraph $\cH$ whose edges are reasonably `uniformly' distributed, the container method provides a (not too large) family $\cC$ of `almost independent' sets (meaning that they contain at most $\eps \cdot e(\cH)$ edges of $\cH$) that cover the independent sets of $\cH$. The size of the family $\cC$ depends on how uniformly the edges are distributed, and also on $\eps$, and on the uniformity $k$. The power of this lemma comes from the fact that we can now take a union bound over the `containers' $C \in \cC$, and deal with each container using a suitable supersaturation theorem.  

To be more precise, let $\cH$ be a $k$-uniform hypergraph with $n$ vertices, and let $\Delta_\ell(\cH)$ denote the maximum over $\ell$-sets $L$ of the number of edges of $\cH$ that contain $L$. If 
$$\Delta_\ell(\cH) = O\bigg( \tau^{\ell-1} \cdot \frac{e(\cH)}{n} \bigg)$$
for every $1 \le \ell \le k$, then there exists a family of `containers' $\cC$, with
$$|\cC| \le \exp\sigl( \hspace{0.02cm} K(k,\eps) \cdot \tau n \log n \sigr),$$
such that every independent set $I \in \cI(\cH)$ is a subset of some container $C \in \cC$, and each $C \in \cC$ contains at most $\eps \cdot e(\cH)$ edges of $\cH$. To prove this statement, we use a deterministic algorithm to find, inside each independent set $I \in \cI(\cH)$, a small `fingerprint' $f(I)$ with the property that the container of $I$ is determined by $f(I)$. 

The original container theorem~\cite{BMS,ST} gave a function $K$ with an optimal dependence on $\eps$, but a fairly poor (super-exponential) dependence on $k$. The efficient container lemma of Balogh and Samotij~\cite{BS} reduced this to a polynomial dependence, and Campos and Samotij~\cite{CS} gave two simple and elegant proofs of this statement, together with several generalisations. In particular, they proved the following container lemma, which plays a crucial role in the proof of Theorem~\ref{thm:induced:multi:Gnp}. 

\begin{lemma}[Campos and Samotij, 2024+]\label{lem:CS}
Let $\cH$ be a hypergraph with $n$ vertices, and let $0 < p \le \delta < 1$. There exists a family $\cT$ of subsets of $V(\cH)$, and a function $f \colon \cI(\cH) \to \cT$, 
such that the following hold:
\begin{itemize}
\item[$(a)$] $f(I) \subset I$ for every $I \in \cI(\cH)$.\smallskip
\item[$(b)$] $|T| \le pn/\delta$ for every $T \in \cT$.\smallskip
\item[$(c)$] For each $T \in \cT$, there is a hypergraph $\cG_T$ with vertex set $V(\cH) \setminus T$ that covers $\cH$, and satisfies
\begin{equation}\label{eq:CS:prop}
\Pr\big( S \subset V_q \;\pig|\; V_q \in \cI\pigl( \cG_T \pigr) \big) > \pigl( 1 - \delta \pigr)^{|S|} q^{|S|}
\end{equation}
for all $S \not\in \cG_T$. Moreover, $I \in \cI(\cG_T)$ for every $I \in \cI(\cH)$ such that $f(I) = T$. 
\end{itemize}
\end{lemma}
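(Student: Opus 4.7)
The plan is to construct $f$ and the auxiliary hypergraphs $\cG_T$ simultaneously via a single deterministic algorithm applied to each $I \in \cI(\cH)$, following the efficient container template of Kleitman--Winston, Sapozhenko, Balogh--Samotij and Campos--Samotij. I would initialise $T = \emptyset$ and $\cG = \cH$; at each step, let $v^\star$ be the vertex of $V(\cH) \setminus T$ of largest weight
$$w_\cG(v) \;=\; \sum_{E \in \cG,\, v \in E} q^{|E|-1},$$
with ties broken by a rule depending only on $\cG$. If $v^\star \in I$, I would append $v^\star$ to $T$ and delete every edge of $\cG$ through $v^\star$; if $v^\star \notin I$, I would delete $v^\star$ from $V(\cG)$ and replace each edge $E \ni v^\star$ by its link $E \setminus \{v^\star\}$, preserving the invariant that $T \cup \cG$ covers $\cH$. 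I halt once $|T| = \lfloor pn/\delta \rfloor$, or once $w_\cG(v) \le \delta$ for every remaining $v$, and then set $f(I) = T$ and $\cG_T = \cG$.

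Condition $(a)$ is immediate, $(b)$ follows from the halting rule, and the ``moreover" clause of $(c)$ follows from the covering invariant: since every edge of $\cH$ meets $T$ or contains (as a subset) an edge of $\cG_T$, and $I \cap T$ is precisely the portion of $I$ promoted to $T$, the remainder $I \setminus T$ must avoid every edge of $\cG_T$. The bound $|T| \le pn/\delta$ requires a potential argument: the global weight $\sum_v w_\cG(v)$ starts at most a multiple of $n$, and each fingerprint step deletes at least weight $\delta$ (by the greedy choice of $v^\star$ and the non-halt condition), which with proper calibration limits the number of fingerprint additions to $pn/\delta$.

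The main obstacle is the probabilistic inequality in $(c)$. My plan is to reduce it to the single-vertex case via the chain rule
$$\Pr\bigl(S \subset V_q \,\bigm|\, V_q \in \cI(\cG_T)\bigr) \;=\; \prod_{i=1}^{|S|} \Pr\bigl(v_i \in V_q \,\bigm|\, \{v_1,\dots,v_{i-1}\} \subset V_q,\, V_q \in \cI(\cG_T)\bigr),$$
observing that conditioning on $\{v_1,\dots,v_{i-1}\} \subset V_q$ is equivalent to passing to the link hypergraph of $\{v_1,\dots,v_{i-1}\}$ in $\cG_T$, in which all relevant weights can only decrease. For a single vertex $v$ in a hypergraph $\cH'$, Bayes' rule gives
$$\Pr\bigl(v \in V_q \,\bigm|\, V_q \in \cI(\cH')\bigr) \;=\; q \cdot \frac{\Pr\bigl(V_q \in \cI(\cH') \,\bigm|\, v \in V_q\bigr)}{\Pr\bigl(V_q \in \cI(\cH')\bigr)},$$
and the ratio on the right is at least $1 - \delta$ by a short Janson/union-bound calculation: conditioning on $v \in V_q$ only imposes the additional constraints $V_q \not\supset E \setminus \{v\}$ for edges $E \ni v$, whose total failure probability is bounded by $w_{\cH'}(v) \le \delta$.

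The most delicate point, and the technical heart of the Campos--Samotij argument, is calibrating the weight function $w_\cG$ so that the link operation in the $v^\star \notin I$ step does not uncontrollably inflate the weights of the surviving vertices: shrinking an edge $E$ to $E \setminus \{v^\star\}$ multiplies its weight-contribution by the factor $q^{-1}$, which could a priori spoil both the global potential argument and the per-step inequality in the chain rule. Resolving this would involve either modifying $w$ to decay faster in $|E|$ (for example, using $(cq)^{|E|}$ for a suitable $c < 1$), or introducing an auxiliary parameter that absorbs the inflation while keeping both the potential decrease and the single-vertex bound intact. This calibration is precisely what separates the efficient container lemma from its classical graph-theoretic predecessors, and verifying that it closes the argument in the hypergraph setting is the genuinely hard step.
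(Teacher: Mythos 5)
The paper does not prove this lemma: it is quoted verbatim from Campos and Samotij \cite{CS}, with only an informal gloss of what property~$(c)$ means. There is therefore no ``paper proof'' to compare against, so what follows is an assessment of your proposal on its own terms.

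Your overall template (greedy fingerprinting by a $q$-weighted degree, halting on a weight threshold or a cardinality cap) is a reasonable point of departure, but there is a concrete structural error in the update rule. You perform the link step --- replacing each $E \ni v^\star$ by $E \setminus \{v^\star\}$ --- in the branch $v^\star \notin I$, and you then argue that the ``moreover'' clause $I \in \cI(\cG_T)$ follows from the covering invariant. It does not. If $v^\star \notin I$ and $E \ni v^\star$ is an edge of $\cG$, then $E \not\subset I$ is automatic, but $E \setminus \{v^\star\} \subset I$ is entirely possible, so the linked hypergraph can acquire edges lying inside $I$ and $I$ need not be independent in $\cG_T$. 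The covering statement (every edge of $\cH$ meets $T$ or contains an edge of $\cG_T$) concerns edges of $\cH$ and says nothing to rule this out. The implication runs the other way around: the link $E \setminus \{v^\star\}$ is a legitimate constraint on $I$ precisely when $v^\star \in I$, since then $E \not\subset I$ forces $E \setminus \{v^\star\} \not\subset I$. So the two branches of your algorithm should be swapped --- link at fingerprint vertices, discard at non-fingerprint vertices --- after which the covering and cardinality arguments have to be redone from scratch, which is where the real work in \cite{CS} lies.

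On the quantitative side, your potential argument for $(b)$ does not close either. You assert that $\sum_v w_\cG(v)$ starts at most a constant multiple of $n$, but for an arbitrary hypergraph
$\sum_v w_\cG(v) = \sum_{E \in \cH} |E|\, q^{|E|-1}$,
and the lemma makes no density or degree assumption on $\cH$ at all, so there is no such a~priori bound; the cap $|T| \le pn/\delta$ must emerge from the conditional-probability stopping rule in $(c)$ rather than from an absolute weight budget. Finally, you candidly flag the calibration of the weight and the per-step inequality in the chain-rule reduction as unresolved, and I agree that this is the genuine core of the argument; but as written the two issues above are independent of that calibration and would need to be repaired first.
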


Note in particular that in Lemma~\ref{lem:CS} we do not need to assume anything at all about the edges of the hypergraph! The (confusing, but extremely useful) property $(c)$ says that the upset generated by $\cG_T$ contains $\cH$, but does not contain any $I \in \cI(\cH)$ such that $f(I) = T$, and that for every set $S \subset V(\cH)$ that is not in $\cG_T$, conditioning a $q$-random set $V_q \subset V(\cH)$ to be independent in $\cG_T$ barely affects the probability that $S$ is contained in $V_q$. 
  
Aragão, Campos, Dahia, Filipe and Marciano applied this lemma to the (highly non-uniform) hypergraph that encodes sets of vertices that induce $(p,R)$-Janson hypergraphs. This allows them to cover the non-$(p,R)$-Janson sets by the independent sets of the `container hypergraphs' $\cG_T$, which encode all of the local obstructions. They then use another (more classical) hypergraph container lemma to study the independent sets of each container hypergraph. This approach seems to be very general and powerful, and we expect to see it used in several further breakthroughs over the coming years.

\end{document}